\newtheorem{theorem}{Theorem}
\newtheorem{proposition}[theorem]{Proposition}
\newtheorem{lemma}[theorem]{Lemma}
\newtheorem{corollary}[theorem]{Corollary}
\newtheorem{claim}{Claim}
\newtheorem{example}{Example}
\newcommand{\sm}{\setminus}
\newcommand{\N}{\mathbb{N}}
\newcommand{\cFkk}{\mathcal{F}_{k+1}}
\newcommand{\cFkkk}{\mathcal{F}_{k+2}}
\newcommand{\cFk}{\mathcal{F}_{k}}
\newcommand{\eps}{\varepsilon}
\newcommand{\Pro}{\mathbb{P}}
\newcommand{\Exp}{\mathbb{E}}
\newcommand{\cA}{\mathcal{A}}
\newcommand{\cD}{\mathcal{D}}
\newcommand{\cG}{\mathcal{G}}
\newcommand{\cF}{\mathcal{F}}
\newcommand{\cE}{\mathcal{E}}
\newcommand{\cH}{\mathcal{H}}
\newcommand{\Td}{\Sigma^{\mathcal{D}}}
\newcommand{\ind}[1]{\mathbf{1}_{\{#1\}}}
\newcommand{\fD}{\mathfrak{D}}
\newcommand{\fS}{\mathfrak{S}}
\newcommand{\bP}{\mathbb{P}}
\newcommand{\bE}{\mathbb{E}}
\newcommand{\bN}{\mathbb{N}}
\newcommand{\Bin}{\text{Bin}}
\newcommand{\avd}{\bar{d}}
\newcommand{\cri}{{crit}}
\newcommand{\new}[1]{\textcolor{red}{#1}}
\title{Percolation on random graphs with a fixed degree sequence}
\author{Nikolaos Fountoulakis, Felix Joos
~and Guillem Perarnau}
\date{}
\thanks{Felix Joos was supported by the EPSRC, grant no. EP/M009408/1.}
\thanks {Guillem Perarnau was supported by Ministerio de Econom\'ia y Competitividad grant MTM2017-82166-P and the  Ministerio de Ciencia e Innovaci\'on grant PID2020-113082GB-I00.}
\begin{document}
\maketitle

\begin{abstract}
We consider bond percolation on random graphs with given degrees and bounded average degree. In particular, we consider the order of the largest component 
after  the random deletion of the edges of such a random graph. We give a rough characterisation of those degree 
distributions for which bond percolation with high probability leaves a component of linear order, known usually as a \emph{giant component}. We show that 
essentially the critical condition has to do with the tail of the degree distribution.  
Our proof makes use of recent technique 
which is based on \emph{the switching method} and avoids the use of the classic configuration model on degree sequences that have a limiting distribution.  
Thus our results hold for sparse degree sequences without the usual restrictions that accompany the configuration model.
\medskip

\noindent
\texttt{keywords}: random graphs with given degrees, bond percolation, giant component, power law

\noindent
\emph{2010 AMS Subj. Class.}: 05C80, 05C82
\end{abstract}


\section{Introduction}

Random graphs with a given degree sequence have become an integral part of the theory of random graphs. 
Let $n\geq 2$ and let $\cD=(d_1,\dots,d_n)$ be a degree sequence of length $n$; 
that is, a vector of non-negative integers which represent the degrees of the set of vertices $[n]:=\{1,\ldots, n\}$. 
In other words, vertex $i$ has degree $d_i$ for each $i\in [n]$. 
Without loss of generality, we assume that $d_1\leq \ldots \leq d_n$. In fact, our results 
deal with properties that are closed under automorphisms and remain valid when relabelling the vertex set.
If not stated otherwise, we will assume that $d_1\geq 1$. The results for degree sequences containing vertices of degree $0$ can be easily deduced from the analysis of degree sequences without them.
We will also assume that $\cD$ is feasible; that is, there exists at least one graph with degree sequence $\cD$.  
The main object of our study is $G^\cD$, which is a graph chosen uniformly at random among all \emph{simple} graphs on $[n]$ having degree sequence $\cD$.

Random graphs with a given degree distribution appear also in the context of graph enumeration. 
Bender and Canfield~\cite{BenCan}, 
as well as Bollob\'as~\cite{BolEnum} and Wormald~\cite{Worm78}, came up with the now well-known \emph{configuration model}, 
which has become a standard tool in the analysis of random graphs that are sampled uniformly from the set of all simple
graphs with a given degree sequence. 
However, the study of such random graphs through the configuration model has some limitations, as it often requires bounds on the growth of the maximum degree of the degree sequence. Typically, these are implicitly imposed by bounds on the second (or higher) moment of the degree sequence.

In 1995, Molloy and Reed~\cite{molloy1995critical} investigated the component structure of 
$G^\cD$ and, more specifically, the emergence of the \emph{giant component} (a component containing at least a constant fraction of the vertices).
This is one of the central questions in the theory of random graphs.
They provided a condition on $\cD$ that characterises the emergence of a giant component in $G^\cD$ given that $\cD$ satisfies a number of 
technical conditions. 
This result 
has been widely applied to the analysis of a variety of complex networks~\cite{aiello2000random, albert2002statistical, boccaletti2006complex, newman2003structure} and there are several refinements of it~\cite{bollobas2015old, hatami2012scaling, janson2009new,  kang2008critical, molloy1998size}. 
The technical restrictions on $\cD$ in~\cite{molloy1995critical} result from the use of the configuration model.
These restrictions have been weakened in subsequent papers~\cite{bollobas2015old, janson2009new}. 
Recently, Joos, Perarnau, Rautenbach, and Reed~\cite{joos2016how} managed to completely remove all restrictions on $\cD$ by using an analysis based on the switching method. This provides a new criterion for the existence of a giant component in $G^\cD$ that can be applied to every degree sequence.

In this paper, we follow this novel approach and consider the~\emph{component sizes} of a random graph with a given degree sequence under the random deletion of its edges. 
For a graph $G$ and a real number  $p\in [0,1]$, 
we denote by $G_p$ the random subgraph of $G$ in which every edge of $G$ is retained independently with probability $p$.  This is commonly known as \emph{bond percolation} on $G$. 
The theme of this paper is the component structure of $G^\cD_p$.  
Since  $G^\cD$ itself is a random graph, 
$G^\cD_p$ should be understood as follows: 
first, we choose a graph $G^\cD$ uniformly at random from the set of all simple graphs with degree sequence $\cD$ 
and thereafter each edge of $G^\cD$ is retained independently with probability $p$. 

The structure of $G^\cD$ 
has been studied in great detail for the case $d_i=d$ for all $i\in [n]$ and some $d\in \N$
(in this case we also write $G(n,d)$ for $G^\cD$).
The bond percolation of $G(n,d)$ was first studied by Goerdt~\cite{goerdt2001giant}. 
He proved that there exists a critical value $p_\cri=1/(d-1)$ such that 
the existence of a giant component depends on whether $p<p_\cri$ or $p>p_\cri$. 
Bond percolation of $G(n,d)$ near the critical probability $p_\cri$ has been extensively studied~\cite{nachmias2010critical, pittel2008edge, riordan2012phase}.
The first author~\cite{fountoulakis2007percolation} and Janson~\cite{janson2008percolation} considered the bond percolation of $G^\cD$, proving the existence of a critical probability, provided that $\cD$ satisfies some technical conditions, similar to those required in~\cite{molloy1995critical}. 
These results have been extended to a more general setting by Bollob\'as and Riordan~\cite{bollobas2015old}. 


In the present work, we determine those conditions on $\cD$ which ensure that $p_\cri$ is bounded away from 0.    
We consider arbitrary degree sequences without restrictions as in~\cite{bollobas2015old,fountoulakis2007percolation,janson2008percolation,molloy1995critical} and we only insist that the total number of edges grows linearly with the number of vertices $n$. 
We call those sequences \emph{sparse}. 
(We briefly discuss the non-sparse case at the end of the paper.)
Besides the mathematical motivation, 
sparse graphs are also the main focus in the theory of complex networks as this is a property that is observed in 
several networks that arise in applications~\cite{albert2002statistical}.



Consider a sequence of degree sequences $\fD= (\cD_n)_{n\geq 2}$, where $\cD_n=(d_1^{(n)}, \dots,d_n^{(n)})$. 
Let $D_n$ be the random variable that is the degree in $\cD_n$ of a vertex selected uniformly at random. 
For $c\in\N$, we define $W(c,\cD) := \{ i \ : \  d_i \geq c \}$; that is, $W(c,\cD)$ is the set of vertices of degree at least $c$ and  set $W_n(c):=W(c,\cD_n)$.
The sequence $(D_n)_{n\geq 2}$ is \emph{uniformly integrable} if for every $\eps>0$, there exist $c,n_0$ such that for every $n\geq n_0$, we have
\begin{align}\label{eq:UI_seq}
\sum_{i\in W_n(c)} d_i^{(n)} \leq \eps n,
\end{align}
and \emph{strongly uniformly integrable} if for every $\eps>0$, there exists $c_0$ such that for any $c\geq c_0$ there exists $n_0$ such that for every $n\geq n_0$, we have
\begin{align}\label{eq:SUI_seq}
\sum_{i\in W_n(c)} d_i^{(n)} \leq \frac{\eps}{c}\cdot n.
\end{align}
Strong uniform integrability can be seen as a weaker version of  bounded second moment conditions.
We say that $(D_n)_{n\geq 2}$ \emph{robustly fails the strong uniform integrability assumption} if there exists a function $f : \mathbb{R} \to \mathbb{R}$, such that $f (x)\to \infty$ as $x\to \infty$, and a $c_0>0$ with the property
that for any $c\geq c_0$ there exists $n_0$ such that for all $n\geq n_0$ we have 
\begin{align}\label{eq:RFSUI_seq}
\sum_{i\in W_n(c)} d_i^{(n)} > \frac{f(c)}{c}\cdot n.
\end{align}

For a graph $G$, we denote by $L_1(G)$ the number of vertices in the largest component (ties are  resolved following the lexicographic ordering of the vertices).

We now state our main result in the context of sequences of degree sequences that satisfy a \emph{mild} convergence condition. 
\begin{theorem}\label{thm:seq}
Suppose that $\avd \geq 1$ and let $\fD=(\cD_n)_{n\geq 2}$ be a sequence of degree sequences such that for all $n \geq 2$ the average degree of $\cD_n$ is at most $\avd$.
Suppose that 
$$ 
d=d(\fD):=\sup_{c\geq 1} \lim_{n \to \infty} \max\left\{\frac{\sum_{i \in V \sm W_n(c)} d_i^{(n)} (d_i^{(n)}-1)}{\sum_{i \in V \sm W_n(c)} d_i^{(n)}} ,1\right\}
$$
exists.
Let $D_n$ be the degree in $\cD_n$ of a vertex chosen uniformly at random.
Then
\begin{enumerate}[label=(\roman*)]
\item \label{item:1} if the sequence $(D_n)_{n \geq 2}$ is strongly uniformly integrable and $d<\infty$, then for every $\epsilon>0$ the following hold:
\begin{itemize}
\item[-] if $0\leq p\leq (1-\epsilon)\frac{1}{d}$, then 
$$
\Pro[L_1(G_p^{\cD_n})=o(n)]=1-o(1)\;;
$$
\item[-] if $(1+\epsilon)\frac{1}{d}\leq p\leq 1$, then there exists $\rho=\rho(\epsilon)$ such that 
$$
\Pro[L_1(G_p^{\cD_n})>\rho n]=1-o(1)\;.
$$
\end{itemize}
\item\label{item:2} if either $d=\infty$ or the sequence robustly fails the strong uniform integrability assumption, then for all $p,\delta \in (0,1)$, there exist $\rho>0$ and $n_0$ such that for all $n\geq n_0$, we have
 $$
 \Pro[L_1(G_p^{\cD_n})>\rho n]\geq 1-\delta\;.
 $$
\end{enumerate}
\end{theorem}
The proof of this theorem relies on a dichotomy within the class of all sparse degree sequences $\cD$ of length $n$.
This dichotomy is expressed through Theorems~\ref{thm:thres} and~\ref{thm:rob} below, which are much stronger, albeit somewhat technical, results.
Roughly speaking, if the tail of the degree sequence $\cD$ is sufficiently thin (conditions $A_1$ and $A_2$ below are satisfied), then there exists a critical probability $p_\cri$ bounded away from 0 
(essentially determined by $\cD$) such that when $p$ crosses $p_\cri$ the fraction of vertices that belong to
the largest component undergoes a rapid increase with high probability. 
On the other hand, if the tail is sufficiently heavy (either $A_1$ or $A_2$ are not satisfied), then for every $p\in (0,1]$, there is a giant component with high probability.

Let us now make these statements precise. 
For every $x>0$ and every $c\in \N$, 
we say that $\cD$ satisfies $A_1(x,c)$ if 
\begin{align} \label{eq:A_1}
\sum_{i\in W(c,\cD)} d_i&\leq \frac{x}{c}\cdot n \;.
\end{align}
For all $x>0$ and  $c_1,c_2\in \N$, 
we say that $\cD$ satisfies $A_2(x,c_1,c_2)$ if
\begin{align} \label{eq:A_2}
\sum_{i\in W(c_1,\cD)\sm W(c_2,\cD)}d_i^2 &\leq \frac{x}{4}\cdot n\;.
\end{align}

Note that being strongly uniformly integrable is equivalent to satisfying condition $A_1(\eps,c_{0})$ for every $\eps>0$ and $c_0= c_0(\eps)$. Also observe that these integrability notions naturally extend to $D_n$, even if the sequence $D_n$ does not converge in distribution.

The first part of this dichotomy describes which degree sequences have a percolation threshold.
\begin{theorem}\label{thm:thres}
For all $\epsilon, \gamma\in (0,1)$, all $c_1,c_2\in \mathbb{N}$ and $ \avd\geq 1$,
there exist $\rho=\rho(\epsilon,c_1)$, $\eta=\eta(\gamma,\epsilon,c_1)$ and $n_0$ such that 
for every $n\geq n_0$ and every degree sequence $\cD=(d_1,\dots,d_n)$ with average degree at most $\avd$ that satisfies $A_1(\eta,c_2)$, then for 
\begin{align}\label{eq:crit_prob}
p_\cri:=p_\cri (c_2,\cD) = \min \left\{ \frac{\sum_{i \in V\setminus W(c_2)} d_i}{\sum_{i \in V\setminus W(c_2)} d_i (d_i-1)},1\right\}
\end{align} 
we have
\begin{enumerate}[label=(\roman*)]
\item if $0\leq p\leq (1-\epsilon)p_\cri$, then
\begin{align*}
\Pro[L_1(G^\cD_p)>\gamma n]= o(1)\; ;
\end{align*}
\item if $\cD$ satisfies $A_2(\epsilon,c_1,c_2)$ and $(1+\epsilon)p_\cri\leq p \leq 1$, then
\begin{align*}
\Pro[L_1(G^\cD_p)>\rho n]= 1-o(1)\;.
\end{align*}
\end{enumerate}
\end{theorem}

In order to obtain a meaningful statement we need to apply Theorem~\ref{thm:thres} as follows: first, choose $\eps$ (width of the transition window) and $c_1,c_2,\avd$. This fixes the value of $p_\cri$ and $\rho$. Now, choose $\gamma$ which might be arbitrarily smaller than $\rho$. This fixes $\eta$ and $n_0$. After these choices, the theorem then gives a sufficient criterion for degree sequences whose size of the largest component jumps from at most $\gamma n$ to at least $\rho n$ in a window of width $2\eps$ around $p_\cri$.

Interestingly, this theorem gives a criterion for the existence of ``sudden'' jumps in $L_1(G_p^\cD)$ that do not necessarily correspond to the phase transition of the appearance of a linear order component. In particular, it applies to cases where the degree sequence is not uniformly integrable. In Section~\ref{sec:exm} we will see an example of this behaviour.
\medskip

The other part of the dichotomy settles the case of robust degree sequences.
\begin{theorem}\label{thm:rob}
For all $p,\delta \in (0,1)$ and all $\avd \geq 1$, 
there exist $K,c_0\in \N$ such that for every $c\geq c_0$, 
there exist $\rho>0$ and $n_0\in \N$ such that for every $n\geq n_0$ and every degree sequence $\cD=(d_1,\dots,d_n)$ with average degree at most $\avd$ that does not satisfy $A_1(K,c)$ or $A_2(K,0,c)$,
then
\begin{align*}
\Pro[L_1(G^\cD_p)>\rho n]\geq 1-\delta\;.
\end{align*}
\end{theorem}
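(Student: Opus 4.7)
The plan is to exhibit, under the failure of at least one of $A_1(K,c)$ and $A_2(K,0,c)$, a substructure inside $G^\cD$ that survives $p$-percolation as a linear-size component, for every fixed $p\in(0,1)$. We first fix $p,\delta\in(0,1)$ and $\avd\geq 1$, pick $K$ very large and $c_0$ so that $(1-p)^{c_0}$ is tiny (both depending only on $p,\delta,\avd$), and then split into two cases according to which condition is violated.

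In the first case $A_1(K,c)$ fails, so $H:=W(c,\cD)$ satisfies $\sum_{i\in H}d_i>Kn/c$. Every $v\in H$ has $d_v\geq c\geq c_0$, hence is incident to at least one retained edge of $G^\cD_p$ with probability at least $1-(1-p)^c\geq 1-\delta/10$. If $|H|\geq\alpha n$ for a suitable $\alpha=\alpha(p,\delta,\avd)>0$, we invoke the switching-method analysis of~\cite{joos2016how} to show that $G^\cD$ contains many vertex-disjoint short paths between typical pairs in $H$; a sprinkling argument then merges a linear fraction of $H$ into one component of $G^\cD_p$. If instead $|H|<\alpha n$, the average degree inside $H$ exceeds $K/(c\alpha)$, and we apply the switching method directly to the bipartite subgraph between $H$ and its neighbourhood to force the percolated neighbourhoods to merge.

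In the second case $A_2(K,0,c)$ fails, so $\sum_{d_i<c}d_i^2>Kn/4$. We truncate to $\cD'=(\min(d_i,c))_{i\in[n]}$; this sequence remains sparse with $\sum_i(d'_i)^2>Kn/4$ and $\sum_i d'_i\leq\avd n$, so its branching ratio $\sum(d'_i)^2/\sum d'_i$ exceeds $K/(4\avd)\geq 10/p$ once $K$ is large. Here the effective degrees $pd'_i$ after percolation satisfy the giant-component criterion of~\cite{joos2016how} with plenty of slack, and re-running that switching argument in the joint random model (graph together with edge retention) should yield the required component of order $\rho n$ with probability $\geq 1-\delta$.

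The hard part will be the first case when $|H|$ is small, where only a few vertices carry the excess degree: no second-moment argument applies directly, since pairs in $H$ can share many common neighbours and a naive BFS branching process is not tree-like. We plan to overcome this by applying the switching machinery of~\cite{joos2016how} to the bipartite subgraph spanned by $H$ and its low-degree neighbourhood, showing that after discarding $O(n/c)$ exceptional edges the remainder is a bipartite expander in which $p$-percolation already keeps $\Omega(n)$ vertices connected. With these structural inputs in place, the final bound $\Pro[L_1(G^\cD_p)>\rho n]\geq 1-\delta$ should reduce to routine concentration.
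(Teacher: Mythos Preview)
This is a plan rather than a proof, and several of the intended steps do not work as written.

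\textbf{Case~2 (failure of $A_2$): the truncation step is not a valid reduction.} Replacing $\cD$ by $\cD'=(\min(d_i,c))_i$ does not correspond to any subgraph of $G^\cD$: deleting edges from high-degree vertices does not yield a \emph{uniform} random graph with degree sequence $\cD'$, so the criterion of~\cite{joos2016how} applied to $\cD'$ tells you nothing about $G^\cD_p$. The paper avoids this by deleting the high-degree \emph{vertices} (specifically $S:=W(\log^2 n,\cD)$). The induced subgraph $G^\cD[V\setminus S]$ is genuinely uniform on its (random) degree sequence $\cD_0$, and Proposition~\ref{prop:imp} transfers the lower bound on $R_p^{\cD}$ to $R_p^{\cD_0}$; then Proposition~\ref{prop:explo}(ii) applies, since $\Delta(\cD_0)\leq\log^2 n\leq n^{1/4}$.

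\textbf{Case~1 (failure of $A_1$): the dichotomy on $|H|$ is vacuous.} Since $\sum_{i\in H}d_i\leq\avd n$ and every vertex in $H=W(c,\cD)$ has degree at least $c$, one always has $|H|\leq \avd n/c$, which is $o(n)$ once $c\geq c_0$ is large. So only your ``hard'' sub-case ever occurs, and there the proposal is a wish list: the claim that after discarding $O(n/c)$ edges the bipartite graph between $H$ and its low-degree neighbourhood is an expander on which $p$-percolation retains a linear component is neither proved nor an obvious consequence of the switching lemmas in~\cite{joos2016how}. In particular, vertices in $H$ may have degree anywhere between $c$ and $n-1$, and the few-huge-vertices regime (e.g.\ a bounded number of vertices of degree $\Theta(n)$) genuinely behaves differently: the percolated graph can fail to have a giant component with probability bounded away from $0$ (see Remark~1 in Section~\ref{sec:remarks}), which is exactly why the conclusion is only $\geq 1-\delta$ rather than $1-o(1)$.

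\textbf{How the paper actually proceeds.} The case split is not by which of $A_1,A_2$ fails, but by whether $\sum_{i\in W(\log^2 n)}d_i\geq Kn/c^3$. If so, Proposition~\ref{prop:very_rob} handles it via a structural analysis of $G^\cD$ on the sets $S_1\supseteq S_2\supseteq S_3$ of vertices of degree $\geq\log^2 n$, $\geq n^{1/3}$, $\geq n^{4/5}$: one shows $G^\cD[S_3]$ is a clique (so $G^\cD_p[S_3]$ is connected with probability $\geq 1-c^{|S_3|}$), that $S_2$ attaches to $S_3$, and that $|N(S_2)\cap T|$ is linear. If not, one removes $S=W(\log^2 n)$ as above and shows directly from the failure of $A_1(K,c)$ or $A_2(K,0,c)$ that $R_p^\cD\geq Kpn/(16c)$; Propositions~\ref{prop:imp} and~\ref{prop:explo}(ii) then give the giant component. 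Your sketch contains neither of these two mechanisms.
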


As defined in~\eqref{eq:crit_prob}, we have $p_\cri>0$~(assuming that $0/0=1$). Intuitively speaking, Theorem~\ref{thm:rob} classifies the degree sequences that have a ``critical percolation threshold'' at $p=0$. It is also worth noticing that if $p=p(n)\to 0$ and $\avd=O(1)$, then $L_1(G_p^\cD)$ is at most the number of edges in $G_p^\cD$, which is $O(p\avd n)= o(n)$ with probability $1-o(1)$.

Theorems~\ref{thm:thres} and~\ref{thm:rob} show that the existence of a critical $p$ for the emergence  of a giant component is determined by the shape of the degree sequence for degrees that are bounded by a constant as well as by the degree sum of vertices of larger degree.  For example, whether a degree sequence contains one vertex of degree $n/2$ or $n/(2\log n)$ vertices of  degree $\log n$ does not make any difference.

\subsection{Approach to the proof of Theorems~\ref{thm:thres} and~\ref{thm:rob}}

Previous work on bond percolation in $G^\cD$ relies on the study of the configuration model. Given $\cD=(d_1,\dots,d_n)$, let $\hat{G}^\cD$ denote the random (multi)graph obtained using the configuration model (see e.g.~\cite{wormald1999models}). 
The first author observed that the percolated random graph $\hat{G}^\cD_p$ has the same distribution as $\hat{G}^{\cD_p}$
where $\cD_p=(d^p_1,\dots,d^p_n)$ is the random sequence obtained by choosing $d^p_i$ distributed as a binomial random variable with parameters $d_i$ and $p$, conditional on $\Sigma_{i\in[n]}d_i^p$ being even~(see Lemma 3.1 in~\cite{fountoulakis2007percolation}). Loosely speaking, this result states that one could interchange the two random processes, percolating first the degree sequence conditional on its sum being even) and then choosing a random graph with the percolated degree sequence. Using this observation one can transfer results for the configuration model to its percolated instances~\cite{bollobas2015old,fountoulakis2007percolation,janson2008percolation}.
These results can be transferred to the simple random graph $G_p^\cD$ provided $\cD$ satisfies certain technical conditions~(see Section~\ref{sec:previous}). 

Joos et al.~\cite{joos2016how} established a criterion for the existence of a linear order component for any degree sequence. Following the previous observation of interchanging the two random processes, one could hope that the largest component of $G_p^\cD$ could be studied directly, applying this criterion. 
However, the following example discusses a degree sequence for which the random graphs $G^\cD_p$ and $G^{\cD_p}$ are drastically different.

\begin{example}\label{exm:1}
Consider the degree sequence $\cD_n= (n/4,n/4, 1,1\dots,1)$ and let $p=1/2$.
By standard concentration inequalities, with high probability, the degree sequence $\cD_p$ satisfies $d^p_1, d^p_2=(1+o(1))n/8$ and $\Sigma_{i\in[n]}d_i^p = (1+o(1))3n/4$. The sequence $\cD_p$, if feasible, it is highly likely to satisfy $v_1v_2\in E(G^{\cD_p})$ (this can be shown by an easy switching argument; see Section~\ref{sec:switching}). However, by definition, the probability that $v_1v_2\in E(G^{\cD}_p)$ is at most $1/2$. As all vertices different from $v_1$ and $v_2$ have degree one, the order of the largest component will strongly depend on the existence of the edge $v_1v_2$. So the component structure of $G^\cD_p$ and $G^{\cD_p}$ is different.
\end{example}

\subsection{Comparison of Theorem~\ref{thm:seq} to previous results}\label{sec:previous}

The strongest statements in our paper are Theorems~\ref{thm:thres} and~\ref{thm:rob}. However, as previous results deal with sequences of degree sequences $\fD=(\cD_n)_{n\geq 2}$, it is more convenient to compare them to Theorem~\ref{thm:seq}. Additionally, we will assume that $D_n$ converges in distribution to the random variable $D$, denoted by $D_n\to D$, where $D$ has finite and positive mean $\bE[D]$; that is, there exists a probability distribution $(r_k)_{k\geq 0}$ such that for every $k\in \bN$
\begin{align}\label{eq:lim}
\lim _{n\to \infty} \frac{\{i\in [n]:\, d^{(n)}_i=k\}}{n} = r_k\;.
\end{align}
and $\bE[D] =\sum_{k\geq 0} k r_k \in (0,\infty)$.
Observe that~\eqref{eq:lim} implies that $d(\fD)$ exists.

Note that Theorem~\ref{thm:seq} only requires the existence of $d$. This is a slightly  weaker condition than the convergence of $D_n$ in distribution, but it is similar in spirit.

In this context, consider the following condition on the convergence of means
\begin{align}\label{eq:UI_lim}
\lim_{n\to \infty} \bE[D_n] =\bE[D]\;. 
\end{align}
Condition~\eqref{eq:UI_lim} can be easily replaced by the slightly weaker condition that $D$ is uniformly integrable (see e.g. Remark 2.2 in~\cite{janson2009new}). Moreover, given that $D_n\to D$, the condition that 
$D_n$ is (strongly) uniformly integrable as in~\eqref{eq:UI_seq} (and~\eqref{eq:SUI_seq}) is equivalent to $D$ being (strongly) uniformly integrable.

Additionally, we say that the sequence $(D_n)_{n\geq 2}$ 
has \emph{bounded second moment} if
\begin{align}\label{eq:BSO}
\bE[D_n^2]= O(1) \;. 
\end{align}
Under the assumption $D_n\to D$, we have following implications: 
\medskip

%
%

\begin{center}
\begin{tabular}{ccccc}
 $d(\fD)<\infty$& $\Leftarrow$ & \emph{Bounded second moment}& $\Rightarrow$ &\textit{(Strong) uniform integrability} 
\end{tabular}
\end{center}
\medskip
All the implications are straightforward to check, so we omit their proofs. 

We now state the two central results from the literature on bond percolation.
\begin{theorem}[Proposition 3.1 in~\cite{janson2008percolation}] \label{thm:jan} 
Suppose $\fD= (\cD_n)_{n\geq 2}$ is a sequence of degree sequences such that
$D_n \to D$ and $(D_n)_{n\geq 2}$ has bounded second moment.
If $p>1/d(\fD)$, there exists $\xi>0$ such that
$$
\frac{L_1(G_p^{\cD_n})}{n} \stackrel{p}{\to} \xi\;,
$$
and if $p<1/d(\fD)$, then
$$
\frac{L_1(G_p^{\cD_n})}{n} \stackrel{p}{\to} 0\;.
$$
\end{theorem}
Now,  let $\rho(D)$ be the survival probability of a Galton-Watson tree with offspring distribution given by $D$. Let $D_p$ be the \emph{$p$-thinned} version of $D$, defined by 
$$
\bP[D_p=i]=\sum_{j\geq i} \bP[D=j] \binom{j}{i}p^i(1-p)^{j-i}\;.
$$
\begin{theorem}[Theorem 22 in~\cite{bollobas2015old}]\label{thm:bol_rio} 
Suppose $\fD= (\cD_n)_{n\geq 2}$ is a sequence of degree sequences such that
$D_n \to D$ and $(D_n)_{n\geq 2}$ is uniformly integrable.
Then for every $p\in (0,1)$
$$
\frac{L_1(G_p^{\cD_n})}{n} \stackrel{p}{\to} \rho(D_p)\;.
$$
\end{theorem}
The aim of this paper is to obtain results on the existence of a linear order component in $G_{p}^\cD$ that are as widely applicable as possible, even if some precision is lost due to their generality. While Theorems~\ref{thm:jan} and~\ref{thm:bol_rio} are more precise in their conclusions, they require conditions on $\fD$ (bounded second moment and uniform integrability, respectively) that are not necessary in Theorem~\ref{thm:seq}. For instance, the results in Theorem~\ref{thm:seq}~\ref{item:2} when $\fD$ is not uniformly integrable are not implied by any of the previous results in the literature. As we will show in the next sub-section, the case of non-uniformly integrable sequences is particularly interesting, and one cannot hope for very strong results in this setting. In conclusion, Theorem~\ref{thm:seq} and the previous results complement each other, and their use is a compromise between precision and generality.
%


\subsection{Non-uniformly integrable sequences}\label{sec:exm}

Roughly speaking, a degree sequence is not uniformly integrable if the vertices of unbounded degree have non-negligible contribution to the average degree. Here, we include two illustrative examples.

\begin{example} The order of the largest component may not be concentrated.
Consider again the degree sequence $\cD_n= (n/4,n/4, 1,1\dots,1)$ given in Example~\ref{exm:1} and let $p\in (0,1)$. 
Using switchings, one can show that $v_1v_2\in E(G^{\cD_n})$ with probability $1-o(1)$.
Hence,
for any fixed $p\in (0,1)$ the probability that $v_1v_2\in E(G_p^{\cD_n})$ is $(1-o(1))p$, bounded away from $0$ and $1$. 
If $v_1,v_2$ are adjacent in $G_p^{\cD_n}$, the order of the largest component will be distributed as $\Bin(n/2-2, p)$; 
otherwise, it will be distributed as the maximum over two independent copies of $\Bin(n/4,p)$. 

This example can be generalised to produce degree sequences  $\cD_n$ satisfying the following: for every $\rho>0$, there exists $\delta>0$ such that 
\begin{align}
\bP[ L_1(G_p^{\cD_n}) < \rho n] &>\delta \label{eq:ASFS}\\
\bP[ L_1(G_p^{\cD_n}) > (1-\rho) n]  & > \delta\;. \nonumber
\end{align}
Thus, one cannot expect that $n^{-1} L_1(G_p^{\cD_n})$ converges in probability  to a constant as in Theorems~\ref{thm:jan} and~\ref{thm:bol_rio}. Moreover,~\eqref{eq:ASFS} shows that the statement of Theorem~\ref{thm:seq} $(ii)$ cannot hold with probability $1-o(1)$.
\end{example}

\begin{example}
As mentioned above, Theorem~\ref{thm:thres} can be used to detect sudden changes of the order of the largest component that do not coincide with the birth of the giant component. Consider the following degree sequence $\cD_n=(\xi n, 3,3,\dots, 3)$, for some $\xi>0$. 
Theorem~\ref{thm:rob} shows that the size of the largest component of $G_p^{\cD_n}$ is linear for every $p>0$, in fact, it stochastically dominates a $\Bin(\xi n,p)$. However, we can also apply Theorem~\ref{thm:thres} with  $p_{\text{crit}}=1/2$.
In particular, given $\eps,\gamma,\rho$, we can choose $\xi$ sufficiently small so Theorem~\ref{thm:thres} gives a jump in $L_1(G_p^{\cD_n})$ from at most $\gamma n$ to at least $\rho n$ in a window of width $2\eps$ around $p_{\text{crit}}$.  Hence the birth of the giant component is at $p=0$ (by Theorem~\ref{thm:rob}) and there is a boost at $p=1/2$ (by Theorem~\ref{thm:thres}). 

Intuitively, if a sequence is not uniformly integrable, then it has linearly many edges in vertices of unbounded degree. Under some weak conditions, these vertices will typically form a connected core, even after percolation. This core contains linearly many edges and it typically creates a linear order component. However, if $p_{\text{crit}}>0$, then for $p> p_\cri$, the vertices of bounded degree will percolate even without the help of unbounded degree vertices, and thus, the growth of the giant component changes at $p_{\text{crit}}$. To our knowledge, this critical point has not been studied in the literature.  It would be interesting to get a better understanding of the size of the largest component around this point.
\end{example}

%

\medskip

\textbf{Structure of the paper:} The paper is structured as follows. In Section~\ref{sec:not}, we provide the basic notation and some technical estimates that will be used throughout the proof. Section~\ref{sec:overview} presents the main combinatorial tool we will use, the switching method, and provides an overview of the proof of Theorem~\ref{thm:thres} and~\ref{thm:rob}. In Section~\ref{sec:3_tec}, we present three important technical propositions. Assuming them, in Section~\ref{sec:thres} and~\ref{sec:rob} we prove Theorem~\ref{thm:thres} and~\ref{thm:rob}, respectively. 
In Section~\ref{sec:det_lem},~\ref{sec:explo} and~\ref{sec:very_rob} we prove these three propositions. 
Section~\ref{sec:sequ} is devoted to the proof of Theorem~\ref{thm:seq}. We provide an application of the results obtained to power-law degree sequences in Section~\ref{sec:PL}.
Finally, in Section~\ref{sec:remarks}, we state some remarks of our results and discuss a number of open questions.

\section{Notation and some probabilistic tools}\label{sec:not}

We consider labelled graphs $G$ with vertex set $V=V(G)=[n]:=\{1,\dots,n\}$ and edge set $E(G)$.
If we refer to a graph or degree sequence on the set $V$, 
we always implicitly assume that $V=[n]$ and thus $|V|=n$.
We say that a graph $G$ on $V$ 
has degree sequence $\cD=(d_1,\dots, d_n)$ if for every $i\in [n]$, the degree of $i$ is $d_i$. 
We let $\Td$ denote the sum of these degrees; that is, $\Td := \sum_{i=1}^n d_i$.
We denote by $|\cD|$ the length of the degree sequence. 

For an arbitrary vertex 
$v\in V$, we will often write $d(v)=d_G(v)$ for its degree. 
Let $H$ be a subgraph of $G$; if $v \in V(H)$, then $d_{H}(v)$ 
denotes the degree of $v$ in $H$; 
if $v \in V\sm V(H)$, then $d_{H}(v)=0$. 
For a graph $G$, a subset of vertices $U\subseteq V$ and $v\in V$, 
we occasionally use the notation $d_G(v,U)$ to denote the number of neighbours of $v$ in $G$ that are in the subset $U$. 
Given a degree sequence $\cD$ and a graph $G$, we denote by $\Delta(\cD)$ (and $\delta(\cD)$) and by $\Delta(G)$ (and $\delta(G)$) the maximum (and minimum) degree of the sequence $\cD$ and of the graph $G$, respectively.

We denote by $N(v)=N_G(v)$ the set of neighbours of $v$ in $G$.
For $S\subseteq V$, we use $N(S)=N_G(S)$ for the set of vertices in $V\sm S$ that have a neighbour in $S$. 
We also use $N[S]=N_G[S]$ for the set of vertices that are either in $S$ or in $N(S)$.
For $S\subseteq V$, we denote by $G[S]$ the (sub)graph of $G$ induced by $S$. 
For disjoint $S,T\subseteq  V$, we denote by $G[S,T]$ the bipartite graph induced between $S$ and $T$.

We will make use of some classical concentration inequalities that can be found in~\cite{molloy2013graph}.
\begin{lemma}[Chernoff's inequality] \label{lem:Chernoff}
Let $X_1, \dots, X_N$ be a set of independent Bernoulli random variables with expected value $p$ and let $X=\sum_{i=1}^N X_i$.  Then, for every $0<t<Np$
$$
\Pro[|X-\bE[X]|>t]\leq 2e^{-\frac{t^2}{3N p}}\;.
$$
\end{lemma}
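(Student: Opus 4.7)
The plan is to use the classical Chernoff--Cram\'er moment generating function method. The probability $\Pro[|X-\bE[X]|>t]$ splits as $\Pro[X \geq Np+t] + \Pro[X \leq Np-t]$, and treating these two tails symmetrically produces the factor of $2$ in the statement.

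For the upper tail I would fix $\lambda>0$ and apply Markov's inequality to the nonnegative random variable $e^{\lambda X}$, obtaining $\Pro[X\geq Np+t] \leq e^{-\lambda(Np+t)}\bE[e^{\lambda X}]$. Independence of the $X_i$ lets me factor $\bE[e^{\lambda X}]=\prod_{i=1}^N (1+p(e^\lambda-1))$, and the elementary bound $1+x\leq e^x$ gives $\bE[e^{\lambda X}] \leq \exp(Np(e^\lambda-1))$. Optimising over $\lambda$ (the minimiser is $\lambda=\ln(1+t/(Np))$) yields the standard exponent $-Np\cdot h(t/(Np))$ with $h(u)=(1+u)\ln(1+u)-u$. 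The lower tail is handled in the same way using $\lambda<0$ (or, equivalently, by running the argument with $1-X_i$ in place of $X_i$ and exchanging the roles of $p$ and $1-p$); it produces an even better exponent $-Np\cdot h_-(t/(Np))$ with $h_-(u)=-u-(1-u)\ln(1-u)$.

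To convert these optimised exponents into the form stated in the lemma, one invokes the elementary calculus estimates $h(u)\geq u^2/(2+2u/3)$ and $h_-(u)\geq u^2/2$, valid for $0<u<1$. Under the hypothesis $t<Np$ we have $u=t/(Np)<1$, so in particular $h(u)\geq u^2/3$, and both tail probabilities are bounded by $\exp(-t^2/(3Np))$; adding them gives the factor $2$.

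Since this is a textbook Chernoff--Cram\'er bound, no step presents a real obstacle; the only non-trivial ingredients are the exponential Markov inequality combined with the factorisation of the MGF, and the convex-analytic lower bounds for the rate functions $h$ and $h_-$. The mild looseness of those rate-function estimates is precisely what accounts for the convenient constant $3$ in the denominator rather than the sharper $2$ that a more careful one-sided analysis would give when $t$ is much smaller than $Np$.
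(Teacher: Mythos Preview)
Your argument is correct and is exactly the standard Chernoff--Cram\'er derivation. The paper, however, does not prove this lemma at all: it simply quotes it as a classical concentration inequality with a reference to the textbook~\cite{molloy2013graph}. So there is no ``paper's own proof'' to compare against; your outline is precisely the textbook proof one would find behind that citation.
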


\begin{lemma}[McDiarmid's inequality] \label{lem:McDineq}
Let $X_1, \dots, X_s$ be a set of independent random variables taking values in $[0,1]$. Let $f: [0,1]^s\to \mathbb{R}$ be a function of $X_1,\dots, X_s$ that satisfies for every $1\leq i \leq s$, every $x_1,\dots,x_s \in [0,1]$ and every $x_i' \in [0,1]$,
$$
|f(x_1,\dots, x_i, \dots, x_s)- f(x_1,\dots, x'_i, \dots, x_s)|\leq c_i\;,
$$
for some $c_i>0$. Then, for every $t>0$
$$
\Pro[|f(X_1,\dots, X_s)-\bE[f(X_1,\dots, X_s)]|>t]\leq 2e^{-\frac{2t^2}{\sum_{i=1}^s c_i^2}}\;.
$$
\end{lemma}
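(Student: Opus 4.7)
The plan is to prove the bounded differences inequality via the standard Doob martingale argument combined with the Azuma--Hoeffding inequality. Set $Y_0 := \bE[f(X_1,\dots,X_s)]$ and, for each $1\leq i \leq s$, define the Doob martingale
\begin{equation*}
Y_i := \bE[f(X_1,\dots,X_s)\mid X_1,\dots,X_i],
\end{equation*}
so that $Y_s = f(X_1,\dots,X_s)$ and $(Y_i)_{0\leq i \leq s}$ is a martingale with respect to the filtration generated by $X_1,\dots,X_s$. The goal is then to bound the martingale differences $D_i := Y_i - Y_{i-1}$ and apply Azuma--Hoeffding.

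The key step is to verify that each $D_i$ lies in an interval of length at most $c_i$ that is measurable with respect to $\sigma(X_1,\dots,X_{i-1})$. By independence of the $X_j$, one can write
\begin{equation*}
Y_i = g_i(X_1,\dots,X_i), \quad g_i(x_1,\dots,x_i) := \bE[f(x_1,\dots,x_i,X_{i+1},\dots,X_s)],
\end{equation*}
and similarly $Y_{i-1} = \bE[g_i(X_1,\dots,X_{i-1},X_i)\mid X_1,\dots,X_{i-1}]$. Define
\begin{equation*}
U_i := \sup_{x\in[0,1]} g_i(X_1,\dots,X_{i-1},x), \qquad L_i := \inf_{x\in[0,1]} g_i(X_1,\dots,X_{i-1},x).
\end{equation*}
The bounded differences hypothesis, applied inside the expectation defining $g_i$, yields $U_i - L_i \leq c_i$ almost surely. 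Since $Y_i$ and $Y_{i-1}$ both lie in $[L_i, U_i]$, we obtain $D_i \in [L_i - Y_{i-1}, U_i - Y_{i-1}]$, an interval of length at most $c_i$ that is $\sigma(X_1,\dots,X_{i-1})$-measurable, and $\bE[D_i\mid X_1,\dots,X_{i-1}] = 0$.

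Given these bounds, I would invoke Hoeffding's lemma conditionally: if $Z$ is a random variable with $\bE[Z\mid\mathcal{G}]=0$ and $Z\in[a,b]$ almost surely (with $a,b$ $\mathcal{G}$-measurable), then $\bE[e^{\lambda Z}\mid\mathcal{G}]\leq \exp(\lambda^2(b-a)^2/8)$. Applying this to $D_i$ and iterating gives
\begin{equation*}
\bE[e^{\lambda(Y_s - Y_0)}] \leq \exp\!\Bigl(\tfrac{\lambda^2}{8}\sum_{i=1}^s c_i^2\Bigr).
\end{equation*}
A Markov bound on $e^{\lambda(Y_s - Y_0)}$ optimised at $\lambda = 4t/\sum c_i^2$ yields $\bP[Y_s - Y_0 > t] \leq \exp(-2t^2/\sum c_i^2)$; the symmetric bound for the lower tail follows by applying the same argument to $-f$, and a union bound gives the factor of $2$. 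The main technical obstacle is the careful verification that the martingale differences enjoy intervals of length at most $c_i$ despite conditional expectations being involved; this is where the pointwise bounded differences hypothesis is used in its full strength, and where one must be mindful that the random variables $X_j$ take values in the continuum $[0,1]$ rather than a discrete alphabet, so the sup/inf above must be handled via Fubini and independence rather than by a direct case analysis.
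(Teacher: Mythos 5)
The paper states this lemma as a classical tool and cites it to Molloy and Reed's book rather than proving it; there is no in-paper proof to compare against. Your argument is the standard Doob-martingale plus Azuma--Hoeffding proof of McDiarmid's bounded-differences inequality and it is correct as outlined: the identification $Y_i = g_i(X_1,\dots,X_i)$ via independence, the verification that both $Y_i$ and $Y_{i-1}$ lie in the predictable interval $[L_i,U_i]$ of length at most $c_i$, the conditional Hoeffding lemma, and the optimization $\lambda = 4t/\sum c_i^2$ giving the exponent $-2t^2/\sum c_i^2$ are exactly the right ingredients, and the factor $2$ from the union bound over the two tails matches the statement.
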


Many of our results have complicated hierarchies of constants.
To be precise, 
if we say that a statement holds whenever $a \ll b \ll c \leq 1$, 
then this means that there are non-decreasing
functions $f,g : (0, 1] \rightarrow (0, 1]$ such that the result holds for all $0 < a, b,c \leq 1 $ with $a \leq f(b)$ and $b \leq g(c)$.
In particular, such hierarchies need to be read from right to left.
We will not calculate these functions explicitly in order to simplify the presentation, 
but one could easily calculate them from inspecting our proofs.
Hierarchies with more terms are defined in a similar way.
Finally, we write $x = a\pm b$ to denote that $x\in [a-b,a+b]$, for real numbers $x,a,b$.

\section{Overview of the proofs}\label{sec:overview}
In this section we present an overview of the proofs of Theorems~\ref{thm:thres} and~\ref{thm:rob}
as well as of the main method used in them.
\subsection{The switching method}\label{sec:switching}
Let $\cG^\cD$ be the set of simple graphs with degree sequence $\cD$. Throughout our proofs we want to 
consider the probability that $G^\cD$, a uniformly chosen element of $\cG^\cD$, satisfies a certain property. 
We will slightly abuse notation by indistinctly referring to $\cG^\cD$ as a set of graphs and as a probability space 
equipped with the uniform distribution. Similarly, we will consider $\cF\subseteq \cG^\cD$ and talk about the 
probability of $\cF$, thought as an event in the probability space $\cG^\cD$.

The main combinatorial tool that we use to estimate different probabilities in $\cG^\cD$ is the \emph{switching method}. 
Given a graph $G$ with degree sequence $\cD$ and two ordered edges $uv,xy\in E(G)$ 
we can perform the following graph 
operation, called a $\{uv,xy\}$-switch, or simply a \emph{switch}: obtain $G'$ by deleting the edges $uv$ and $xy$ from 
$G$ and adding the edges $ux$ and $vy$ in $G$. Observe that the $\{uv,xy\}$-switch is different from the $\{vu,xy\}$-switch, but equal to the $\{vu,yx\}$-switch.
If either $ux$ or $vy$ are edges of $G$, 
the graph $G'$ will have multiple edges, and if either $u=x$ or $v=y$, 
the graph $G'$ will have loops. Since we restrict here to simple graphs, we say that a switch is \emph{valid} if none of these occur.

The basic idea of the switching method is the following.
In order to determine the probability of $\cF\subseteq \cG^\cD$ in terms of 
the probability of $\cF'\subseteq \cG^\cD$, 
we use the average number of valid switches
between a graph in $\cF$ and a graph in $\cF'$, denoted by $d(\cF\to\cF')$, and vice versa. A simple double-counting of such 
switches gives
\begin{align} \label{eq:fund}
\Pro[\cF]= \frac{d(\cF'\to\cF)}{d(\cF\to\cF')}\cdot \Pro[\cF']\;.
\end{align}
Although this relation is very simple,
the switching method is very powerful.
In particular, we avoid the use of the configuration model and all the technicalities that come with it.

\subsection{The emergence of the giant component}

In~\cite{joos2016how}, a characterisation is given of those degree sequences $\cD$ for which the random graph $G^{\cD}$ has 
a giant component with high probability. 
Let $j^\cD$ be the smallest $j\in \N$ such that 
$$\sum_{i=1}^j d_i( d_i-2)>0$$
if such $j$ exists and else $j^\cD=n$.
Also, they set $R^{\cD}:= \sum_{i=j^{\cD}}^n d_i$ and $M^{\cD}:= \sum_{i:d_i\neq 2} d_i$.  
Effectively, $G^{\cD}$ has a giant component with high probability if and only if $R^{\cD}$ grows linearly in $M^\cD$. 

As we deal with bond percolation on $G_p^{\cD}$, 
we need to consider 
generalizations
of these quantities. 
\begin{enumerate}[label=(\roman*)]
\item Let $j^\cD_p$ be the minimum between $n$ and the smallest natural number $j$ such that 
$$\sum_{i=1}^j d_i(p(d_i-1)-1)>0\;.$$
\item Let $R^\cD_p := \sum_{i=j^\cD_p}^n (p(d_i-1)-1)$.
\end{enumerate}
Observe that if $\sum_{i=1}^n d_i(p(d_i-1)-1)\leq 0$, we have $R^\cD_p = p(d_n-1)-1$.  

Note that in the case $p=1$, the definition of  $j^\cD_p$ coincides with the definition of $j^\cD$.
Moreover, $R^\cD_1\leq R^\cD \leq 3 R^\cD_1$. In our setting we do not need to define an analogue of $M^\cD$ since vertices of degree $2$ play no special role here (see Section~\ref{sec:remarks} for a detailed discussion).
Also note that vertices of degree $0$ have no contribution to 
these parameters. 
So, it is useful to assume, and we will do so if not stated otherwise, that $d_i\geq 1$ for every $i\in [n]$.
The result for degree sequences containing vertices of degree $0$ can be easily deduced from the analysis of degree sequences without them.


Theorem~\ref{thm:thres} and~\ref{thm:rob} essentially distinguish between two cases on the tail of the degree sequence. 
In Theorem~\ref{thm:thres}, we show that a critical percolation threshold $p_\cri$ exists if the two conditions $A_1(\cdot,c_1,c_2)$ and $A_2(\cdot,c_2)$ hold. 
These conditions bound the number of edges incident to vertices of large degree. One should note that the two conditions are only required for particular values of the degrees, namely $c_1$ and $c_2$, and thus, they are much weaker than a domination condition on the whole tail of the degree sequence.
The heart of the proof of Theorem~\ref{thm:thres} is the analysis of  an \emph{exploration process}, 
in which we reveal the components 
of $G_p^{\cD}$ by exposing the neighbours of each vertex sequentially (cf. Proposition~\ref{prop:explo}).  
To avoid technical difficulties that arise due to high degree vertices, we include them together with their neighbours in the 
initial set of explored vertices. So during the process, we only reveal the connections of vertices of low or moderately-growing degree. 
Let $S$ denote the set of high degree vertices (we will specify the exact magnitude during the proof). 
We show that if $\sum_{v \not \in N[S]} d(v) (p(d(v)-1)-1)$ is negative and actually decays linearly with $n$, then the exploration process
 is subcritical in the sense that all components it reveals are sub-linear. 
If $\sum_{v \not \in N[S]} d(v) (p(d(v)-1)-1)$ is positive and grows linearly,  then one can use condition $A_2$ to ensure that 
 $R^{\cD}_p$ grows linearly. In that case, it turns out that the exploration process will reveal a component of linear order with high probability. 
In Sections~\ref{sec:thres} and~\ref{sec:rob}, we show how these two conditions give a critical value $p_\cri$ such that when $p$ goes from less than $(1- \epsilon)p_\cri$, 
to at least $(1+\epsilon) p_\cri$, 
the fraction of vertices in the largest component undergoes an abrupt increase.

%
 
Regarding Theorem~\ref{thm:rob}, recall that its premises cover degree sequences that have a quite heavy tail, that is, either Condition $A_1$
or Condition $A_2$ fails. Here, we distinguish between two sub-cases. The first is that a set $S_1$ of very high (growing) degree vertices have linear 
total degree. The boundedness of the average degree implies that $S_1$ is small (of sub-linear size). Moreover, we show that with
high probability $G_p^\cD[S_1]$ is connected and that more than half of the edges incident to $S_1$ in $G^\cD$, have their other endpoint in $V\sm S_1$. Deleting each such
edge with probability that is bounded away from 0 leaves with high probability a giant component. This is stated in Proposition~\ref{prop:very_rob}.
Now, if $S_1$ does not have linear total degree, then we show that its removal leaves a degree sequence $\cD'$ that is \emph{super-critical}: the quantity $R^{\cD'}_p$ grows linearly in $n$. One can then apply again Proposition~\ref{prop:explo} to find a giant component, concluding the proof of Theorem~\ref{thm:rob}. 

The transition from $R^{\cD}_p$ to $R^{\cD'}_p$ requires a result (Proposition~\ref{prop:imp}) which shows that if 
$R^{\cD}_p$ grows linearly in $n$ and we remove a set of vertices of small total degree, then the resulting degree 
sequence $\cD'$ is such that $R^{\cD'}_p$ still grows linearly, albeit with a smaller coefficient.

Finally, the proof of Theorem~\ref{thm:seq} is a relatively straightforward application of both Theorem~\ref{thm:thres} and~\ref{thm:rob} and it is proved in Section~\ref{sec:sequ}.

\section{Three technical propositions}\label{sec:3_tec}

In this section we introduce three important propositions that will allow us to prove Theorem~\ref{thm:thres} and~\ref{thm:rob}. We defer their proofs to Sections~\ref{sec:det_lem},~\ref{sec:explo} and~\ref{sec:very_rob}, respectively.

The first one is a \emph{deterministic} proposition which proves the following. 
Suppose $G$ is a graph with degree sequence $\cD$ and $S\subseteq V(G)$ such that $\sum_{u\in S}d(u)$ is small.
Suppose $\cD'$ is a possible degree sequence of the graph $G-S$,
then $R^{\cD'}_p$ is bounded from below by $R^{\cD}_p/50$.


\begin{proposition}\label{prop:imp}
Suppose $1/n\ll\nu$, $400\nu\leq\mu\leq 1$, and $p\in (0,1]$. 
Suppose $\cD$ is a degree sequence on $V$ with $R^\cD_p\geq \mu n$ and let $S\subseteq V$ be 
such that $\sum_{v\in S}d(v)\leq \nu n$. 
Assume that  $G'$ is a graph obtained from a graph $G$ with degree sequence $\cD$ by deleting all vertices in $S$ and afterwards by deleting all vertices of degree~$0$. Let
$\cD'$ be the degree sequence of $G'$ and assume it has length $n'$. 
Then $n'\geq (1-2\nu)n$ and $R^{\cD'}_p\geq \frac{\mu}{50} n'$. 

Moreover, if $G=G^\cD$ and $\cD'$ is the degree sequence of $G'$, then $G'$ is a uniformly random graph with degree sequence $\cD'$, that is $G'=G^{\cD'}$.
\end{proposition}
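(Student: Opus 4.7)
Since $d_i \geq 1$ for all $i$, we have $|S|\leq \sum_{v\in S} d(v)\leq \nu n$. After deleting $S$, a vertex $w\in V\setminus S$ becomes isolated if and only if all its neighbours lie in $S$, so the number of such $w$ is at most the number of edges between $S$ and $V\setminus S$, which is at most $\sum_{v\in S} d(v)\leq \nu n$. Hence $n'\geq n-2\nu n$.

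\textbf{Bound on $R^{\cD'}_p$.} Set $\tau:=d_{j^\cD_p}$, $T:=\{v:d(v)\geq \tau\}$, $D:=\sum_{v\in T}d(v)$, $t:=|T|$, so that
\[
R^\cD_p \;=\; \sum_{v\in T}\bigl(p(d(v)-1)-1\bigr)\;=\;pD-(p+1)t\;\geq\;\mu n.
\]
Two bookkeeping facts from the hypothesis drive the argument: $\sum_{v\in T\cap S}d(v)\leq \nu n$, and $\sum_{v\in V\setminus S}\ell(v)\leq \nu n$ where $\ell(v):=|N_G(v)\cap S|$; in particular $\sum_{v\in T\setminus S} d_{G'}(v)\geq D-2\nu n$. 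My plan is to exhibit a threshold $\tau^*$ and a set $T^*\subseteq\{v\in V(G'):d_{G'}(v)\geq \tau^*\}$ realising the top $|T^*|$ indices of $\cD'$ such that (i) the partial-sum condition $\sum_{i\leq j^*} d'_i(p(d'_i-1)-1)>0$ holds at $j^*:=n'-|T^*|+1$, which forces $j^{\cD'}_p\leq j^*$ and, since every term $p(d'_i-1)-1$ with $i\geq j^{\cD'}_p$ is non-negative,
\[
R^{\cD'}_p\;\geq\;\sum_{v\in T^*}\bigl(p(d_{G'}(v)-1)-1\bigr);
\]
and (ii) this sum is at least $\mu n/50$. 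The natural first try $\tau^*=\tau$, $T^*=\{v\in T\setminus S:d_{G'}(v)\geq \tau\}$ combined with the identity $p(d-\ell-1)-1=p(d-1)-1-p\ell$ gives $\sum_{v\in T^*}(p(d_{G'}(v)-1)-1)=R^\cD_p-(\text{losses})$, and a short calculation bounds those losses by $O(\nu n\,\tau)$: the contribution of $T\cap S$ is $\leq p\nu n$, the per-edge drop costs a total of $p\sum_{T^*}\ell(v)\leq p\nu n$, and the ``top vertices dropped below $\tau$'' contribute at most $p\tau\nu n$ since each such vertex consumes at least one unit of the $\nu n$ edge-loss budget.

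The hard part will be verifying (i) and handling the large-$\tau$ regime. When $\tau$ is big, $T$ may be small enough that removing $S$ can push many of its members below $\tau$, so $T^*$ must be enlarged by lowering $\tau^*$, while still controlling the negative contributions $d'_i(p(d'_i-1)-1)$ of the vertices in $\cD'$ with degree strictly between $\delta=1+1/p$ and $\tau^*$ so that (i) holds. I anticipate a case split on whether $\tau\leq K$ for some absolute constant $K$: in the small-$\tau$ regime the computation above works directly; in the large-$\tau$ regime one exploits $|T|\leq R^\cD_p/(p\tau-p-1)$ together with $|T\cap S|\leq \nu n/\tau$ and $|\{v\in T\setminus S : d_{G'}(v)<\tau\}|\leq \nu n$ to show that the surviving high-degree vertices alone carry essentially all of $R^\cD_p$. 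The gap between the factor $400$ in the hypothesis $\mu\geq 400\nu$ and the factor $50$ in the conclusion provides the slack needed to absorb the case analysis (roughly a factor of $8$).

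\textbf{Uniformity of $G'$.} Condition on the labelled vertex set $V'=V\setminus S\setminus I$ (where $I$ is the set of vertices isolated in $G-S$) and on the labelled degree function $d_{G'}:V'\to\N$ realising $\cD'$. For any two graphs $H_1,H_2$ on $V'$ with this labelled degree function, the number of $G\in\cG^\cD$ whose deletion yields $H_i$ equals the number of ways to add, to $H_i$, edges inside $S$, bipartite edges between $V'$ and $S$ with prescribed $V'$-side degrees $d(v)-d_{G'}(v)$, and bipartite edges between $S$ and $I$ with $I$-side degrees $d(v)$, so that every vertex in $S$ attains its prescribed $\cD$-degree. This extension count depends only on $\cD$ and $d_{G'}|_{V'}$, not on the internal edges of $H_i$; hence $H_1$ and $H_2$ have equally many preimages in $\cG^\cD$, and the conditional distribution of $G'$ is uniform over graphs with degree sequence $\cD'$.
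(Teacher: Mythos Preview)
Your bound on $n'$ is correct, and your uniformity argument is sound (the paper does not spell this part out either). The gap is in the lower bound on $R^{\cD'}_p$: what you have written is a plan, not a proof. You set up a reasonable framework---find $\tau^*$ and $T^*$ so that (i) the $\cD'$-prefix sum up to $j^*$ is positive and (ii) $\sum_{T^*}(p(d_{G'}(v)-1)-1)\geq \mu n/50$---and you sketch (ii) for the choice $\tau^*=\tau$ with a loss of $O(p\tau\nu n)$. But you then write ``the hard part will be verifying (i)'' and never do so, in either branch of your proposed case split. Your sentence ``in the small-$\tau$ regime the computation above works directly'' refers only to the (ii)-loss estimate; it says nothing about (i), which requires controlling how $\sum_{i<j^*} d'_i(p(d'_i-1)-1)$ changes when vertices of $S$ are removed, degrees drop, formerly-top vertices fall below the threshold, and the sequence is re-sorted. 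For the large-$\tau$ regime you list some inequalities without saying how they assemble. This is exactly the technical heart of the proposition; the paper itself remarks that ``the fact that some edge deletions may cause significant reordering in ordered degree sequences makes the proof technical and complex.''

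For comparison, the paper does not split on the size of $\tau$. It first disposes of $\Delta(G)\geq \mu n/(40p)$ trivially, and then, writing $c=d_{j^\cD_p}$, $f_k=p(d_k-1)-1$, $A=\{k<j^\cD_p\}$, $B=[n]\setminus A$, it works with nested subsets $T_2\subseteq T_1\subseteq T:=\{k\in B: f'_k\geq f_k/2\}$ chosen greedily in the \emph{new} ordering of $\cD'$ so that $\sum_{T_1}d'_kf'_k$ and $\sum_{T_2}d'_kf'_k$ just exceed $c\mu n/20$ and $pc^2+8cp\nu n$ respectively. The role of $T_2$ is precisely your missing step: one checks $\sum_{k\in A\cup B_1\cup T_2}d'_kf'_k>0$ (where $B_1=\{k\in B:f'_k<0\}$), which is their analogue of (i), while $\sum_{T\setminus T_2}f'_k\geq \mu n/4$ survives. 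A final dichotomy on whether $(T\setminus T_2)\cap A'$ carries at least $\mu n/8$ of this then yields $R^{\cD'}_p\geq \mu n/16$ in either case. Your threshold idea may well be completable along similar lines, but the work you omitted is precisely the work that needs doing.
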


The key ingredient for the proof of both Theorem~\ref{thm:thres} and~\ref{thm:rob} is the following proposition 
that gives us the component structure of $G^\cD_p$. 
This proposition can be viewed as the extension of the main theorem in~\cite{joos2016how}. 
It states that if the drift on the bulk of the vertices (that is, excluding vertices of very high degree and their neighbours) 
is negative, then the fraction of vertices in the largest component is a.a.s.~bounded by some small constant. 
On the other hand,
if $\Delta(\cD)\leq n^{1/4}$ and $R_p^{\cD}\geq \mu n$, then we have a.a.s.~a component containing a constant fraction of all vertices.

\begin{proposition}\label{prop:explo}
Suppose $n\in \N$ and $1/n\ll \alpha\ll \gamma\ll\mu,1/\avd,p\leq 1$. 
Let $\cD$ be a degree sequence on $V$ with $\Td\leq \avd n$.
\begin{enumerate}[label=(\roman*)]
\item If there exists a set $S\subseteq V$ such that $d(v)\leq n^{1/4}$ for every $v\notin S$,  and for every graph $G$ with degree sequence $\cD$ one has
$\sum_{u\in N_G[S]}d(u)\leq \alpha n$, 
and $\sum_{u\in V\sm N_G[S]}d(u)(p(d(u)-1)-1)\leq -\mu n$, then
$$
\Pro[L_1(G^\cD_p)\leq \gamma n]=1-o(1)\;.
$$
\item If $\Delta(\cD)\leq n^{1/4}$ and $R_p^{\cD}\geq \mu n$, then
$$
\Pro[L_1(G^\cD_p)\geq \gamma n]=1-o(1)\;.
$$
\end{enumerate}
\end{proposition}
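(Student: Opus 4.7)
The plan is to adapt the BFS exploration analysis developed in~\cite{joos2016how} to the percolation setting, revealing simultaneously the random graph $G^\cD$ and the percolation states of its edges. The key observation is that instead of first sampling $G^\cD$ and then percolating, one may couple the two operations and expose, at the moment an edge slot is queried, both its other endpoint and whether it survives the $p$-percolation.

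\textbf{Step 1: The joint exploration.} Starting from an appropriately chosen seed $v_0$, run a BFS on $G^\cD_p$. Maintain disjoint sets $\cE_t$ (already expanded), $\cB_t$ (reached but not yet expanded), and $\cU_t=V\sm(\cE_t\cup\cB_t)$. For part (i) initialise with $\cE_0:=N_G[S]$ in order to quarantine the few very high-degree vertices away from the exploration. At step $t$ pick $u\in\cB_t$, move $u$ into $\cE_{t+1}$, reveal all edges of $G^\cD$ from $u$ into $\cU_t$, then keep each such edge independently with probability $p$; endpoints of surviving edges enter $\cB_{t+1}$.

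\textbf{Step 2: Switching estimates.} Conditional on the history $\cH_t$ of revealed edges and coin flips, the unrevealed part of $G^\cD$ is uniform over the graphs consistent with $\cH_t$ and the prescribed degrees. A switching argument analogous to the ones in~\cite{joos2016how} shows that, up to controllable errors, each remaining edge slot of $u$ lands in $\cU_t$ on a given vertex $w$ with probability proportional to the residual degree of $w$, and that the number of revealed edges from $u$ into $\cU_t$ is close to $d(u)-1$ (the $-1$ accounting for the edge along which $u$ was reached). Independently percolating each such edge with probability $p$ then gives expected change of the boundary size at this step approximately $p(d(u)-1)-1$.

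\textbf{Step 3: Subcritical drift (part (i)).} The hypothesis $\sum_{u\notin N_G[S]}d(u)(p(d(u)-1)-1)\le -\mu n$ together with the degree bound $d(u)\le n^{1/4}$ off $S$ allows a clean size-biased accounting: the accumulated drift of the BFS from any $v_0\notin N_G[S]$, after $T$ steps, is bounded above by a linear combination of per-vertex contributions totalling at most $-\mu T/\avd$ plus a lower-order term, as long as $T\le\gamma n$. A martingale concentration inequality (Azuma/McDiarmid, with bounded differences of order $n^{1/4}$) shows that the boundary hits zero before $\gamma n$ steps with probability $1-o(1)$. A union bound over possible starting vertices, or equivalently, iterating the BFS on successive residual vertex sets, shows $L_1(G^\cD_p)\le\gamma n$ w.h.p.

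\textbf{Step 4: Supercritical drift (part (ii)).} Here $R^\cD_p\ge\mu n$ and $\Delta(\cD)\le n^{1/4}$. Choose a seed $v_0$ of degree at least the threshold index $j^\cD_p$, so the initial drift is positive. The switching-based drift analysis gives a positive expected change in $|\cB_t|$ throughout the first $\gamma n$ steps (the positive drift persists since we exhaust only a small fraction of the $R^\cD_p$-mass), so a super/sub-martingale comparison with a biased random walk yields a probability bounded away from $0$ that the BFS reaches size $\gamma n$. To upgrade this to ``with high probability'' and to a single giant, run BFS from many disjoint seeds (or sprinkle a second independent $p'$-percolation on the uncovered edges): large components produced by different seeds merge with probability $1-o(1)$ by a standard two-round exposure, giving $L_1(G^\cD_p)\ge \gamma n$ w.h.p.

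\textbf{Main obstacle.} The core difficulty is keeping the switching estimates tight along an entire linear-length exploration, since the conditional distribution of the residual graph drifts as $|\cE_t|$ grows and residual degrees shrink. One must show that the total accumulated error from successive switching approximations is $o(n)$. In part (i) this is made possible by quarantining $N_G[S]$ and the uniform bound $d(u)\le n^{1/4}$; in part (ii) one additionally has to argue that the positive-drift quantity $R^\cD_p$ is not depleted too quickly by the explored vertices, and that two linear components produced by different seeds cannot coexist without merging, which is the second source of difficulty, resolved by a sprinkling argument.
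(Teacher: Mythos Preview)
Your overall architecture---joint exploration of $G^\cD$ and the percolation, switching to control conditional edge probabilities, drift analysis of the boundary, and concentration---is exactly what the paper does, and your treatment of part~(i) matches the paper's closely (quarantine $N_G[S]$, negative drift, bounded differences of order $n^{1/4}$, union bound over starting vertices).

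The substantive divergence is in part~(ii). You propose to obtain only a \emph{uniformly positive} success probability from a well-chosen seed and then boost to $1-o(1)$ via multiple seeds plus sprinkling. The paper avoids this entirely: its exploration process does not terminate when $X_t=0$ but instead restarts at a fresh random vertex (chosen size-biased) and keeps the same martingale $X_t$ running. Lemma~\ref{lem: back edges2} shows the drift stays $\ge\lambda$ in both the $X_t>0$ and $X_t=0$ regimes, so the concentration lemma yields directly that $X_\tau\ge\beta n$ with probability $1-o(1/n)$ for some $\tau\le\rho n$; a final switching lemma (Lemma~\ref{lem: edges to vertices}) converts $\beta n$ free boundary edges into $\gamma n$ vertices in the current component. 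No sprinkling, no merging argument, and in particular no need to choose a special seed (your phrase ``degree at least the threshold index $j^\cD_p$'' conflates an index with a degree and is in any case unnecessary).

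Your sprinkling route is a legitimate alternative in principle, but the sketch has a real gap: you assert that ``large components produced by different seeds merge with probability $1-o(1)$'' without saying why there must be many \emph{unrevealed} edges of $G^\cD$ between two such components. After the first BFS you have conditioned both on a piece of $G^\cD$ and on percolation outcomes of many edges; showing that two linear-sized explored sets are joined by $\Omega(n)$ edges of $G^\cD$ not yet exposed requires a separate switching argument (essentially Lemma~\ref{lem: edges to vertices} again, applied carefully). You would also need Proposition~\ref{prop:imp} to guarantee that after the first exploration the residual sequence still satisfies $R_p^{\cD'}\ge\Omega(n)$, so that subsequent seeds retain positive drift---this is the mechanism behind your sentence ``the positive drift persists since we exhaust only a small fraction of the $R^\cD_p$-mass'', and the paper invokes it explicitly inside Lemma~\ref{lem: exp change lower}. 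With these two ingredients spelled out your approach would go through, but the paper's restart trick is cleaner and shorter.
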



Our last result is a version of Theorem~\ref{thm:rob} for degree sequences that have many edges incident to vertices of unbounded degree. Recall that for $c\in\N$, $W(c,\cD)$ denotes the set of vertices of degree at least 
$c$. 
\begin{proposition}\label{prop:very_rob}
For all $p,\delta,\epsilon\in (0,1)$ and all $\avd\geq 1$, 
there exist $\gamma>0, n_0\geq 1$ such that for every $n\geq n_0$ and every degree sequence $\cD$ on $V$
with $\Td\leq \avd n$ that satisfies
$$
\sum_{i\in W(\log^2 n, \cD)} d_i\geq \epsilon n\;,
$$
we have
$$
\Pro[L_1(G^\cD_p)> \gamma n]\geq 1-\delta\;.
$$
\end{proposition}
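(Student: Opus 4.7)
The plan is to exploit that $S := W(\log^2 n)$ is sublinear in $n$ yet concentrates a constant fraction of the edge-endpoints of $\cD$, so that after percolation $S$ forces the emergence of a linear-order component. From $\Td \leq \avd n$ and $d_v \geq \log^2 n$ on $S$, one has $|S| \leq \avd n/\log^2 n = o(n/\log n)$, while the hypothesis gives $\sigma := \sum_{v \in S} d_v \geq \eps n$, so $\sigma/\Td \geq \eps/\avd =: 2\beta$ is bounded below by a positive constant. As a preliminary reduction, if some $v \in S$ has $d_v \geq \eps^2 n/(100\avd)$, then Chernoff applied to its $p$-retained edges directly yields an $\Omega(n)$-component containing $v$ and we are done; so I may henceforth assume $d_v \leq \eps^2 n/(100\avd)$ uniformly on $S$, which in particular forces $|S|$ to be at least a large constant.

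Using the switching method I would then establish, with probability $1-o(1)$ in $G^\cD$, two structural properties:
\begin{enumerate}[label=(\roman*)]
\item every $v \in S$ has $d_{G^\cD[S]}(v) \geq \beta d_v \geq \beta\log^2 n$;
\item at least $\gamma_0 n$ vertices $w \in V\sm S$ are adjacent in $G^\cD$ to some vertex of $S$, for a constant $\gamma_0 = \gamma_0(\eps,\avd) > 0$.
\end{enumerate}
Item (i) is the switching-method counterpart of the statement that each stub at $v$ pairs with an $S$-stub with probability close to $\sigma/\Td \geq 2\beta$, with Chernoff giving concentration since $d_v \geq \log^2 n$. Item (ii) combines a linear expectation, coming from each $w \in V \sm S$ of positive degree being adjacent to $S$ with probability $\geq 2\beta$, with a switching-based Lipschitz control allowing an application of Lemma~\ref{lem:McDineq}. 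Conditional on (i), after $p$-percolation Chernoff yields that every $v \in S$ retains at least $p\beta\log^2 n/2$ neighbours in $S$ with probability $1-e^{-\Omega(\log^2 n)}$, and a union bound over $v \in S$ absorbs the loss. Thus the minimum degree of $G^\cD_p[S]$ is $\omega(\log|S|)$. A further switching argument bounding pairwise co-degrees within $S$ then shows that $G^\cD_p[S]$ is connected with probability $1-o(1)$. Finally, each of the $\gamma_0 n$ outside-attached vertices retains an $S$-edge under percolation independently with probability $\geq p$, so at least $p\gamma_0 n/2$ of them attach to the unique $G^\cD_p[S]$-component, yielding $L_1(G^\cD_p) \geq \gamma n$ for an appropriate $\gamma = \gamma(\eps,\avd,p)>0$.

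The main obstacle is the connectivity statement for $G^\cD_p[S]$. Although (i) gives a minimum degree well above the classical $\log|S|$ threshold, $G^\cD_p[S]$ is not a $G(m,q)$-type model, and the required co-degree and expansion estimates must be extracted by a carefully designed two-round switching on $S$-stubs, in the spirit of the manipulations underlying Propositions~\ref{prop:imp} and~\ref{prop:explo}. If full connectivity turns out to be too strong to establish cleanly, it suffices to show that $G^\cD_p[S]$ has a connected component of size at least $|S|/2$, a significantly weaker statement obtainable by a BFS exploration analogous to Proposition~\ref{prop:explo}(ii) restricted to $S$, and then to observe via the symmetry of $G^\cD$ within degree classes that a positive fraction of the $\gamma_0 n$ outside-attached vertices connect to this dominant piece; the rest of the deduction is unchanged.
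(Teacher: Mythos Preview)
Your overall plan---show $G_p^\cD[S]$ is connected and then attach a linear number of outside vertices---is the right shape, and closely matches the paper's Case~2. The genuine gap is claim~(i): it is simply false in general, and this breaks the rest of the argument.

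Consider the case where $S$ consists of $C := \lceil 100\avd/\epsilon\rceil$ vertices each of degree $\lfloor \epsilon^2 n/(100\avd)\rfloor - 1$, together with many degree-$1$ vertices. Your preliminary reduction does not eliminate this example (no single vertex has degree $\geq \epsilon^2 n/(100\avd)$), yet $|S| = C = O(1)$, so trivially $d_{G^\cD[S]}(v) \leq |S|-1 = O(1)$ for every $v \in S$, while your claim~(i) asserts $d_{G^\cD[S]}(v)\geq \beta d_v = \Theta(n)$. More generally, whenever a constant fraction of $\sigma$ is carried by vertices of degree $\Theta(n)$ (or even $n^{4/5}$), $|S|$ may be bounded, and neither your minimum-degree claim nor your proposed fallback (a component of size $|S|/2$) can yield connectivity of $G_p^\cD[S]$ with probability $1-o(1)$: after percolation, a clique on $O(1)$ vertices is disconnected with probability bounded away from $0$. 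This is precisely why the proposition only promises probability $1-\delta$, not $1-o(1)$; see Remark~1 in Section~\ref{sec:remarks}.

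The paper resolves this by splitting on whether the very-high-degree set $S_3 := W(n^{4/5})$ carries total degree $\geq \epsilon n/10$. If it does, one shows $G^\cD[S_3]$ is a clique (Lemma~\ref{lem:clique}), so $G_p^\cD[S_3]$ is connected with probability $\geq 1-c^{|S_3|}$ (Corollary~\ref{cor:A}), and this is where the $\delta$ is spent; the linear component is then built through $S_2$ and $T$. If $S_3$ carries little degree, the switching argument for the analogue of your claim~(i) (Lemma~\ref{lem:large min deg}) actually goes through, because now one can control the total degree of the second neighbourhood of a vertex outside $S_1$---this control fails without the assumption that $(D^3_\epsilon)$ does not hold. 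Your proposal needs this case split, and the $\epsilon^2 n/(100\avd)$ cap is not a substitute for it.
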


In the next two sections we proceed with the proof of Theorem~\ref{thm:thres} and~\ref{thm:rob} assuming these three propositions.

\section{Degree sequences with thin tails: proof of Theorem~\ref{thm:thres}} \label{sec:thres}

\begin{proof}[Proof of Theorem~\ref{thm:thres}] 
Let $\cD$ be a degree sequence on $V$.
Recall that $W(c,\cD)=\{i \in V: d_i \geq  c\}$; for convenience, we denote $W(c):=W(c,\cD)$.
We assume that $\eta$ is small enough with respect to $\epsilon,\gamma,1/c_1 $ so that the inequalities 
in this proof hold. 
Next, let $p_\cri:=p_\cri (c_2,\cD)$ be as in~\eqref{eq:crit_prob}.
Note that the definition of $p_\cri$ excludes the contribution of all the vertices of degree at least $c_2$. Moreover, with this definition we have
\begin{align}\label{eq:impli}
\sum_{i\in V\setminus W(c_2)} d_i (p_\cri( d_i-1)-1)\leq 0
\end{align} 
and equality holds if $p_\cri<1$.

We first prove Theorem~\ref{thm:thres}~(i). 
Suppose that $p \leq (1-\epsilon)p_\cri$.
Our strategy is to apply Proposition~\ref{prop:explo}~(i) with $W(c_2)$, $2\eta$ and 
$\epsilon/3$ playing the role of $S$, $\alpha$ and $\mu$, respectively.  
In order to do so, we need to give an upper bound on $\sum_{i\in N[W(c_2)]}d_i$ and on $\sum_{i\in V\sm N[W(c_2)]}d_i(p(d_i-1)-1)$ for every graph $G$ with degree sequence $\cD$.

By assumption, $A_1(\eta,c_2)$ holds; that is,
\begin{equation}\label{eq:ubN[S]}
|N(W(c_2))| \leq \sum_{i\in W(c_2)} d_i\leq\frac{ \eta }{c_2}\cdot n,
\end{equation} 
and therefore
\begin{align}
\sum_{i\in N[W(c_2)]} d_i
&\leq \sum_{i\in W(c_2)} d_i+\sum_{i\in N(W(c_2))} d_i  
\leq \frac{\eta}{c_2}\cdot n + c_2 |N(W(c_2))| 
\leq  (1+c_2)\frac{\eta}{c_2}\cdot n 
\leq 2\eta n\;.\label{eq:ubN[S]2}
\end{align} 

We next bound $\sum_{i\in V\sm N[W(c_2)]} d_i(p(d_i-1)-1)$ from above. 
Since $d_i(p(d_i-1)-1)\geq -d_i$ for every $i\in V$, we obtain
\begin{eqnarray}\label{eq:neigh}
\sum_{i\in V\sm N[W(c_2)]} \!\!\!\! d_i(p(d_i-1)-1) 
&=&\!\!\!\!\sum_{i\in V\sm W(c_2)}\!\!\!\! d_i(p(d_i-1)-1) - \!\!\!\!\sum_{i\in N(W(c_2))} \!\!\!\! d_i(p(d_i-1)-1)\nonumber \\
&\leq &\!\!\!\!\sum_{i\in V\sm W(c_2)}\!\!\!\! d_i(p(d_i-1)-1) + c_2|N(W(c_2))|\nonumber \\
& \stackrel{(\ref{eq:ubN[S]})}{\leq}&\!\!\!\!\sum_{i\in V\sm W(c_2)}\!\!\!\! d_i(p(d_i-1)-1) +\eta n\;.
\end{eqnarray}

It follows that
\begin{align*}
\sum_{i\in V \sm W(c_2)} d_i(p(d_i-1)-1) 
&\leq  (1-\epsilon )\sum_{i\in V \sm  W(c_2)} d_i(p_\cri(d_i-1)-1) - \epsilon\sum_{i\in V\sm W(c_2)} d_i \\
&\stackrel{(\ref{eq:impli})}{\leq } - \epsilon\sum_{i\in V\sm W(c_2)} d_i\;.
\end{align*}
Using that $d_i\geq 1$, we obtain
\begin{equation} \label{eq:NWS} 
\sum_{i\in V\sm W(c_2)} d_i 
= \sum_{i\in V} d_i - \sum_{i\in W(c_2)} d_i 
\geq  n - \sum_{i\in W(c_2)} d_i 
 \stackrel{(\ref{eq:ubN[S]})}{\geq}  \frac{n}{2}.
\end{equation}
Therefore, 
\begin{align*}
\sum_{i\in V \sm W(c_2)} d_i(p(d_i-1)-1)
&\leq  -\frac{\epsilon }{2}\cdot n. 
\end{align*}
Using~\eqref{eq:neigh},  it follows that
\begin{align}
\sum_{i\in V \sm N[W(c_2)]} d_i(p(d_i-1)-1)
&\leq  -\frac{\epsilon }{2}\cdot n + \eta   n \leq -\frac{\epsilon}{3}\cdot n \label{eq:neq_drift4}
\end{align}
As \eqref{eq:ubN[S]2} and \eqref{eq:neq_drift4} hold, 
Proposition~\ref{prop:explo}~(i) completes the proof of Theorem~\ref{thm:thres}~(i).
\medskip

We proceed with the proof of Theorem~\ref{thm:thres}~(ii). 
Suppose now that $p\geq (1+\epsilon )p_\cri$. 
Here we may assume that $p_\cri < 1$ as otherwise $p>1$ does not satisfy the assumption of part~(ii).  
We will first show that there exists $\mu = \mu (\epsilon,\avd,c_1)$ such that $R^{\cD}_p \geq \mu n$.

For $k\in\{1,2\}$, we define
$j_k:=\min\{n+1, i\in [n]:d_i\geq c_k\}$. Since $p_\cri<1$, \eqref{eq:impli} holds with equality. 
Using the definition of $j^\cD_p$, we obtain
\begin{eqnarray}\notag
\sum_{i=j^\cD_p}^{ j_2-1} d_i(p(d_i-1)-1) 
&=& \sum_{i=1}^{ j_2-1} d_i(p(d_i-1)-1) -\sum_{i=1}^{j^\cD_p-1} d_i(p(d_i-1)-1)  \\
&\geq &\sum_{i=1}^{ j_2-1} d_i(p(d_i-1)-1) \notag\\ \notag
 &\geq &(1+\epsilon)\sum_{i=1}^{ j_2-1} d_i(p_\cri(d_i-1)-1)+\epsilon \sum_{i=1}^{ j_2-1} d_i \\ 
&\stackrel{\eqref{eq:impli}}{=} &0+  \epsilon\sum_{i=1}^{ j_2-1} d_i
\stackrel{\eqref{eq:NWS}}{\geq} \frac{\epsilon}{2}\cdot n \;. \label{eq:jj1}
\end{eqnarray}
It follows from $A_2(\epsilon,c_1,c_2)$ that
\begin{align*}
\sum_{i=j^\cD_p}^{j_1-1} d_i(p(d_i-1)-1) 
\stackrel{(\ref{eq:jj1})}{\geq} \frac{\epsilon}{2}\cdot n - \sum_{i=j_1}^{j_2-1} d_i(p(d_i-1)-1)
\geq  \frac{\epsilon}{2}\cdot n- \sum_{i=j_1}^{j_2-1} d_i^2
\geq \frac{\epsilon}{4}\cdot n.
\end{align*}
We conclude that 
$$
R^{\cD}_p = \sum_{i=j^{\cD}_p}^n (p(d_i-1)-1) 
\geq \sum_{i=j^\cD_p}^{ j_1-1} (p(d_i-1)-1)
\geq \frac{1}{c_1} \sum_{i=j^\cD_p}^{ j_1-1} d_i(p(d_i-1)-1) 
\geq \frac{\epsilon}{4 c_1}\cdot n =: \mu n. 
$$

Recall that, by assumption, 
we have $\sum_{i\in W(c_2)} d_i\leq \frac{\eta}{c_2}\cdot n\leq  \eta n$. 
Since we can choose $\eta$ small enough in terms of $\mu$, we can apply Proposition~\ref{prop:imp} with $\eta$ and $W(c_2)$ playing the role of $\nu$ and $S$.
Let $\cD'$ be the random degree sequence of the subgraph $G^\cD[V\sm W(c_2)]$. 
Let $\fD_0$ be the set of degree sequences $\cD_0$  that satisfy $\Pro[\cD'=\cD_0]>0$.
By Proposition~\ref{prop:imp}, we deduce that if $\cD_0\in \fD_0$, then
$R^{\cD_0}_p \geq \mu n/50$ and $|\cD_0|\geq \frac{n}{2}$. 
Moreover, conditional on $\cD'=\cD_0$, we have that $G^\cD[V\sm W(c_2)]$ and $G^{\cD_0}$ have the same probability distribution.
Now we select $\rho$ such that $\eta \ll\rho\ll \mu,1/\avd,p$. 
We can apply Proposition~\ref{prop:explo}~(ii) to the degree sequence $\cD_0$ with $2\rho$ playing the role of $\gamma$, from which Theorem~\ref{thm:thres}~(ii) follows:
\begin{align*}
\Pro[L_1(G^\cD_p)> \rho n]
&\geq \Pro[L_1(G^\cD_p[V\sm W(c_2)])> \rho n] \\
&\geq  \min_{\cD_0\in \fD_0} \Pro[L_1(G_p^{\cD_0}))> 2\rho |\cD_0|] \\
&= 1-\new{o}(1)\;.
\end{align*}
\end{proof}

\section{Robust degree sequences: proof of Theorem~\ref{thm:rob}} \label{sec:rob}


\begin{proof}[Proof of Theorem~\ref{thm:rob}]
Let  $c_0$ and $K$ be such that $1/c_0 \ll 1/K \ll \delta,p,\epsilon, 1/\avd$.
For any given $c\geq c_0$, assume that $n_0$ is such that $1/n_0\ll 1/c$.
Let $n\geq n_0$.

Suppose first that $\sum_{i\in W(\log^2 n)}d_i\geq Kn/c^3$. 
We apply Proposition~\ref{prop:very_rob} with $K/c^3$ playing the role of $\epsilon$ and obtain a $\gamma_1$ such that 
$\Pro[L_1(G^\cD_p)> \gamma_1 n]\geq 1-\delta$.

\medskip

Hence we may assume that $\sum_{i\in W(\log^2 n)}d_i\leq K n/c^3$. 
Let $S:= W(\log^2 n)$. 
We will show that the (random) subgraph $G^\cD_p[V\sm S]$ has a giant component, and thus also $G^\cD_p$ has a giant component. 
Let us first show that $R^\cD_p\geq\frac{Kp}{16c}\cdot n$. We consider two cases:
 
\medskip
\noindent
\textbf{Case 1:} $A_1\left(K,c\right)$ does not hold; that is, $\sum_{i\in W(c)}d_i\geq Kn/c$.
\smallskip

We define $j_1:=\min\{j\in[n]:d_j\geq c\}$ and let 
$j_2$ be the smallest integer $j$ such that $\sum_{i=j_1}^j d_i\geq Kn/(2c)$. 
Since $A_1\left(K,c\right)$ does not hold, $j_1$ and $j_2$ are well-defined. We have
 \begin{align*}
  \sum_{i=1}^{j_2} d_i(p(d_i-1)-1)
  &=p \sum_{i=1}^{j_2} d_i^2 -(1+p)\sum_{i=1}^{j_2}d_i
  \geq p\sum_{i=j_1}^{j_2}d_i^2 - (1+p)\avd n\\
  &\geq cp \sum_{i=j_1}^{j_2} d_i-2\avd n
  \geq \left(\frac{Kp}{2}-2\avd\right)n\;.
 \end{align*}
Therefore, $j^{\cD}_p\leq j_2$.
Since $1/{d_{j_2}}\leq 1/c\leq 1/c_0\ll p$,
we conclude that $p(d_{j}-1)-1\geq  pd_{j}/4$ for all $j\geq j_2$.
By the definition of $j_2$ and using $A_1(K,c)$ (in fact, its negation), it follows that
 $$
 R^\cD_p \geq \sum_{i=j_2}^{n} (p(d_i-1)-1)\geq \frac{p}{4}\sum_{i=j_2}^{n} d_i
\geq  \frac{p}{4}\left(\sum_{i\in W(c)} d_i- \sum_{i=j_1}^{j_2-1} d_i  \right)
\geq\frac{Kp}{16c}\cdot n
\;.
 $$

\medskip
\noindent
\textbf{Case 2:} $A_2\left(K,0,c\right)$ does not hold; that is, $\sum_{i\in V\sm W(c)} d_i^2 \geq {K}n/{4}$.
\smallskip

Now let $j_3$ be the smallest integer $j$ such that $\sum_{i=1}^j d_{i}^2\geq {K}n/{8}$. 
Since $A_2\left(K,0,c\right)$ does not hold, $j_3$ is well-defined and $d_{j_3}< c$. 
Using the definition of $j^\cD_p$, similarly as before
 \begin{align*}
 \sum_{j=j^\cD_p}^{j_3} d_j(p(d_j-1)-1)
 &\geq  \sum_{j=1}^{j_3} d_j(p(d_j-1)-1)\\
  &\geq p\sum_{j=1}^{j_3} d_j^2 - (1+p)\avd n\\
  &\geq \left(\frac{Kp}{8}-2\avd\right)n
  >\frac{Kp}{16}\cdot n\;.
 \end{align*}
Thus  
 $$
 R^\cD_p 
\geq \sum_{j=j^\cD_p}^{j_3} (p(d_j-1)-1)
> \frac{1}{c} \sum_{j=j^\cD_p}^{j_3} d_j(p(d_j-1)-1)
\geq  \frac{Kp}{16c}\cdot n
\;.
 $$
 
%

Let $\cD'$ be the random degree sequence of the subgraph $G^\cD[V\sm S]$. Let $\fD_0$ be the set of degree sequences $\cD_0$  that satisfy $\Pro[\cD'=\cD_0]>0$.
By Proposition~\ref{prop:imp} applied to $S$ with $K/c^3$ and $Kpn/(16c)$ playing the role of $\nu$ and $\mu$ (note that $400\nu \leq \mu$), 
we obtain that, for every $\cD_0\in \fD_0$, one has $R_p^{\cD_0}\geq  {Kpn}/{(800c)}$ and $|\cD_0|\geq \frac{n}{2}$.  
Moreover, conditional on $\cD'=\cD_0$, we have that $G^\cD[V\sm S]$ and $G^{\cD_0}$ have the same probability distribution. 
Using that  for every $\cD_0\in \fD_0$,  $\Delta(\cD_0)\leq \log^2{n}$ and $R_p^{\cD_0}\geq {Kpn}/{(800c)}$, we can apply Proposition~\ref{prop:explo}~(ii) and obtain $\gamma_2>0$
such that
\begin{align*}
\Pro[L_1(G^\cD_p)> \gamma_2 n]
&\geq \Pro[L_1(G^\cD_p[V\sm S])> \gamma_2 n] \\
&\geq  \min_{\cD_0\in \fD_0} \Pro[L_1(G_p^{\cD_0}))> 2\gamma_2 |\cD_0|] \\
&\geq 1-\new{o}(1)\;.
\end{align*}
We conclude the proof of the theorem by setting $\gamma:=\min\{\gamma_1,\gamma_2\}$.
\end{proof}

\section{Proof of Proposition~\ref{prop:imp}}\label{sec:det_lem}

Although the statement of Proposition~\ref{prop:imp} may sound very natural and also easy to prove,
the fact that some edge deletions may cause significant reordering in ordered degree sequences
makes the proof technical and complex.

\begin{proof}[Proof of Proposition~\ref{prop:imp}]

We start with the last part of the proposition. By exposing $\cD'$, we only fix the degree of each vertex in $V\setminus S$ towards $S$. Take any realisation $G'$ of a graph on $V\setminus S$ with degree sequence $\cD'$. The number of extensions of $G'$ to a graph on $V$ with degree sequence $\cD$ is clearly independent from the structure of $G'$. That is, for any choice of $G'$ there is the same number of extensions, and the law of $G^{\cD}[V\setminus S]$ coincides with that of $G^{\cD'}$.

It remains to prove the main part. For every $k\in [n]$,
let $d_k'$ be the degree of the vertex $k$ in $G'$
and define $r_k:=d_k-d_k'$.
Note that $r_k=d_k$ for every $k\in S$ whereby
$$
\sum_{k\in V} r_k\leq 2\nu n\;.
$$
Clearly, by deleting at most $\sum_{k\in S}d_k\leq  \nu n$ edges,
we have created at most $2 \nu n$ vertices of degree $0$, which we do not consider in  $\cD'$.
Let $n'$ be the number of vertices with positive degree after the deletion of $S$.
Thus $n'\geq (1-2\nu)n$.

Note that the statement follows if $\Delta(G)\geq \mu n/(40p)$, since then a vertex of maximum degree is not contained in $S$ and has degree at least $\Delta(G)-\nu n$ in $\cD'$, whereby
$$
R_p^{\cD'}\geq p((\Delta(G)-\nu n)-1)-1 \geq \frac{\mu }{50}\cdot n\geq \frac{\mu n'}{50}\;,
$$
where we used that $400\nu \leq \mu$. 
Thus, we may now assume that $\Delta(G) <  {\mu n}/(40p)$.

We define $f_k:=p(d_k-1)-1$ and $f_k':=p(d_k'-1)-1$ for all $i\in [n]$.
Recall that $d_1\leq \dots \leq d_n$ and that $j^\cD_p$ is the smallest integer $j$ such that
$$
\sum_{k=1}^{j} d_k ( p(d_k - 1)-1) = \sum_{k=1}^{j} d_kf_k> 0\;.
$$
Let $A:=\{ k \in [n]: k< j^\cD_p \}$ and $B:=[n]\sm A$. 
Observe that the $f_k$s are increasing with respect to $k$, and thus $k\in B$ implies $f_k>0$.
Let $\sigma$ be a permutation such that $d_{\sigma_1}'\leq \ldots \leq d_{\sigma_n}'$
where we set $d_k':=0$ if vertex $k$ is deleted.
Note that $d_k'f_k'=0$ if $d_k'=0$.
Thus $R^{\cD'}_p$ does not change if we add isolated vertices to $\cD'$.
For the sake of simplicity, we consider $\cD'$ to be a degree sequence on $[n]$ with isolated vertices.
Let $A':=\{k\in [n]\colon\sigma_k< j^p_{\cD'} \}$ and $B':=[n]\sm A'$.

Let $T:=\{k\in B: f_k'\geq f_k/2\}$.
Observe that if we delete an edge $ij$ from $G$,
then $f_i$ and $f_j$ decrease by $p$, respectively.
Thus 
\begin{align*}
	\sum_{k\in B\sm T}f_k \leq2\sum_{k\in B\sm T}(f_k-f_k') = 2p \sum_{k\in B\sm T}r_k  \leq  4p \nu n.
\end{align*}
Hence
\begin{align}\label{eq:T}
	\sum_{k\in T}f_k =  R^\cD_p- \sum_{k\in B\sm T}f_k \geq \mu n - 4p \nu n 
	\geq \frac{4\mu}{5}n\;.
\end{align} 
Suppose  $k\in T$.
Then,
\begin{align*}
	d_k' = \frac{1+f'_k}{p}+1\geq \frac{1+f_k/2}{p}+1=\frac{1+f_k}{2p}+\frac{1}{2p}+1 =
	 \frac{d_k}{2}+ \frac{1}{2p} +\frac{1}{2}
	\geq \frac{d_k}{2}\;.
\end{align*}
We define $c:=d_{j^\cD_p}$.
It follows that,
\begin{align}\label{eq:T1}
	\sum_{k\in T}d_k'f_k' 
	\geq \frac{1}{4}\sum_{k\in T}d_kf_k
	\geq \frac{c}{4} \sum_{k\in T}f_k
	\stackrel{(\ref{eq:T})}{\geq } \frac{c\mu}{5}n\;.
\end{align} 
Let $T_1\subseteq T$ be such that
\begin{align}\label{eq:T1}
	\sum_{k\in T_1} d_k'f_k'> \frac{c \mu}{20}n\;,
\end{align}
and $\max\{\sigma_k:\, k \in T_1\}$ is minimized
 (choose the set of consecutive vertices in $T$ smallest with respect to the order ${\sigma_1},\ldots,{\sigma_n}$).
By~\eqref{eq:T1} such a set exists.
Let $k_{max}=\arg\max\{\sigma_k:\, k\in T_1\}$. 
So the above definition implies that 
$$ \sum_{k\in T_1\sm\{k_{max}\}} d_k'f_k' \leq \frac{c \mu}{20}n\;.$$
Observe  also that $f_{k_{max}}'\leq p\Delta(G)\leq \mu n/ 40$ and that 
$d'_k\geq d_k/2\geq c/2$ for each $k \in T_1$ (whereby $\frac{2}{c}d_k' \geq 1$).
We conclude that 
\begin{eqnarray}
	\sum_{k\in T\sm T_1} f_k'\notag
	&= &\sum_{k\in T} f_k'-\sum_{k\in T_1\sm\{k_{max}\}} f_k' - f_{k_{max}}'\\ \notag
	&\geq& \frac{1}{2} \sum_{k\in T} f_k- \frac{2}{c}\sum_{k\in T_1\sm\{k_{max}\}} d_k'f_k' - f_{k_{max}}'\\ 
	&\stackrel{\eqref{eq:T}}{\geq} &\frac{2\mu }{5}n - \frac{\mu }{10}n -  \frac{\mu }{40}n 
	\geq \frac{\mu }{4}n\;. \label{eq:ToutT|}
\end{eqnarray}

Let $B_1:=\{k\in B: f_k'<0\}$ and note that $B_1\subseteq B\setminus T$ and $B_1\subseteq A'$.

Recall that $d'_k=d_k-r_k$ and that $f'_k=f_k-pr_k$. By the definition of $A$ and $c$, we observe that 
$\sum_{k\in A} d_k f_k \geq -c(p(c-1)-1)$. Thus
\begin{align*}
	\sum_{k\in A} d_k'f_k'& \geq  \sum_{k\in A} d_kf_k - \sum_{k\in A} r_k(f_k+p d_k)\\
	&\geq -c(p(c-1)-1) -\sum_{k\in A} r_k(p(2c-1)-1) \\
	&\geq  - pc^2 - 4 c p \nu n\;.
\end{align*}
If $k\in B_1$, then $f_k>0$ and from $f'_k<0$, we obtain $d_k< {1}/{p}+r_k+1$.
Thus
\begin{align*}
	\sum_{k\in B_1} d_k'f_k'& =  \sum_{k\in B_1} \left( (d_k - r_k)f_k- pr_k(d_k -r_k) \right) \\
	&\geq  -\sum_{k\in B_1} pr_k(d_k -r_k) \\
	&>  -\sum_{k\in B_1} r_k(1+p)\\
	&\geq - 2(1+p)\nu n
	\geq - 4cp\nu n\;,
\end{align*}
where we used that $1/c\leq p$ in the last line.
Since $A$ and $B_1$ are disjoint, we deduce that
\begin{align*}
	\sum_{k\in A \cup B_1} d_k'f_k'	&\geq  - pc^2 - 8 c \nu pn\;.
\end{align*}
%
%
Let $T_2\subseteq T$ be such that
\begin{align*}
	\sum_{k\in T_2} d_k'f_k'> pc^2 + 8 c \nu pn\;,
\end{align*}
and $\max\{\sigma_k:\, k \in T_2\}$ is minimized (choose the set of consecutive vertices in $T$ smallest with respect to the order ${\sigma_1},\ldots,{\sigma_n}$).
As $\Delta(G)\leq \mu n/(40p)$, we conclude that $pc^2 \leq c \mu n/40$.
Since $8 c \nu pn \leq c \mu n/40$ by hypothesis, using~\eqref{eq:T1} we conclude that $T_2$ exists. Note also that $T_2\subseteq T_1$.
Therefore, by using \eqref{eq:ToutT|}, we obtain
\begin{align}\label{eq:large_T_T_2}
\sum_{k\in T\sm T_2} f_k'\geq \frac{\mu n}{4}\;.
\end{align}
Since $A\cup B_1$ and $T_2$ are disjoint, we conclude that 
\begin{align}\label{eq:no_ideas}
\sum_{k\in A \cup B_1 \cup T_2} d_k'f_k'>0\;.
\end{align}
The previous inequality suggests that the vertices in $T\setminus T_2$ might belong to $B'$ and thus contribute to $R_p^{\cD'}$.
However, in the new ordering $\sigma$, there might be vertices in $A$ with larger degree than some of the vertices in $T\setminus T_2$.
Let $P:=(T \setminus T_2)\cap A'$.
If $\sum_{k\in P}f_k'\leq \frac{\mu n}{8}$, 
then~\eqref{eq:large_T_T_2} implies
$$
R_p^{\cD'}\geq \sum_{k\in (T\setminus T_2)\cap B'}f_k'\geq \frac{\mu n}{4}- \frac{\mu n}{8}= \frac{\mu n}{8}\geq \frac{\mu n'}{8}\;,
$$
and we are done.

Hence, we may assume $\sum_{k\in P}f_k' > \frac{\mu n}{8}$.
Recall that, since $P\subseteq T$,
we have $d_k'\geq d_k/2 \geq c/2$ for every $k\in P$
and hence, by~\eqref{eq:no_ideas},
\begin{align}\label{eq:eqq}
	\sum_{k\in A \cup B_1 \cup T_2 \cup P} d_k'f_k'
	> \frac{c\mu n}{16} \;.
\end{align}
Thus a significant amount of vertices in $A \cup B_1 \cup T_2 \cup P$ need to be in $B'$. Let $Q:=(A \cup B_1 \cup T_2 \cup P) \cap B'$. Since $f'_k<0$ only for $k\in A\cup B_1$, and $f'_k>0$ all $k\in B'$,~\eqref{eq:eqq} implies
\begin{align}\label{eq:eqq2}
	\sum_{k\in Q} d_k'f_k'
	> \frac{c\mu n}{16} \;.
\end{align}
Observe that $B_1\cup P\subseteq A'$.
By our choice of $T_2$, the degree of a vertex in $T_2$ in $G'$ is at most the degree of a vertex in $P$ in $G'$; 
that is, vertices in $T_2$ are smaller than vertices in $P$ with respect to the ordering $\sigma$.
Thus if a vertex of $T_2$ is contained in $Q$, then $P=\emptyset$ and this a contradiction to our assumption.
Therefore, $Q=A \cap B'$ and hence $d_k'\leq d_k\leq c$ for each $k\in Q$.
Using~\eqref{eq:eqq2} we obtain,
\begin{align*}
	R_p^{\cD'} \geq \sum_{k\in B'} f_k' 
\geq \sum_{k\in Q} f_k' 
\geq \frac{1}{c}\sum_{k\in Q} d_k' f_k' 
	\geq \frac{1}{c}\cdot \frac{c\mu }{16}n
	\geq \frac{\mu }{16}n\;.
\end{align*}
This completes the proof.
\end{proof}

\section{Proof of Proposition~\ref{prop:explo}}\label{sec:explo}

Let $G^\cD$ be a graph chosen according to the uniform distribution on $\cG^\cD$. 
The proof of Proposition~\ref{prop:explo} will consist of a careful analysis of an exploration of the different components of $G^\cD_p$ and 
will heavily rely on the switching method. 
Our proofs are similar in spirit to those in~\cite{joos2016how}, but the additional  
level of randomness that is due to bond percolation makes the arguments more involved
and additional arguments are needed.

In order to bound the number of switches it is more convenient to denote by $d(u)$ the degree of a vertex $u\in V$, instead of using $d_i$ for $i\in V$. We will use this notation in this and in the next section.

\subsection{Connection probabilities via the switching method}

The following three technical lemmas 
provide the necessary tools 
needed for proving that the random exploration process follows closely to what we expect it to do.
To prove these lemmas we make extensive use of the switching method and, in particular,  of inequality~(\ref{eq:fund}).

\begin{lemma}\label{lem: vertices adjacent}
Suppose $n\in \N$ and $1/n \ll 1$ and let  $Z'\subseteq V$. 
Suppose $H'$ is a graph with vertex set $Z'$ and
$F'$ is a bipartite graph with vertex partition $(Z',V\sm Z')$. 
Suppose $u\in Z'$ and $v\in V\sm Z'$ such that  $uv\notin E(F')$.
Suppose $\cD$ is a degree sequence on $V$ such that 
\begin{enumerate}[label={\rm (E\arabic*)}]
\item\label{item:E1} $d(u)\leq n^{1/4}$,
\item\label{item:E2} if $w\in V$ and $d(w)> n^{1/4}$, then $w \in Z'$ and $d(w)=d_{H'}(w)$, and
\item\label{item:E3} $\sum_{w\in  V\sm Z'}(d(w)-d_{F'}(w)) \geq n/20$.
\end{enumerate}
Then,
\begin{align*}
	\Pro[uv\in E(G^\cD)\mid G^\cD[Z']=H',\, F'\subseteq G^\cD]\leq 30 n^{-1/2}.
\end{align*}
\end{lemma}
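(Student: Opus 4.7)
The strategy is the switching method via equation~\eqref{eq:fund}. Let $\cE := \{G^\cD[Z'] = H',\, F' \subseteq G^\cD\}$ and partition $\cE = \cF \sqcup \cF'$ according to whether $uv \in E(G^\cD)$. Then $\Pro[uv \in E(G^\cD) \mid \cE] = |\cF|/(|\cF|+|\cF'|) \leq |\cF|/|\cF'| = d(\cF'\to\cF)/d(\cF\to\cF')$, so it suffices to show $d(\cF'\to\cF) \leq n^{1/2}$ and $d(\cF\to\cF') \geq n/40$. For the upper bound, any switch $G' \to G$ with $G' \in \cF'$ and $G \in \cF$ must add the edge $uv$, hence is a $\{ux,vy\}$-switch with $x \in N_{G'}(u)$ and $y \in N_{G'}(v)$. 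By condition (E1) and (since $v \in V\sm Z'$) the contrapositive of (E2), both $d(u)$ and $d(v)$ are at most $n^{1/4}$, giving at most $n^{1/2}$ such ordered pairs.

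For the lower bound, fix $G \in \cF$ and count $\{uv,xy\}$-switches producing a graph in $\cF'$, restricted to $x \in V\sm Z'$. This restriction automatically ensures $G'[Z'] = H'$ after the switch: both the added edge $ux$ and the removed edge $xy$ have an endpoint outside $Z'$, so no edges within $Z'$ are altered, and $F' \subseteq G$ is preserved since $uv \notin E(F')$ by hypothesis and $xy \notin E(F')$ will be enforced. The remaining validity conditions are $x \neq u,v$, $ux \notin E(G)$, and for $y$: $y \in N_G(x)$, $y \neq v$, $vy \notin E(G)$, and $xy \notin E(F')$. For each such $x$, the number of admissible $y$ is at least $d(x) - d_{F'}(x) - |N_G(x) \cap (N_G(v) \cup \{v\})|$, and the main term $\sum_{x \in V\sm Z'}(d(x) - d_{F'}(x)) \geq n/20$ follows directly from~(E3). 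The excluded choices $x \in \{u,v\} \cup N_G(u)$ contribute at most $(n^{1/4}+2) \cdot n^{1/4}$, since each such $x \in V\sm Z'$ has $d(x) \leq n^{1/4}$ by (E2).

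The delicate step, which I expect to be the main obstacle, is the correction $\sum_{x \in V\sm Z'} |N_G(x) \cap (N_G(v) \cup \{v\})|$: a per-$x$ bound of $n^{1/4}$ would sum to $O(n^{5/4})$ and ruin the estimate against the main term of size $n/20$. The trick is to swap the order of summation to obtain $\sum_{y \in N_G(v) \cup \{v\}} |N_G(y) \cap (V\sm Z')|$, and then invoke (E2) uniformly: if $d(y) > n^{1/4}$ then $y \in Z'$ with $d(y) = d_{H'}(y)$, so $y$ has no neighbour in $V\sm Z'$; if $d(y) \leq n^{1/4}$ the inner term is trivially at most $d(y) \leq n^{1/4}$. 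The total correction is thus at most $(d(v)+1) \cdot n^{1/4} = O(n^{1/2})$, yielding $d(\cF\to\cF') \geq n/20 - O(n^{1/2}) \geq n/40$ for $n$ sufficiently large. Combined with the upper bound, this gives $\Pro[uv \in E(G^\cD) \mid \cE] \leq 40 n^{-1/2}$, as required.
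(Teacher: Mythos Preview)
Your proof is correct and follows essentially the same switching argument as the paper: the same partition into $\cF$ and $\cF'$, the same upper bound $d(u)d(v)\leq n^{1/2}$ on switches creating $uv$, and the same lower bound on switches destroying $uv$ by counting edges $xy$ with $x\in V\sm Z'$ and using (E2) to control the degrees of vertices in $N_G[v]$. Your explicit order-swap in the correction term is exactly the paper's observation that neighbours of $v$ (and of $u$) either have degree at most $n^{1/4}$ or have all their edges inside $Z'$, yielding an $O(n^{1/2})$ correction; the paper phrases this as ``at most $2n^{1/2}$ edges at distance at most $1$ from $uv$ with at least one endpoint in $V\sm Z'$'' and obtains the slightly sharper $n/30$ in place of your $n/40$, but both suffice for the stated bound.
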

\begin{proof}
Let $\cF^+$ be the set of graphs with degree sequence $\cD$ such that $G[Z']=H'$, $F'\subseteq G$ and $uv\in E(G)$,
and let $\cF^-$ be the set of graphs with degree sequence $\cD$ such that $G[Z']=H'$, $F'\subseteq G$ and $uv\notin E(G)$. 
We will only perform switches that involve edges that are not contained in $E(H')\cup E(F')$. 
This ensures that the graph $G_0$ obtained from a switch also satisfies $G_0[Z']=H'$ and $F'\subseteq G_0$. 
As this is the first proof that involves the switching method, we will provide an extra level of detail. 

For every $G\in \cF^+$, 
let $s^+(G)$ be the number of switches that transform $G$ into a graph in $\cF^-$. 
We seek for a  lower bound on $s^+(G)$. 
Indeed, we will find many edges $xy$ such that the $\{uv,xy\}$-switch leads to a graph in $\cF^-$. 
For this, it suffices to select an edge $xy$ such that $xy$ is at distance at least $2$ from $uv$, we have $xy\notin E(F')$, and $x\in V\sm Z'$. 
By~\ref{item:E3}, there are at least $n/20$ edges that have one endpoint in $V\sm Z'$ and are not contained in $E(F')$. 
Therefore, it suffices to count how many of them lie at distance at most $1$ from $uv$. Note that $d(u),d(v)\leq n^{1/4}$. 
Moreover, $v$ has no neighbour with degree larger than $n^{1/4}$. 
While $u$ can have neighbours $w\in Z'$ with degree larger than $n^{1/4}$, all the edges incident to $w$ have both endpoints in $Z'$ (by \ref{item:E2}). 
It follows, that there are at most $2n^{1/2}$ edges at distance at most $1$ from $uv$ with at least one endpoint in $V\sm Z'$. 
Note that for any such $xy$, 
the $\{uv,xy\}$-switch transforms $G$ into a simple graph $G_0$ with degree sequence $\cD$,  $G_0[Z']=H'$, $F'\subseteq G_0$, and $uv\notin E(G_0)$. Therefore, 
$$
s^+(G)\geq \frac{n}{20}-2n^{1/2} \geq \frac{n}{30}\;.
$$

For every $G\in \cF^-$, let $s^-(G)$ be the number of switches that transform $G$ into a graph in~$\cF^+$. 
We bound $s^-(G)$ from above. 
Clearly, any such switch is of the form $\{ux,vy\}$ for some $x,y\in V$. Since $d(u),d(v)\leq n^{1/4}$, 
there are at most $d(u)d(v)\le n^{1/2}$ choices for the edges $ux$ and $vy$.
Therefore,
$$
s^-(G)\leq n^{1/2}\;.
$$
Using~\eqref{eq:fund} we obtain
\begin{align*}
	\Pro[uv\in E(G^\cD) \mid G^\cD[Z']=H',\, F'\subseteq G^\cD]
	&\leq \frac{s^-(G)}{s^+(G)}\cdot\Pro[uv\notin E(G^\cD) \mid G^\cD[Z']=H',\, F'\subseteq G^\cD]\\
	&\leq \frac{30n^{1/2}}{n}\cdot \Pro[uv\notin E(G^\cD)\mid G^\cD[Z']=H',\, F'\subseteq G^\cD]\\
	&\leq 30 n^{-1/2}\;.
\end{align*}
\end{proof}

For a graph $G$, a vertex $v$ and set of vertices $S$ in $G$, we write $d_G(v,S)$ for the number of neighbours of $v$ in $S$.

\begin{lemma}\label{lem: back edges}
Suppose $n\in \N$ and $1/n\ll \nu\ll 1$. 
Suppose  $Z'\subseteq V$. 
Suppose $H'$ is a graph with vertex set $Z'$, and $F'$ is a bipartite graph with vertex partition $(Z',V\sm Z')$. 
Suppose $x\in V\sm Z'$  and $z\in Z'$.
Suppose $\cD$ is a degree sequence on $V$ such that
\begin{itemize}
\item[-]~if $w\in V$ and $d(w)> n^{1/4}$, then $w \in Z'$ and $d(w)=d_{H'}(w)$,
\item[-] $\sum_{w\in Z'} (d(w)-d_{H'}(w)-d_{F'}(w)) \leq \nu n$, 
\item[-] $\sum_{w\in V\sm Z'}(d(w)-d_{F'}(w)) \geq n/20$, and
\item[-] $xz\notin E(F')$,
\end{itemize}
Then, for every $i\geq 0$ and $Z'':=Z'\sm \{z\}$,
\begin{align*}
	\Pro[d_{G^\cD}(x,Z'')-d_{F'}(x)> \lfloor \sqrt{\nu}(d(x)-d_{F'}(x))\rfloor +i \mid G^\cD[Z']=H',\; F'\subseteq G^\cD,\; \cE]\leq (22\sqrt{\nu})^{i+1}\;,
\end{align*}
where $\cE\in \{\{ xz\in E(G^\cD) \},\{ xz\notin E(G^\cD) \}\}$. 
Therefore, by averaging, we also have
\begin{align*}
	\Pro[d_{G^\cD}(x,Z'')-d_{F'}(x)> \lfloor \sqrt{\nu}(d(x)-d_{F'}(x))\rfloor +i \mid G^\cD[Z']=H',\; F'\subseteq G^\cD]\leq (22\sqrt{\nu})^{i+1}\;.
\end{align*}

\end{lemma}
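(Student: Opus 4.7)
Given the fixed data $Z', H', F', x, z$, let $k:=\lfloor \sqrt{\nu}(d(x)-d_{F'}(x))\rfloor$ and, for each $j\geq 0$, let $\cF_j$ denote the set of graphs $G$ with degree sequence $\cD$ satisfying $G[Z']=H'$, $F'\subseteq G$, $\cE$, and $d_G(x,Z'')-d_{F'}(x)=j$. The goal is to prove the geometric bound $\Pro[\cF_{j+1}]\leq 22\sqrt{\nu}\cdot \Pro[\cF_j]$ for every $j\geq k$; summing the resulting geometric series then yields the claimed tail bound $\Pro[\bigcup_{j>k+i}\cF_j]\leq (22\sqrt{\nu})^{i+1}$.

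The switch I will use takes a graph $G\in \cF_{j+1}$, one of the $j+1$ ``back edges'' $xy$ (so $y\in Z''$ and $xy\notin E(F')$), and an auxiliary edge $x'y'\in E(G)\sm E(F')$ whose two endpoints both lie in $V\sm Z'$, and applies the $\{xy,x'y'\}$-switch to produce $xx'$ and $yy'$. Since $x,x',y'\in V\sm Z'$ and $y\in Z''$, the operation preserves $G[Z']=H'$ and $F'\subseteq G$, and because $y\in Z''$ and $x'\in V\sm Z'$ automatically exclude the value $z$, it also preserves the status of the edge $xz$; hence the target graph lies in $\cF_j$.

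For the forward count $s^+(G)$, the pool of auxiliary edges has size $\Omega(n)$: the hypothesis $\sum_{w\in V\sm Z'}(d(w)-d_{F'}(w))\geq n/20$ combined with $\sum_{w\in Z'}(d(w)-d_{H'}(w)-d_{F'}(w))\leq \nu n$ forces at most $\nu n$ non-$F'$ edges to cross between $Z'$ and $V\sm Z'$, leaving at least $n/50$ edges with both endpoints in $V\sm Z'$. From this one subtracts the $O(\sqrt n)$ such edges incident to $N_G(\{x,y\})\cap(V\sm Z')$; this intersection has size at most $2n^{1/4}$, since every vertex of $V\sm Z'$ has degree at most $n^{1/4}$, and every $V\sm Z'$-neighbour of the possibly high-degree $y$ also has degree at most $n^{1/4}$ (by the hypothesis that all high-degree vertices lie in $Z'$ with edges entirely in $H'$). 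Thus $s^+(G)\geq (j+1)\,\Omega(n)$. For the reverse count $s^-(G)$ on $G\in \cF_j$, any switch that creates a new back edge must use $xx'\in E(G)\sm E(F')$ with $x'\in V\sm Z'$ (at most $d(x)-d_{F'}(x)$ choices) and $yy'\in E(G)\sm E(F')$ with $y\in Z''$, $y'\in V\sm Z'$ (at most $\nu n$ edges by the boundary bound), giving $s^-(G)\leq(d(x)-d_{F'}(x))\cdot\nu n$. Substituting into~\eqref{eq:fund},
\begin{align*}
\frac{\Pro[\cF_{j+1}]}{\Pro[\cF_j]}\leq\frac{s^-(G)}{s^+(G)}=O\!\left(\frac{\nu(d(x)-d_{F'}(x))}{j+1}\right),
\end{align*}
and for $j\geq k$ the right-hand side is $O(\sqrt{\nu})$, which is the desired decay.

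The main obstacle I anticipate is careful bookkeeping: every candidate switch must simultaneously preserve $G[Z']=H'$, $F'\subseteq G$, $\cE$, and simplicity of the graph, and pinning the explicit constant at $22$ rather than the rough value $100$ that my calculation yields will require a slightly tighter accounting (for instance by counting ordered half-edges or by using that each unordered auxiliary edge contributes two ordered switches). The unconditional statement then follows from the conditional one by averaging over $\cE$ and its complement, since the bound holds for both values of $\cE$.
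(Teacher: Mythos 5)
Your proposal is correct and follows essentially the same route as the paper: you compare $\Pr[\cF_{j+1}]$ with $\Pr[\cF_j]$ via switches that pair a back edge from $x$ to $Z''$ with an auxiliary edge lying entirely inside $V\sm Z'$, use the boundary bound $\sum_{w\in Z'}(d(w)-d_{H'}(w)-d_{F'}(w))\leq\nu n$ to cap the reverse count, use $\sum_{w\in V\sm Z'}(d(w)-d_{F'}(w))\geq n/20$ minus a negligible correction to lower-bound the forward count, and observe (as the paper does implicitly) that both $x$ and the relevant $(V\sm Z')$-neighbours involved in the switch have degree at most $n^{1/4}$, so only $O(n^{1/2})$ auxiliary edges are disqualified. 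One small point beyond the constant-tracking you already flag: summing the geometric tail actually yields $\sum_{j>k+i}\Pr[\cF_j]\leq r^{i+1}/(1-r)$ where $r$ is the per-step ratio, so if your ratio bound were exactly $22\sqrt{\nu}$ the statement would not quite follow; the paper obtains $21\sqrt{\nu}$ per step and uses the slack (and $\nu\ll1$) to absorb the $(1-21\sqrt{\nu})^{-1}$ factor into the $22$. This is only a matter of constants, not a gap, but it is worth knowing that the extra unit between $21$ and $22$ is precisely what absorbs the geometric-series factor.
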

\begin{proof}
Let $K:=\lfloor\sqrt{\nu}(d(x)-d_{F'}(x))\rfloor$. For every $k\geq K$, let $\cF_k = \cF_k (\cE)$ be the set of graphs $G$ with degree sequence $\cD$ such that $G[Z']=H'$, $F'\subseteq G$, 
$d_G(x,Z'')-d_{F'}(x)=k$, and $\cE$ is satisfied. 
As before, we will only perform switches using edges that are not contained in  $E(H')\cup E(F')$.

Consider a graph in $\cF_k$. 
Then in any of the two possibilities for $\cE$, there are at most $(d(x)-d_{F'}(x))\nu n$ switches that lead to a graph in $\cF_{k+1}$.

For every graph in $\cF_{k+1}$, arguing similar as in Lemma~\ref{lem: vertices adjacent}, there are at least $(k+1)({n}/{20}-\nu n- 2n^{1/2})\geq {(k+1)n}/{21}$ switches that lead to a graph in $\cF_k$. 
This is the number of pairs of edges where one element is among the $k+1$ edges between $x$ and $Z''$ and which are not contained in $E(F')$,
and the other element is among the edges with both endpoints in $V\sm Z'$ (at least ${n}/{20}-\nu n - 2n^{1/2}$) which are at distance at least 2 from 
the endpoints of the first element. 

Thus, for $k\geq K$, we obtain
\begin{align*}
	\Pro[\cF_{k+1}]\leq \frac{21(d(x)-d_{F'}(x))\nu n}{(k+1) n}\Pro[\cF_{k}] \leq 22\sqrt{\nu}\Pro[\cF_{k}]\;,
\end{align*}
which implies that
\begin{align*}
	\Pro[d_{G^\cD}(x,Z'')-d_{F'}(x)> K+i \mid  G^\cD[Z']=H',\; F'\subseteq G^\cD,\cE]\leq (22\sqrt{\nu})^{i+1}\;.
\end{align*}
\end{proof}

\begin{lemma}\label{lem: prob next}
Suppose $n\in \N$ and $1/n \ll \nu \ll 1$.
Suppose $Z\subseteq V$. 
Suppose $H$ is a graph with vertex set $Z$ and $F$ is a bipartite graph with vertex partition $(Z,V\sm Z)$. 
Suppose $z \in Z$ and $x\in V\sm Z$ such that $xz\notin E(F)$.
Suppose $\cD$ is a degree sequence on $V$ (write $\hat{d}(u)=d(u)-d_H(u)-d_F(u)$ for all $u\in V$) such that
\begin{itemize}
\item[-]~if $w\in V$ and $d(w)> n^{1/4}$, then $w \in Z$ and $d(w)=d_H(w)$,
\item[-] $\sum_{w\in Z} \hat{d}(w) \leq \nu n$, and
\item[-] $M:=\sum_{w\in V\sm Z}\hat{d}(w) \geq n/10$.
\end{itemize}
\noindent Then,
\begin{align*}
	\Pro[xz \in E(G^\cD) \mid G^\cD[Z]=H,\, F\subseteq G^\cD  ]= \frac{\hat{d}(x)\hat{d}(z)}{M}(1\pm 25\sqrt{\nu}).
\end{align*}
\end{lemma}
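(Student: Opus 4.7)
My plan is to use the switching method to estimate $\Pro[\cF^+]/\Pro[\cF^-]$, where $\cF^+$ and $\cF^-$ denote respectively the sub-events of $\cE_0:=\{G^\cD[Z]=H,\,F\subseteq G^\cD\}$ on which $xz\in E(G^\cD)$ and $xz\notin E(G^\cD)$. Throughout I would only consider switches whose four participating edges lie outside $E(H)\cup E(F)$, so that both endpoints of the switch remain in $\cE_0$. Observe first that if $d(z)>n^{1/4}$ the degree hypothesis forces $d(z)=d_H(z)$, whence $xz\notin E(G^\cD)$ and the statement is trivial; so I may assume $d(z)\leq n^{1/4}$, and likewise $d(x)\leq n^{1/4}$ since $x\in V\sm Z$.

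For each $G\in\cF^+$ I count $\{xz,yw\}$-switches with $y,w\in V\sm Z$ and $yw\in E(G)$; each of these yields a graph in $\cF^-$. Using that $F$ is bipartite between $Z$ and $V\sm Z$ and that $G[Z]=H$, one readily obtains $2|E(G[V\sm Z])|=M-\sum_{w\in Z}\hat d(w)$, so the number of admissible ordered pairs is exactly $M-\sum_{w\in Z}\hat d(w)$. Subtracting the $O(n^{1/2})$ invalid switches (loops, multi-edges, or $\{y,w\}$ meeting $\{x,z\}$) yields the uniform bound $s^+(G)=M(1\pm O(\nu))$. For each $G\in\cF^-$ I count the reverse switches $\{xy,zw\}$ with $y,w\in V\sm Z$ and $xy,zw\in E(G)$: the admissible $w$'s at $z$ number $d(z)-d_H(z)-d_F(z)=\hat d(z)$, whereas the admissible $y$'s at $x$ number $d_G(x,V\sm Z)=\hat d(x)-e_x(G)$, where $e_x(G):=d_G(x,Z)-d_F(x)$. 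Hence
$$
s^-(G) \;=\; (\hat d(x)-e_x(G))\,\hat d(z) \;-\; I(G),
$$
with $I(G)$ the number of invalid pairs (either $y=w$ or $yw$ already in $E(G)$).

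The main technical step is to show that, when averaged over $\cF^-$, both $e_x(G)\hat d(z)$ and $I(G)$ contribute only an $O(\sqrt\nu)$ relative error to the main term $\hat d(x)\hat d(z)$. For $e_x$, I would apply Lemma~\ref{lem: back edges} (with $Z'=Z$, $H'=H$, $F'=F$, $\cE=\{xz\notin E(G^\cD)\}$, whose hypotheses are directly inherited from those of the present lemma) to obtain
$$
\bE[e_x\mid\cF^-] \;\leq\; \sqrt\nu\,\hat d(x) \;+\; \sum_{i\geq 0}(22\sqrt\nu)^{i+1} \;=\; O(\sqrt\nu\,\hat d(x))+O(1).
$$
For $I$, I would invoke Lemma~\ref{lem: vertices adjacent} for each individual candidate edge $yw$ with $y,w\in V\sm Z$ to bound its conditional presence probability by $O(n^{-1/2})=O(\sqrt\nu)$ (using $1/n\ll\nu$), and then sum over the admissible pairs to get $\bE[I\mid\cF^-]=O(\sqrt\nu\,\hat d(x)\hat d(z))$.

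Combining the two counts via~\eqref{eq:fund} yields
$$
\frac{|\cF^+|}{|\cF^-|} \;=\; \frac{\sum_{G\in\cF^-}s^-(G)}{\sum_{G\in\cF^+}s^+(G)} \;=\; \frac{\hat d(x)\hat d(z)}{M}\bigl(1\pm O(\sqrt\nu)\bigr),
$$
and since $\hat d(x)\hat d(z)/M\leq n^{1/2}/M=O(n^{-1/2})\ll 1$, this estimate transfers unchanged from $|\cF^+|/|\cF^-|$ to the target conditional probability $|\cF^+|/(|\cF^+|+|\cF^-|)$; a careful tracking of absolute constants then produces the factor $1\pm 25\sqrt\nu$. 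The chief obstacle is the simultaneous quantitative control of $e_x$ (back-edges into $Z$) and of $I$ (pre-existing edges between the admissible sets), both of which are precisely the phenomena captured by the two preceding switching lemmas.
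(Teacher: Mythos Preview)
Your approach is essentially the same as the paper's: both use the switching method between $\cF^+$ and $\cF^-$, bound the $\cF^+\to\cF^-$ switches by $M(1\pm O(\nu))$, and control the reverse direction via Lemma~\ref{lem: back edges} (for the back-edges $e_x$) and Lemma~\ref{lem: vertices adjacent} (for pre-existing edges between the admissible neighbours). Two points, however, need correction.

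First, the displayed identity
\[
\frac{|\cF^+|}{|\cF^-|}=\frac{\sum_{G\in\cF^-}s^-(G)}{\sum_{G\in\cF^+}s^+(G)}
\]
is false as written: by double-counting the bipartite switch graph, the two sums are \emph{equal}. What you mean, and what~\eqref{eq:fund} actually says, is $|\cF^+|/|\cF^-|=\bE[s^-\mid\cF^-]/\bE[s^+\mid\cF^+]$. This is only a slip, but as stated the line is incorrect.

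Second, and more substantively, your bound $\bE[I\mid\cF^-]=O(\sqrt\nu\,\hat d(x)\hat d(z))$ is not justified by a direct appeal to Lemma~\ref{lem: vertices adjacent}. That lemma bounds $\Pro[yw\in E(G^\cD)]$ conditional on $G^\cD[Z']=H'$ and $F'\subseteq G^\cD$ only; here you need the bound under the \emph{further} conditioning that $xy,zw\in E(G)$ and $xz\notin E(G)$, and the edge $xy$ (both endpoints in $V\sm Z$) cannot be absorbed into either $H'$ or $F'$ if you keep $Z'=Z$. The paper handles exactly this by stratifying $\cF^-$ according to the full neighbourhoods $\bar y$ of $z$ in $V\sm Z$, $\bar z$ of $x$ in $Z$, and $\bar w$ of $x$ in $V\sm Z$; within each stratum one can take $Z'=Z\cup\{x\}$ and encode $xy,zw$ (and the non-edge $xz$) in $H',F'$, after which Lemma~\ref{lem: vertices adjacent} legitimately yields the $O(n^{-1/2})$ bound for each candidate $y_iw_j$. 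Your sketch tacitly assumes this stratification; once you make it explicit, the argument goes through and coincides with the paper's.
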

\begin{proof}
Let $\cF_{xz}^+$ be the set of graphs $G$ with degree sequence $\cD$ such that $G[Z]=H$, $F\subseteq G$ and $xz \in E(G)$ and $\cF_{xz}^-$ the set of graphs with degree sequence $\cD$ such that 
$G[Z]=H$, $F\subseteq G$ but $xz \not \in E(G)$. 
As before, we consider only switches using edges that are not contained in $E(H)\cup E(F)$.

First, note that if $\min\{\hat{d}(x),\hat{d}(z)\}=0$, then the statement holds trivially. 
Therefore, we may assume that $\hat{d}(x),\hat{d}(z)\geq 1$. 
Suppose $G\in \cF_{xz}^-$. 
Applying Lemma~\ref{lem: back edges} with $Z'=Z$, $H'=H$, $F'=F$ and $i=0$, we deduce that
\begin{align*}
	\Pro[d_{G}(x,Z)-d_F(x)\geq \sqrt{\nu}\hat{d}(x)\mid G[Z]=H,F\subseteq G,xz\notin E(G)] \leq 22\sqrt{\nu}\;.
\end{align*}
Let $\hat{\cF}_{xz}^-$ denote the subset of $\cF_{xz}^-$ where
$d_G(x,Z)< d_F(x) + \sqrt{\nu} \hat{d}(x)$ holds. 
Then the above 
implies that 
\begin{align}\label{eq:F-}
	|\hat{\cF}^-_{xz}| \geq (1 - 22 \sqrt{\nu}) |\cF_{xz}^-|. 
\end{align}
In other words, for at least $(1- 22\sqrt{\nu})|\cF_{xz}^-|$ of 
the graphs in $\cF_{xz}^-$,
the vertex $x$ has at most $\sqrt{\nu}\hat{d}(x)$ neighbours $z'\in Z\sm\{z\}$ with $xz'\notin E(F)$.

Since $\hat{d}(z)\geq 1$, 
the vertex $z$ has at least one neighbour $V\sm Z$ through an edge not in $E(F)$.
We now partition the set $\hat{\cF}_{xz}^-$ into sets according to the neighbours of 
$z$ in $V \sm Z$ and the neighbours of $x$ in $Z$ (through edges that do not belong to $E(F)$). 
We will use $\bar{y}$ to denote sets of vertices in
$\{y_1,\ldots, y_r\}\subseteq V\sm (Z\cup \{x\})$
and $\bar{z}$ to denote sets of vertices in $\{z_1,\ldots, z_m \} \subseteq Z\sm \{z\}$.  
We define $\hat{\cF}^-_{xz} (\bar{y}, \bar{z})$ to be the subset of graphs in 
$\hat{\cF}^-_{xz}$ such that
the vertices in $\bar{y}$ are the neighbours of $z$ in $V\sm Z$
and the vertices in $\bar{z}$ are the 
neighbours of $x$ in $Z$. In both cases, we only consider the neighbours that are connected to either $z$ or $x$ by an edge not in $E(F)$. 

Thus, $\hat{\cF}_{xz}^-$ is the disjoint union of all subsets 
$\hat{\cF}^-_{xz} (\bar{y}, \bar{z})$, ranging over 
all $\bar{y}$ and $\bar{z}$ as specified above; that is, in particular, $|\bar{y}| = \hat{d}(z)$ and $|\bar{z}|\leq \sqrt{\nu} \hat{d} (x)$. 
We will now use Lemma~\ref{lem: vertices adjacent} to show that for most members of 
$\hat{\cF}^-_{xz} (\bar{y}, \bar{z})$, the vertex $x$ is not adjacent to any vertex in $\bar{y}$.  

To apply Lemma~\ref{lem: vertices adjacent}, we set $Z' := Z \cup \{x\}$, 
$V(H')=Z'$, and 
$E(H')$ consists of $E(H)$, 
the edges that join $x$ and $\bar{z}$, and the edges in $F$ that are incident to $x$. 
The graph $F'$ is the bipartite graph with vertex set $(Z', V\sm Z')$ and edge set $E(F)\sm \{x\hat{z}: \hat{z}\in Z\}$. 
Also observe that \ref{item:E1}, \ref{item:E2}, and \ref{item:E3} are satisfied; 
in particular, $\sum_{w\in V\sm Z'} (d(w)-d_{F'}(w))\geq  M - d(x)\geq n/20$ holds.
Let $\hat{\cF}^{--}_{xz} (\bar{y}, \bar{z})$ be the subset of $\hat{\cF}^-_{xz} (\bar{y}, \bar{z})$ in which 
$x$ is not adjacent to a vertex in $\bar{y}$. 
Since $xy\notin E(F')$ for each $y\in \bar{y}$, Lemma~\ref{lem: vertices adjacent} implies that 
\begin{align}\label{eq:F--}
	|\hat{\cF}^{--}_{xz} (\bar{y},\bar{z}) | \geq 
	(1 - 30 n^{-1/2} n^{1/4}) |\hat{\cF}^{-}_{xz} (\bar{y}, \bar{z}) |
	= (1 - 30 n^{-1/4}) |\hat{\cF}^{-}_{xz} (\bar{y}, \bar{z}) |,
\end{align}
because $\bar{y}$ contains at most $\hat{d}(z)\leq n^{1/4}$ vertices.

Next, we partition the set $\hat{\cF}^{--}_{xz} (\bar{y}, \bar{z}) $ according to the neighbours of $x$ in $V\sm Z$. 
We will use $\bar{w}$ to denote the set of neighbours of $x$ in $V\sm Z$. 
Thus $\bar{w}$ does not contain any member of $\bar{y}\cup\{x\}$ and 
$(1- \sqrt{\nu}) \hat{d}(x)\leq |\bar{w}|\leq \hat{d}(x)$. 
For such a $\bar{w}$, we let 
$\hat{\cF}^{--}_{xz} (\bar{y}, \bar{z},\bar{w})$ be the subset of 
$\hat{\cF}^{--}_{xz} (\bar{y}, \bar{z})$ 
where $\bar{w}$ are the neighbours of $x$ in $V\sm Z$.

Assume now that $\bar{y} = \{y_1,\ldots, y_{r}\}$
and $\bar{w} = \{w_1,\ldots, w_\ell \}$, with $r=\hat{d}(z)$ and
$(1- \sqrt{\nu}) \hat{d} (x) \leq \ell \leq \hat{d}(x)$.
We fix some $i\in [r]$ and $j\in [\ell]$.
An straightforward switching argument as for example performed in Lemma~\ref{lem: vertices adjacent}
shows that for at least $(1-n^{-1/10})|\hat{\cF}^{--}_{xz} (\bar{y}, \bar{z},\bar{w})|$ graphs in $\hat{\cF}^{--}_{xz} (\bar{y}, \bar{z},\bar{w})$,
the edge $y_iw_j$ is not present.
In this case, we apply the switch $\{zy_i,xw_j\}$.
Thus, in total, the number of switches from graphs in $\hat{\cF}^{--}_{xz} (\bar{y}, \bar{z},\bar{w})$ to graphs in $\cF_{xz}^+$ is at least
\begin{align*}
	(1-n^{-1/10})\ell r |\hat{\cF}^{--}_{xz} (\bar{y}, \bar{z},\bar{w})|
	\geq (1-n^{-1/10})(1- \sqrt{\nu}) \hat{d}(x)\hat{d}(z)|\hat{\cF}^{--}_{xz} (\bar{y}, \bar{z},\bar{w})|.
\end{align*}
Hence the number of switches from graphs in $\hat{\cF}^{-}_{xz} (\bar{y}, \bar{z})$ to graphs in $\cF_{xz}^+$ is at least
\begin{align*}
	(1-n^{-1/10})(1- \sqrt{\nu}) \hat{d}(x)\hat{d}(z) |\hat{\cF}^{--}_{xz} (\bar{y}, \bar{z})|
	\stackrel{(\ref{eq:F--})}{\geq} (1- 2\sqrt{\nu}) \hat{d}(x)\hat{d}(z)|\hat{\cF}^{-}_{xz} (\bar{y}, \bar{z})|.
\end{align*}
This in turn implies that the number of switches from graphs in $\cF_{xz}^-$ to graphs in $\cF_{xz}^+$
is at least
\begin{align*}
	(1- 2\sqrt{\nu}) \hat{d}(x)\hat{d}(z)|\hat{\cF}^{-}_{xz}|
	\stackrel{(\ref{eq:F-})}{\geq} (1- 24\sqrt{\nu}) \hat{d}(x)\hat{d}(z)|\cF^-_{xz} |.
\end{align*}

Furthermore, since the edges of $F$ are not involved in such switches, 
there are at most $\hat{d}(x)\hat{d}(z)$ switches transforming a graph in $\cF_{xz}^-$ into a graph in $\cF_{xz}^+$.

Consider now a graph in $\cF_{xz}^+$. Any switch that transforms it into a graph in $\cF_{xz}^-$ must use the edge $xz$. It suffices to bound the number of choices for the other edge.
On the one hand, it is easy to see that there are at most $M$ switches leading to a graph in $\cF_{xz}^-$.
On the other hand, since $d(x),d(z)\leq n^{1/4}$, 
there are at least $M- \nu n - 2n^{1/2}$ edges in distance at least $2$ from $xz$ which belong to $G[V\sm Z]$.
Thus there are at least $M- 2\nu n$ switches leading to a graph in $\cF_{xz}^-$.
\smallskip

Combining all four bounds, leads to the desired statement.
\end{proof}

\subsection{The exploration process}

In order to bound the order of the largest component in $G^\cD_p$ we will perform an exploration process on $G^\cD$ that reveals 
the components of $G^\cD_p$.  
An \emph{input} is a pair $(G,\fS)$ with the following properties.
For a given degree sequence $\cD$ on the vertex set $V$,
we let $G$ be a graph on $V$ with degree sequence $\cD$
and for every vertex $v$, 
we arbitrarily assign the labels $1,\ldots,d(v)$ to its incident edges.
In this way, each edge obtains two labels. Since each label is associated with one of the endpoints of the corresponding edge, it is convenient to understand this labelling as a labelling of the \emph{semi-edges} of the graph in such a way that the semi-edges incident to $v$ are given the labels $1,\ldots,d(v)$. Thus, during the exploration process, $G$ is equipped with an arbitrary labelling of the semi-edges incident to each vertex. The semi-edge labelling fits well with the switching method: if $G'$ is obtained from $G$ by switching two edges, then the semi-edges of $G'$ naturally inherit the labelling on the semi-edges of $G$. 
The set $\fS=\{\sigma_v: v\in V\}$ is a collection of permutations, one for each vertex $v\in V$, where $\sigma_v$ is a permutation of length $d(v)$.
For technical reasons that will become apparent soon, we will need to consider the exploration process on an input. The labelling on the semi-edges together with $\fS$, will determine the order in which the vertices are explored during the process.


Given an input $(G,\fS)$ and a subset of vertices $S_0\subseteq V$, we proceed to describe the exploration of $G$ from $S_0$. 
First, for every vertex in $v\in V$, 
we permute the labels of its incident semi-edges according to $\sigma_v$.
Observe that a uniformly selected set of permutations $\fS$
leads to a uniformly selected labelling of the semi-edges incident to each vertex of $G$.
First, we expose the graph $G[S_0]$. 
For every $t\geq 0$, let $S_t$ be the set of vertices that have been explored up to time $t$, let $H_t:=G[S_t]$ and let $F_t$ be the bipartite subgraph with vertex partition $(S_t,V\sm S_t)$ that contains those edges of $E(G)$ that have been exposed but have not survived the random deletion -- we will be referring to these edges
as the edges that have \emph{failed to percolate}. 
For a vertex $u\in V$, we define its \emph{free degree at time $t$} as
$$
\hat{d}_t(u):= d(u) - d_{H_t}(u) - d_{F_t}(u)\;.
$$

We may assume that $V$ has some fixed ordering.
If at time $t$ there exists at least one vertex $v\in S_t$ with $\hat{d}_t(v)\geq 1$, 
we select\footnote{To be precise, the selection of a new vertex and the updates of the considered parameters happen between time $t$ and $t+1$.} 
the smallest vertex $v_{t+1}\in S_t$ such that $\hat{d}_t(v_{t+1})\geq 1$.
Let $w_{t+1}$ be the vertex $w\in V \sm {S_t}$ with $v_{t+1}w\in E(G)\sm E(F_t)$ that
minimizes $\sigma_{v_{t+1}}(\ell(w))$, where $\ell(w)$ is the label of the semi-edge incident to $v_{t+1}$ that 
corresponds to $v_{t+1}w$.
After that, with probability $p$, we retain the edge $v_{t+1}w_{t+1}$ in $G_p$. If the edge survives percolation, we proceed as follows: 
\begin{enumerate}[label=\arabic*.]
\item we set $S_{t+1}:=S_t\cup\{w_{t+1}\}$;
\item we expose all the edges (\emph{back edges}) from $w_{t+1}$ to $S_t\sm\{v_{t+1}\}$ that are not in $F_t$; we define the \emph{backward degree}\footnote{Note that the backward degree does not include the contribution of $v_{t+1}$.} of $w_{t+1}$ as 
$$
 d_t'(w_{t+1}) := d(w_{t+1}, S_t\sm\{v_{t+1}\} )-d_{F_t}(w_{t+1}) \;;
$$
\item we retain each of the back edges in $G_p$ independently with probability $p$; and 
\item we define $H_{t+1}:=G[S_{t+1}]$ and 
let $F_{t+1}$ be the bipartite subgraph with vertex partition $(S_{t+1}, V\sm S_{t+1})$ that contains all the edges between $S_{t+1}$ and $V\sm S_{t+1}$ that have failed to percolate so far.
\end{enumerate}
If $v_{t+1}w_{t+1}$ fails to percolate, we set $S_{t+1}:=S_t$, $H_{t+1}:=H_t$, $V(F_{t+1}):=V(F_{t})\cup \{w_{t+1}\}$, and
$E(F_{t+1}):=E(F_t)\cup \{v_{t+1}w_{t+1}\}$.

Finally, if there is no $v\in S_t$ with $\hat{d}_t(v)\geq 1$, we let $w_{t+1}=u$, where $u\in V\sm S_t$ is chosen with probability proportional to $\hat{d}_t(u)$, and 
we set $S_{t+1}:=S_t\cup \{w_{t+1}\}$, $H_{t+1}:=G[S_{t+1}]$ and $F_{t+1}:=F_t$. This marks the beginning of a new component. 
\medskip

Note that at time $t$ we have explored at most $t$ new vertices and that $G[S_{t}]$ is fully exposed (as well as $G_p[S_{t}]$). 
Moreover, there is a 
set of edges $E(F_{t})$ joining $S_{t}$ and $V\sm S_{t}$ that have also been exposed but failed to percolate. 


Let $\cH_t$ denote the history of the exploration process after $t$ rounds (at time $t$). 
More precisely, this is the random object composed of the collection of all the choices that have been made in the exploration process up to time $t$, and include the choice of $S_t$, $H_t=G[S_t]$ and $F_t$. 
Observe that for a fixed input $(G,\fS)$,
the only randomness in this exploration process stems from the percolation process and the random selection of a new vertex if $H_t$ is a union of components in the percolated graph.

The next two variables will be crucial to control our exploration process at time $t$:
\begin{itemize}
\item[-] $M_t:= \sum_{u\in V\sm S_t}\hat{d}_t(u)$, which equals the number of ordered edges $uv$ with $u\in V\sm S_t$ and $uv\notin E(F_t)$.

\item[-] $X_t:=\sum_{u\in S_t} \hat{d}_t(u)$, which equals the number of edges $uv$ with $u\in S_t$, $v\in V\sm S_t$ and $uv\notin E(F_t)$. 
\end{itemize}

The variable $X_t$ counts the number of edges that are suitable to be used in the step $t+1$ to continue the exploration process. If $X_t=0$, then we have completed the exploration of a component of $G_p$.

In order to deduce Proposition~\ref{prop:explo}, 
we will analyse the exploration procedure on the input $(G^\cD,\fS)$, where each permutation in $\fS$ is chosen uniformly at random among all permutations of the appropriate length. 
In order to show that the largest component in $G^\cD_p$ is large or small, 
we will consider the evolution of the random process $\{X_t\}_{t\geq 0}$ conditional on its history $\cH_t$, that is, 
the set of all decisions taken up to step $t$. More formally, $\cH_t$ is the $\sigma$-algebra generated by 
all random decision taken up to step $t$. 
Note that now $\cH_t$ does not only depend on the indicator random variables associated with whether the 
edges survive percolation, but also on the random graph $G^\cD$.
Using the method of the deferred decisions, we can generate each random permutation while we perform the exploration process. 
This ensures that, at step $t$, any choice of $w_{t+1}$ satisfying the desired properties is equally possible (see Section 2.2 in~\cite{joos2016how} for a more details).

\subsection{The expected increase of $X_t$}

For every $uv\in E(G)$, let $I(uv)$ be the indicator random variable that the edge $uv$ percolates. 
\medskip

\noindent 
If $X_t>0$,
then the increase of $X_t$ can be written as
\begin{align}\label{eq:change}
X_{t+1}-X_t&= -(1-I(v_{t+1}w_{t+1}))+I(v_{t+1}w_{t+1})((\hat{d}_t(w_{t+1})-2) - 2d_t'(w_{t+1})) \;,
\end{align}
and if $X_t=0$, as
\begin{align}\label{eq:change2}
X_{t+1}-X_t&= \hat{d}_t(w_{t+1})\;.
\end{align}

The next three lemmas use Lemmas~\ref{lem: back edges} and~\ref{lem: prob next}, \eqref{eq:change}, \eqref{eq:change2}, and $\Exp(I(v_{t+1}w_{t+1}))=p$ to provide bounds on $\Exp[X_{t+1}-X_t \mid \cH_t]$ assuming that $t$ is small, 
$M_t$ is large,
and 
$X_t$ is small and for the first lemma also positive.	
\begin{lemma}\label{lem: exp change upper}
Suppose $n\in \N$ and $1/n\ll \beta,\rho, \eta\ll \lambda \ll\mu, 1/\avd, p\leq 1$.
Let $S_0\subseteq V$ and let $\cD$ be a degree sequence on $V$ such that 
$\Td\leq\avd n$
and $\sum_{u\in V\setminus S_0}d (u)(p(d (u)-1)-1)\leq -\mu n.$

Consider the exploration process described above on $(G^\cD,\fS)$ with initial set $S_0$ and suppose $t\leq \rho n$. 
Conditional on $\cH_t$
satisfying $d_{H_t}(w)=d(w)$ for every $w\in V$ with $d(w)> n^{1/4}$, 
$M_t\geq (1- \eta) \Td$,
 and 
$0<X_t\leq \beta n$,
we have
\begin{align*}
	\Exp[X_{t+1}-X_t \mid \cH_t]
	\leq - \lambda .
\end{align*}
\end{lemma}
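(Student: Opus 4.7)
The plan is to combine equation~\eqref{eq:change} with Lemma~\ref{lem: prob next} and then to translate the resulting quadratic sum back to the original degree sequence using the standing assumption. Conditional on $\cH_t$ and on $w_{t+1}$, the indicator $I(v_{t+1}w_{t+1})$ is Bernoulli$(p)$, independent of $\hat{d}_t(w_{t+1})$ and $d'_t(w_{t+1})$; using $d'_t(w_{t+1})\geq 0$ and dropping the corresponding term yields
\[
\Exp[X_{t+1}-X_t\mid \cH_t] \leq -(1+p) + p\,\Exp[\hat{d}_t(w_{t+1})\mid \cH_t].
\]

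To evaluate $\Exp[\hat{d}_t(w_{t+1})\mid \cH_t]$, I would apply Lemma~\ref{lem: prob next} with $Z=S_t$, $H=H_t$, $F=F_t$, and $\nu=\beta$; its three hypotheses hold under our conditioning because high-degree vertices lie in $S_t$ with full degree inside $H_t$, $\sum_{w\in S_t}\hat{d}_t(w)=X_t\leq \beta n$, and $M_t\geq (1-\eta)\Td \geq n/10$. Combining the lemma with the symmetry of the uniform random permutation $\sigma_{v_{t+1}}$ (each of the $\hat{d}_t(v_{t+1})$ valid semi-edges at $v_{t+1}$ is equally likely to carry the smallest label) yields, for every $w\in V\sm S_t$ with $v_{t+1}w\notin E(F_t)$,
\[
\Pro[w_{t+1}=w\mid \cH_t] = \frac{\Pro[v_{t+1}w\in E(G^{\cD})\mid \cH_t]}{\hat{d}_t(v_{t+1})} = \frac{\hat{d}_t(w)}{M_t}\bigl(1\pm 25\sqrt{\beta}\bigr).
\]
Multiplying by $\hat{d}_t(w)$ and summing gives $\Exp[\hat{d}_t(w_{t+1})\mid \cH_t] \leq (1+25\sqrt{\beta})M_t^{-1}\sum_{w\in V\sm S_t}\hat{d}_t(w)^2$, which after substitution rearranges to
\[
\Exp[X_{t+1}-X_t\mid \cH_t] \leq \frac{1}{M_t}\sum_{w\in V\sm S_t} \hat{d}_t(w)\bigl(p(\hat{d}_t(w)-1)-1\bigr) + \frac{25p\sqrt{\beta}}{M_t}\sum_{w\in V\sm S_t}\hat{d}_t(w)^2.
\]

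For the main term I use $\hat{d}_t(w)\leq d(w)$ and $V\sm S_t\subseteq V\sm S_0$ together with the standing assumption (which rearranges to $p\sum_{u\in V\sm S_0}d(u)^2\leq (1+p)\sum_{u\in V\sm S_0}d(u)-\mu n$) to obtain
\[
\sum_{w\in V\sm S_t}\hat{d}_t(w)\bigl(p(\hat{d}_t(w)-1)-1\bigr) \leq (1+p)\Bigl(\sum_{u\in V\sm S_0}d(u)-M_t\Bigr) - \mu n.
\]
The bound $M_t\geq (1-\eta)\Td$ forces both $\sum_{u\in S_t}d(u)\leq \eta \Td$ and $|E(F_t)|\leq \eta \Td$, so $\sum_{u\in V\sm S_0}d(u)-M_t = \sum_{u\in S_t\sm S_0}d(u)+|E(F_t)| \leq 2\eta \avd n$, which together with $M_t\leq \avd n$ bounds the main term by $-\mu/(2\avd)$ once $\eta\ll \mu/\avd$. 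The same inequalities yield $p\sum_w \hat{d}_t(w)^2/M_t=O(1)$, so the error term is $O(\sqrt{\beta})$. Combining, $\Exp[X_{t+1}-X_t\mid \cH_t]\leq -\mu/(2\avd)+O(\sqrt{\beta})+O(\eta)\leq -\lambda$ by the hierarchy $\beta,\eta\ll \lambda\ll \mu,1/\avd$.

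The main subtlety is keeping the multiplicative $\sqrt{\beta}$ error from Lemma~\ref{lem: prob next} additive at the level of $\Exp[X_{t+1}-X_t\mid \cH_t]$; this is possible precisely because the assumption itself makes $p\sum_{u\in V\sm S_0} d(u)^2$ an $O(n)$ quantity, which then absorbs the multiplicative slack cleanly. The hypothesis $t\leq \rho n$ is not needed for the drift estimate itself; it only serves the companion arguments in which one controls the evolution of the conditioning events $M_t\geq (1-\eta)\Td$ and $X_t\leq \beta n$ over time.
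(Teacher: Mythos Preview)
Your argument is correct and in fact slightly cleaner than the paper's. Both proofs start the same way: drop the nonnegative back-edge term $2d'_t(w_{t+1})$ from \eqref{eq:change}, and use Lemma~\ref{lem: prob next} together with the symmetry of $\sigma_{v_{t+1}}$ to get $\Pro[w_{t+1}=u\mid\cH_t]=\hat d_t(u)(1\pm 25\sqrt\beta)/M_t$. The difference is in how the resulting sum is compared to the original degree sequence.

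The paper works with $\sum_{u\in V\sm S_t}\hat d_t(u)(\hat d_t(u)-2)$ and invokes the monotonicity of $x\mapsto x(x-2)$ on $x\geq 1$ to replace $\hat d_t(u)$ by $d(u)$; this produces an additive $+t$ (from vertices with $d(u)=1$, $\hat d_t(u)=0$), which is why the hypothesis $t\leq\rho n$ is actually used there. The paper must also split the sum according to the sign of $\hat d_t(u)-2$ when applying the $(1\pm25\sqrt\beta)$ factor, and treat separately the set $A_t=\{u:v_{t+1}u\in E(F_t)\}$.

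You instead isolate $p\sum_{w}\hat d_t(w)^2$, which is nonnegative, so the single upper bound $(1+25\sqrt\beta)$ applies uniformly and the $A_t$ vertices can be harmlessly included. Then $\hat d_t(w)^2\leq d(w)^2$ and $V\sm S_t\subseteq V\sm S_0$ give the comparison with no additive loss, and the key observation---that the standing assumption forces $p\sum_{u\in V\sm S_0}d(u)^2\leq(1+p)\avd n$, hence the multiplicative $\sqrt\beta$ error becomes an additive $O(\sqrt\beta)$---closes the argument. Your remark that $t\leq\rho n$ is not needed for this drift bound is correct for your route; in the paper's route it is.
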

\begin{proof}
At time $t$, 
there are at most $t$ vertices $u\in V\sm S_t$ such that $\hat{d}_t(u) = 0$. 
This is the case since $d(u)=\hat{d}_t(u)+d_{F_t}(u)$ for all $u\in V\sm S_t$
and at each
step $s\leq t$ there is at most one edge added to $F_s$.
Observe also that the function $h(x)= x(x-2)$ is monotone increasing for $x\geq 1$
and $h(0)=h(2)= 0$, $h(1)=-1$.
This implies that $\hat{d}_t(u)(\hat{d}_t(u)-2)> d (u)(d (u)-2)$ only if $d(u)=1$ and $\hat{d}_t(u)=0$.
It follows that
\begin{align} \label{eq:S_t-S_0}
	\sum_{u\in V\setminus S_t}\hat{d}_t(u)(\hat{d}_t(u)-2) \leq t+ \sum_{u\in V\sm S_0}d (u)(d (u)-2)\; .
\end{align}


The fact that $S_0$ contains all the neighbours in $G^\cD$ of vertices of degree larger than $n^{1/4}$ and that $S_0\subseteq S_t$, 
ensures that for every $v\in S_t$ such that $\hat{d}_t(v)\geq 1$,  we have $d(v)\leq n^{1/4}$. 
Choose $u\in V\sm S_t$.
Since $M_t \geq (1- \eta) \Td \geq n/10$ (recall that degrees are positive) and $X_t\leq \beta n$, and provided $v_{t+1}u\notin E(F_t)$, 
we can apply Lemma~\ref{lem: prob next} with $\nu=\beta$, $Z=S_t$, $H=H_t$, $F=F_t$, $z=v_{t+1}$, and $x=u$ to conclude that
\begin{align*}
	\Pro[v_{t+1}u\in E(G^\cD) \mid \cH_t]= \frac{\hat{d}_t(v_{t+1})\hat{d}_t(u)}{M_t}(1\pm 25\sqrt{\beta})\;.
\end{align*}
Observe that every edge incident to $v_{t+1}$ that is not contained in $E(F_t)\cup E(H_t)$ is chosen with the same probability to continue the exploration process. 
Thus the probability that $u$ is the vertex $w$ that minimizes $\sigma_{v_{t+1}}(\ell(w))$, 
where $\ell(w)$ is the label of the semi-edge incident to $v_{t+1}$ and corresponding to $v_{t+1}w$, 
among all $w\in V\sm S_t$ with $v_{t+1}w\in E(G^\cD)\sm E(F_t)$, is precisely $1/\hat{d}_t(v_{t+1})$.
Therefore,
\begin{align*}
	\Pro[u=w_{t+1}\mid \cH_t]= \frac{\hat{d}_t(u)}{M_t}(1\pm 25\sqrt{\beta})\;.
\end{align*}
Note that if $v_{t+1}u\in E(F_t)$, then $\Pro[u=w_{t+1}\mid \cH_t]=0$.

Let $n_1$ denote the number of vertices $v \in V \sm S_t$ with $\hat{d}_t(v) =1$ and let $A_t\subseteq V\sm S_t$ denote the set of vertices $u$ such that $v_{t+1}u\in E(F_t)$. Since $d(v_{t+1})\leq n^{1/4}$, we have $|A_t|\leq n^{1/4}$. 
Also $\hat{d}_t  (u) < d(u)\leq  n^{1/4}$ for all $u \in A_t$. 
Therefore,  $|\sum_{u\in A_t} \hat{d}_t(u)(\hat{d}_t(u) -2) |\leq n^{3/4}$.

Using~(\ref{eq:change}) and the fact that an edge percolates independently from the underlying graph, we conclude that
\begin{eqnarray*}
	&&\Exp[X_{t+1}-X_t \mid \cH_t] 
	\leq -(1-p)+ p \sum_{u\in V\sm S_t}\Pro[u=w_{t+1}](\hat{d}_t(u)-2)\\
	&\leq& -(1-p)+ \frac{p}{M_t}\left( (1+25\sqrt{\beta})\sum_{u\in V \sm S_t:\hat{d}_t(u)\geq 2}\hat{d}_t(u)(\hat{d}_t(u)-2)-n_1(1- 25 \sqrt{\beta}) + 2n^{3/4}\right)\\
	&\leq& -(1-p)+(1+25\sqrt{\beta})\frac{p}{M_t}\left(\sum_{u\in V \sm S_t}\hat{d}_t(u)(\hat{d}_t(u)-2)\right) + 100\sqrt{\beta}+\frac{2pn^{3/4}}{M_t}\\
	&\stackrel{\eqref{eq:S_t-S_0}, \beta, \eta \ll 1}{\leq}& -(1-p)+(1+25\sqrt{\beta})\frac{p}{M_t}\left(\sum_{u\in V \setminus S_0}d (u)(d (u)-2)\right)+ 2\rho+ 101\sqrt{\beta}.
\end{eqnarray*}
Now, we write	
\begin{align}	 \label{eq:rewrite}
& \frac{p}{M_t}\left(\sum_{u\in V \sm S_0}d(u)(d(u)-2)\right) =
\frac{p}{M_t}\left(\sum_{u\in V \sm S_0}d(u)(d(u)-1) - \sum_{u\in V \sm S_0}d(u)\right) \nonumber \\
&=\frac{p}{M_t}\left(\sum_{u\in V \sm S_0}d(u)(d(u)-1) - \sum_{u\in V \sm S_0}d(u)\right) 
- \frac{1}{M_t}\sum_{u\in V \sm S_0}d(u) +\frac{1}{M_t}\sum_{u\in V \sm S_0}d(u)
 \nonumber\\
	&= \frac{1-p}{M_t} \sum_{u\in V\setminus S_0}d (u) 
	+\frac{1}{M_t}\left(\sum_{u\in V\setminus S_0}d (u)(p(d (u)-1)-1)\right).
\end{align}
Hence, 
\begin{align*}	
\Exp[X_{t+1}-X_t \mid \cH_t] &\leq \frac{1-p}{M_t} \left( -M_t + (1+25\sqrt{\beta})\sum_{u\in V\setminus S_0}d (u)  \right) \\ 
    &\hspace{1cm}+\frac{1+25\sqrt{\beta}}{M_t}\left(\sum_{u\in V\setminus S_0}d (u)(p(d (u)-1)-1)\right)+ 2\rho+ 101\sqrt{\beta}. 
\end{align*}
But since $M_t \geq (1 - \eta)\Sigma^\cD\geq  (1 - \eta)\sum_{v\in V\sm S_0}d (v)$, we have 
\begin{align*}
\frac{1-p}{M_t} \left( -M_t +(1+25\sqrt{\beta})\sum_{u\in V \setminus S_0} d (u)  \right) 
& \leq \frac{1-p}{1-\eta}\left(-(1 - \eta) +1 + 25 \sqrt{\beta}\right) 
\leq \eta +\beta^{1/3},
\end{align*}
where the previous inequality follows as $\eta \leq p$ and $\beta \ll 1$.

Thereby, taking $\beta, \rho, \eta \ll \lambda\ll \mu, 1/\avd$,
\begin{align*}
\Exp[X_{t+1}-X_t \mid \cH_t] 	&\leq \frac{1+25\sqrt{\beta}}{M_t}\left(\sum_{u\in V \setminus S_0}d(u)(p(d (u)-1)-1)\right) +2\rho+ 101\sqrt{\beta}+\eta +\beta^{1/3} \\
&\leq -\frac{\mu}{\avd} + 2\rho+ 101\sqrt{\beta}+\eta +\beta^{1/3} \\
&\leq - \lambda \;,
\end{align*}
which completes the proof.
\end{proof}
%
%

For the following two lemmas we do not require the condition $X_t>0$.
\begin{lemma}\label{lem: exp change lower}
Suppose $n\in \N$ and $1/n\ll \beta,\rho, \eta\ll \lambda \ll\mu, 1/\avd, p\leq1$.
Let $S_0\subseteq V$ and let $\cD$ be a degree sequence on $V$ such that $\Td\leq\avd n$
and $R_p^{\cD} \geq \mu n$.

Consider the exploration process described above  on $(G^\cD,\fS)$ with initial set $S_0$ and suppose $t\leq \rho n$. 
Conditional on $\cH_t$ satisfying $d_{H_t}(w)=d(w)$ for every $w\in V$ with $d(w)> n^{1/4}$, $M_t\geq (1- \eta) \Sigma^\cD$ and $X_t\leq \beta n$,
we have
%
\begin{align*}
\Exp[\hat{d}_t(w_{t+1})-2\mid \cH_t]\geq \frac{2\lambda+1-p}{p}\;.
\end{align*}
\end{lemma}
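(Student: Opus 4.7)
The plan is to express $\Exp[\hat{d}_t(w_{t+1})\mid\cH_t]$ as a size-biased average of free degrees via Lemma~\ref{lem: prob next}, and then to bound this average from below by invoking Proposition~\ref{prop:imp} together with a tail/non-tail decomposition of $R_p^\cD$.

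First, arguing exactly as in the opening of the proof of Lemma~\ref{lem: exp change upper}, Lemma~\ref{lem: prob next} gives $\Pro[w_{t+1}=u\mid\cH_t]=(1\pm25\sqrt{\beta})\hat{d}_t(u)/M_t$ for each $u\in V\sm S_t$ with $v_{t+1}u\notin E(F_t)$ (the case $X_t>0$), while for $X_t=0$ the identity is exact by definition of the process. The at most $d(v_{t+1})\leq n^{1/4}$ excluded vertices contribute at most $n^{3/4}/M_t=O(n^{-1/4})$ to the expectation, so
\[
\Exp[\hat{d}_t(w_{t+1})\mid\cH_t]=(1\pm25\sqrt{\beta})\frac{\sum_{u\in V\sm S_t}\hat{d}_t(u)^2}{M_t}\pm O(n^{-1/4}).
\]
Using $\sum\hat{d}_t(u)^2=\sum\hat{d}_t(u)(\hat{d}_t(u)-1)+M_t$, the target inequality reduces (up to errors absorbed by $\beta,\eta\ll\lambda$) to showing $\sum_{u\in V\sm S_t}\hat{d}_t(u)(p(\hat{d}_t(u)-1)-1)\geq 2\lambda M_t$.

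To establish this I would apply Proposition~\ref{prop:imp} with $S=S_t$: the hypothesis $M_t\geq(1-\eta)\Td$ forces $\sum_{w\in S_t}d(w)\leq \eta\avd n$, and the hierarchy $\eta\ll\lambda\ll\mu,1/\avd$ guarantees $400\eta\avd\leq \mu$. This yields $R_p^{\cD_*}\geq \mu n/50$, where $\cD_*$ is the degree sequence of $G^\cD-S_t$. Writing $f(d):=d(p(d-1)-1)$ and letting $d^*$ denote the entry of $\cD_*$ at position $j_p^{\cD_*}$, we have $d^*>1+1/p$ (the defining property of $j_p^{\cD_*}$ forces $p(d^*-1)>1$) and $d^*\leq n^{1/4}$ since $V\sm S_t\subseteq V\sm S_0$. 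Splitting the sorted $\cD_*$ at $j_p^{\cD_*}$: the tail contributes at least $d^*R_p^{\cD_*}\geq (1+p)\mu n/(50p)$, and by minimality of $j_p^{\cD_*}$ the non-tail contribution lies in $(-f(d^*),0]$ with $f(d^*)\leq p(d^*)^2\leq pn^{1/2}=o(n)$. Hence $\sum_u f(d_{\cD_*}(u))\geq \mu n/(100p)$ for $n$ large. Finally, since $\hat{d}_t(u)\geq d_{\cD_*}(u)$ (because $d_{F_t}(u)\leq d_{G^\cD}(u,S_t)$) and $f(k)-f(k-1)=2pk-2p-1\geq -1$ for each $k\geq 1$, the telescoping estimate gives $\sum_u(f(\hat{d}_t(u))-f(d_{\cD_*}(u)))\geq -X_t\geq -\beta n$; combined with $\beta\ll \mu,p$ and $400\lambda p\avd\leq \mu$ from the hierarchy, we conclude
\[
\sum_{u\in V\sm S_t} f(\hat{d}_t(u))\geq \frac{\mu n}{100p}-\beta n\geq \frac{\mu n}{200p}\geq 2\lambda\avd n\geq 2\lambda M_t,
\]
completing the proof.

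The principal obstacle is the non-tail estimate: a crude bound of $-M_t$ on the non-tail sum is hopelessly weak, but the critical observation is that by minimality of $j_p^{\cD_*}$ the cumulative non-tail sum is bounded in magnitude by the single term $f(d^*)=O(pn^{1/2})$, which is $o(n)$. The subsequent transfer from $\cD_*$ to the free-degree sequence is comparatively routine, relying on the pointwise lower bound $f(k)-f(k-1)\geq -1$ and the uniform bound $X_t\leq \beta n$.
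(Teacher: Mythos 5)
Your proof is correct and follows essentially the same route as the paper's: apply Lemma~\ref{lem: prob next} to write the conditional expectation as a size-biased average of free degrees, invoke Proposition~\ref{prop:imp} on the degree sequence $\cD_*$ of $G^{\cD}[V\sm S_t]$ to get $R_p^{\cD_*}\geq \mu n/50$, split the sum of $f(d)=d(p(d-1)-1)$ at $j_p^{\cD_*}$, and transfer to the free degrees $\hat d_t$ via the slope bound $f(k)-f(k-1)\geq -1$ together with $X_t\leq\beta n$. The paper compresses the tail/non-tail decomposition and the transfer step into a single chain of inequalities (the derivation yielding $\mu n/60$), but the underlying argument is the same as yours.
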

\begin{proof}
We will first provide a lower bound on $\sum_{u\in V\sm S_{t}}\hat{d}_t(u)(p(\hat{d}_t(u)-1)-1)$. 
Consider a realisation of the degree sequence of $G^\cD[V\sm S_t]$ which satisfies the conditions on $\cH_t$, which we denote by $\cD_t'=(d_1',\dots, d'_{n'})$ with $d_1'\leq \dots\leq d_{n'}$ and $n'=|\cD'_t|$. 
Since $M_t \geq (1- \eta) \Sigma^\cD$, we have that $\sum_{v\in S_t} d(v) \leq \eta \avd n$.
By Proposition~\ref{prop:imp}, with $S=S_t$ and $\nu=\eta \avd$, and since  
$R^\cD_p \geq \mu n$ (observe that we choose $\nu\ll \mu$), 
we have  $ R_p^{\cD_t '}\geq \frac{\mu n}{50}$. 
(At this point we want to stress that the previous bound is not a with-high-probability statement; it holds for every possible realisation of $\cD'_t$.)
Recall that for $j\geq j_{\cD_t'}^p$, we have $p(d'_j-1)-1 > 0$.
It follows that
\begin{eqnarray}\label{eq:RDp}
&&\sum_{u\in V \sm S_{t}} \hat{d}_t(u)(p(\hat{d}_t(u)-1)-1)\notag \\
&\geq& \sum_{u \in V\sm S_t}d_{G^\cD[V\sm S_t]}(u)(p(d_{G^\cD[V\sm S_t]}(u)-1)-1) -X_t\nonumber \\
&\geq & 
\sum_{j=1}^{ j_{\cD_t'}^p}d_j'(p(d'_j-1)-1) +\sum_{j = j_{\cD_t'}^p}^{n'} d_j'(p(d'_j-1)-1) 
- d_{j_{\cD_t'}^p}(p(d_{j_{\cD_t'}^p} -1)-1) - \beta n \nonumber \\
&\geq& \sum_{j = j_{\cD_t'}^p}^{n'} d_j'(p(d'_j-1)-1)- 2\beta n\nonumber\\
&\geq & \sum_{j = j_{\cD_t'}^p}^n(p(d'_j-1)-1) - 2\beta n\nonumber\\
&=& R_p^{\cD_t '} -2\beta n \stackrel{\beta \ll \mu}{\geq} \frac{\mu n}{60}.
\end{eqnarray}

Let $n_1$ denote the number of vertices $v \in V \sm S_t$ with $\hat{d}_t(v) =1$ and let $A_t\subseteq V\sm S_t$ denote the set of vertices $u$ such that $v_{t+1}u\in E(F_t)$. As in Lemma~\ref{lem: exp change upper}, we have $|\sum_{u\in A_t} \hat{d}_t(u)(\hat{d}_t(u) -2) |\leq n^{3/4}$.  

If $X_t>0$ holds\footnote{Observe that this calculation is also correct if $X_t=0$.},
we can use Lemma~\ref{lem: prob next} to show that
\begin{align*} 
	p\Exp[\hat{d}_t(w_{t+1})&-2\mid \cH_t]
	= p \sum_{u\in V\sm S_{t}}\Pro[w_{t+1}=u\mid \cH_t](\hat{d}_t(u)-2)\\
	&\geq \frac{p}{M_t}\left( (1-25\sqrt{\beta})\sum_{u\in V \sm S_t:\hat{d}(u)\geq 2}\hat{d}_t(u)(\hat{d}_t(u)-2)-n_1(1+ 25 \sqrt{\beta}) - n^{3/4}\right)\\
	&\geq (1-25\sqrt{\beta})\frac{p}{M_t}\left(\sum_{u\in V \sm S_t}\hat{d}_t(u)(\hat{d}_t(u)-2)\right) - 101\sqrt{\beta}\;.
\end{align*}
Therefore, using a similar calculation as in (\ref{eq:rewrite}), we conclude
\begin{align}
	-(1-p)+p\Exp[\hat{d}_t(w_{t+1})-2\mid \cH_t]
	&\geq \notag
	\frac{1-p}{M_t} \left( -M_t + (1-25\sqrt{\beta})\sum_{u\in V\setminus S_t}\hat{d}_t (u)  \right) \\
    &\hspace{-2cm}+\frac{1-25\sqrt{\beta}}{M_t}\left(\sum_{u\in V\setminus S_t}\hat{d}_t (u)(p(\hat{d}_t (u)-1)-1)\right)- 101\sqrt{\beta}. \label{eq:part1}
\end{align}
Note that $\sum_{u\in V \setminus S_t} \hat{d}_t (u) \geq M_t - t \geq (1-\rho)M_t$. Similarly as before, 
\begin{align}\notag
\frac{1-p}{M_t} \left( -M_t +(1-25\sqrt{\beta})\sum_{u\in V \setminus S_t} \hat{d}_t (u)  \right) 
& \geq (1-p)\left(-1 +(1 - 25 \sqrt{\beta})(1-\rho)\right) \\
&\geq -(\rho +25\sqrt{\beta}). \label{eq:part2}
\end{align}
Using~\eqref{eq:RDp}, \eqref{eq:part1}, \eqref{eq:part2}, and taking $\beta,\rho\ll \lambda\ll\mu, 1/\avd$, we conclude the proof of the lemma as
\begin{align*}
-(1-p)+&p\Exp[\hat{d}_t(w_{t+1})-2\mid \cH_t]\geq \frac{(1-25\sqrt{\beta})\mu}{60 \avd} -\rho -126\sqrt{\beta} \geq 2\lambda\;.
\end{align*}
\end{proof}

\begin{lemma}\label{lem: back edges2}
Suppose $n\in \N$ and $1/n\ll \beta,\rho, \eta\ll\lambda \ll\mu, 1/\avd, p\leq 1$. 
Suppose $S_0\subseteq V$ and let $\cD$ be a degree sequence on $V$ such that $\Td\leq\avd n$
and $R_p^{\cD} \geq \mu n$.

Consider the exploration process described above  on $(G^\cD,\fS)$ with initial set $S_0$ and suppose $t\leq \rho n$. 
Conditional on $\cH_t$ satisfying $d_{H_t}(w)=d(w)$ for every $w\in V$ with $d(w)> n^{1/4}$, $M_t\geq (1- \eta) \Td$ and $X_t\leq \beta n$,
we have
\begin{align*}
	\Exp[d'_t(w_{t+1})\mid \cH_t]
	\leq \frac{1}{10}\Exp[\hat{d}_t(w_{t+1})-2\mid \cH_t],
\end{align*}
and
\begin{align*}
	\Exp[X_{t+1}-X_t\mid \cH_t]\geq  \lambda\;. 
\end{align*}
\end{lemma}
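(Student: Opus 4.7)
The plan is to use Lemma~\ref{lem: back edges} to bound $\Exp[d'_t(w_{t+1}) \mid \cH_t]$ in terms of $\Exp[\hat{d}_t(w_{t+1}) \mid \cH_t]$ and then feed this into the drift formulas~\eqref{eq:change} and~\eqref{eq:change2}, combining with Lemma~\ref{lem: exp change lower}.

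Fix $x \in V \sm S_t$ with $v_{t+1}x \in E(G^\cD) \sm E(F_t)$ and apply Lemma~\ref{lem: back edges} with $Z' = S_t$, $H' = H_t$, $F' = F_t$, $z = v_{t+1}$, $\nu = \beta$, and conditioning event $\cE = \{v_{t+1}x \in E(G^\cD)\}$. The hypotheses hold given $\cH_t$: all vertices of degree exceeding $n^{1/4}$ lie in $S_t$ with $d_{H_t}(w)=d(w)$; $\sum_{w \in S_t}\hat{d}_t(w) = X_t \leq \beta n$; and $M_t \geq (1-\eta)\Td \geq n/2$. Since the semi-edge labelling is independent of $G^\cD$, conditioning on $\{w_{t+1} = x\}$ yields the same conditional law on $G^\cD$ as conditioning on $\cE$. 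Hence Lemma~\ref{lem: back edges} gives, for every $i \geq 0$,
\begin{equation*}
\Pro[d'_t(x) > \sqrt{\beta}\hat{d}_t(x) + i \mid \cH_t, w_{t+1} = x] \leq (22\sqrt{\beta})^{i+1},
\end{equation*}
which integrates to $\Exp[d'_t(x) \mid \cH_t, w_{t+1} = x] \leq \sqrt{\beta}\hat{d}_t(x) + C\sqrt{\beta}$ for an absolute constant $C$. Averaging using $\Pro[w_{t+1} = x \mid \cH_t] \leq (1+25\sqrt{\beta})\hat{d}_t(x)/M_t$ (from Lemma~\ref{lem: prob next}, as in the proof of Lemma~\ref{lem: exp change lower}) together with $\sum_x \hat{d}_t(x)^2/M_t \leq \Exp[\hat{d}_t(w_{t+1}) \mid \cH_t]/(1 - 25\sqrt{\beta})$ produces the key estimate
\begin{equation*}
\Exp[d'_t(w_{t+1}) \mid \cH_t] \leq 2\sqrt{\beta}\,\Exp[\hat{d}_t(w_{t+1}) \mid \cH_t] + C'\sqrt{\beta}
\end{equation*}
for an absolute constant $C'$.

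For the first conclusion, write $Y := \Exp[\hat{d}_t(w_{t+1}) - 2 \mid \cH_t]$; Lemma~\ref{lem: exp change lower} yields $Y \geq (2\lambda + 1-p)/p \geq 2\lambda$, hence $\Exp[\hat{d}_t(w_{t+1}) \mid \cH_t] = Y + 2 \geq 2 + 2\lambda$, and $2\sqrt{\beta}(Y + 2) + C'\sqrt{\beta} \leq Y/10$ is immediate by $\sqrt{\beta} \ll \lambda$. For the second conclusion, the case $X_t = 0$ is trivial from~\eqref{eq:change2}: $\Exp[X_{t+1} - X_t \mid \cH_t] = \Exp[\hat{d}_t(w_{t+1}) \mid \cH_t] \geq 2 \geq \lambda$. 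For $X_t > 0$, independence of $I(v_{t+1}w_{t+1})$ from the graph combined with~\eqref{eq:change} gives
\begin{equation*}
\Exp[X_{t+1} - X_t \mid \cH_t] = -(1-p) + pY - 2p\Exp[d'_t(w_{t+1}) \mid \cH_t].
\end{equation*}
Substituting the key estimate and using $pY \geq 2\lambda + 1-p$ and $p \leq 1$ produces
\begin{equation*}
\Exp[X_{t+1} - X_t \mid \cH_t] \geq 2\lambda - 4\sqrt{\beta}(2\lambda + 1-p) - O(\sqrt{\beta}) \geq \lambda,
\end{equation*}
where the final inequality follows from $\sqrt{\beta} \ll \lambda$.

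I expect the main obstacle to be the careful conditioning required to invoke Lemma~\ref{lem: back edges}: matching the conditional law of $G^\cD$ given $\{w_{t+1} = x\}$ with that prescribed by the lemma (through independence of the semi-edge labels from $G^\cD$), and propagating the $(1 \pm 25\sqrt{\beta})$ multiplicative errors from Lemma~\ref{lem: prob next} through the averaging step. Once the bound on $\Exp[d'_t(w_{t+1}) \mid \cH_t]$ is in place, the rest is routine algebra driven by the hierarchy $\beta \ll \lambda$.
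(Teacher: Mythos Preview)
Your proposal is correct and follows essentially the same route as the paper: apply Lemma~\ref{lem: back edges} (with $\nu=\beta$, $Z'=S_t$, $z=v_{t+1}$, $\cE=\{v_{t+1}x\in E(G^\cD)\}$) to bound $\Exp[d'_t(w_{t+1})\mid\cH_t]$ by $2\sqrt{\beta}\,\Exp[\hat d_t(w_{t+1})\mid\cH_t]+O(\sqrt\beta)$, invoke Lemma~\ref{lem: exp change lower} for $Y\ge 2\lambda$, and substitute into~\eqref{eq:change}. One simplification: your averaging step need not pass through Lemma~\ref{lem: prob next} at all, since $\hat d_t(x)$ is $\cH_t$-measurable and hence $\sum_x \Pro[w_{t+1}=x\mid\cH_t]\,\hat d_t(x)=\Exp[\hat d_t(w_{t+1})\mid\cH_t]$ directly; the paper uses exactly this shortcut.
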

\begin{proof}
Suppose first that $X_t=0$. Recall that in this case, we start the exploration of a new component and 
we select a vertex in $V\sm S_t$ with probability proportional to its free degree. As this is at least 1, 
we deduce $\Exp[X_{t+1}-X_t\mid \cH_t]\geq p\geq  \lambda$.
By Lemma~\ref{lem: exp change lower}, we have $\Exp[\hat{d}_t(w_{t+1})-2\mid \cH_t]>0$.
As $d'_t(w_{t+1})=0$, the first bound also follows.

Suppose now that $X_t>0$.
In order to bound the expectation of $d'(w_{t+1})$ it is clear from Lemma~\ref{lem: back edges} with $\nu=\beta$, $Z=S_t$, $H=H_t$, $F'=F_t$, $x=w_{t+1}$, $z=v_{t+1}$ and $\cE=\{xz\in E(G^\cD)\}$ that
\begin{align}\label{eq:back_edges}
	\Exp[d_t'(w_{t+1})\mid \cH_t]&
	\leq 2\sqrt{\beta}\Exp[\hat{d}_t(w_{t+1})\mid \cH_t]\nonumber\\
	&=2\sqrt{\beta}\Exp[\hat{d}_t(w_{t+1})-2\mid \cH_t]+4\sqrt{\beta}\\
	&\leq \frac{1}{10}\Exp[\hat{d}_t(w_{t+1})-2\mid \cH_t]\nonumber\;,
\end{align}
where the previous inequality follows from the fact that $\Exp[\hat{d}(w_{t+1})-2\mid \cH_t]\geq 2\lambda$ by Lemma~\ref{lem: exp change lower}
and $\beta\ll \lambda$.

Using~\eqref{eq:change},~\eqref{eq:back_edges}, Lemma~\ref{lem: exp change lower} and taking 
$\beta\ll\lambda$, we obtain
\begin{align*}
	\Exp[X_{t+1}-X_t\mid \cH_t]
	&= -(1-p)+p(\Exp[\hat{d}_t(w_{t+1})-2\mid \cH_t] - 2\Exp[d_t'(w_{t+1})\mid \cH_t])\\
	&\geq -(1-p)+p\cdot (1-4\sqrt{\beta})\Exp[\hat{d}_t(w_{t+1})-2\mid \cH_t] - 8p\sqrt{\beta}\\
	&\geq 2(1-4\sqrt{\beta})\lambda -(1-p)4\sqrt{\beta}- 8p\sqrt{\beta}
	\geq \lambda \;,
\end{align*}
which completes the proof.
\end{proof}

\subsection{Another concentration inequality}

The following lemma will be used to show that several parameters of our process 
do not deviate much from their expected value.

\begin{lemma}\label{lem:our_concentration} 
Suppose $a<0,b>0$, $m\in \{0,1\}$, $t\in \N$, and $y\in [a,0)$.
Suppose $\tau$ is a stopping time with respect to a filtration $(\cF_s)_{s=0}^t$. 
Suppose $Y_0,Y_1,\dots, Y_t$ are random variables such that $Y_s$ is measurable at time $s$ and $Y_s-Y_{s-1}\in [a,b]$.
Suppose that for any $s\in [t]$, we have 
$$ \Exp [\ind{s\leq \tau} (-1)^m (Y_s - Y_{s-1}) \mid \cF_{s-1}] \leq y \ind{s\leq \tau}.  $$ 
Then
\begin{equation*} 
\Pro \left[ (-1)^m (Y_{\tau \wedge t} - Y_0) + \ind{t>\tau} (t-\tau) y > \frac{y t}{2} \right] < e^{-\frac{y^2}{12(b-a)^2}\cdot t}.
\end{equation*}
\end{lemma}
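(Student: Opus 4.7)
The plan is to reduce this to the standard Azuma--Hoeffding inequality for supermartingales with bounded increments by packaging the stopping and the drift into a single sequence of random variables. Concretely, for each $s\in [t]$ I would set
$$W_s := \ind{s\leq\tau}\bigl((-1)^m(Y_s - Y_{s-1}) - y\bigr),$$
so that the partial sum $M_s := \sum_{k=1}^s W_k$ telescopes to
$$\sum_{s=1}^t W_s = (-1)^m(Y_{\tau\wedge t} - Y_0) - (\tau\wedge t)\,y.$$
A short case check on $\tau \leq t$ versus $\tau>t$ shows that $(\tau\wedge t) + \ind{t>\tau}(t-\tau) = t$ in either case, so the event appearing in the conclusion is exactly $\bigl\{\sum_{s=1}^t W_s > -yt/2\bigr\}$. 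Since $y<0$, this threshold is strictly positive, which is what we need to apply an upper-tail concentration bound.

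The next step is to check that $(M_s)$ is a supermartingale with conditionally bounded increments. Because $\tau$ is a stopping time, $\ind{s\leq\tau}$ is $\cF_{s-1}$-measurable, so pulling it out of the conditional expectation together with the hypothesis gives
$$\Exp[W_s\mid\cF_{s-1}] = \ind{s\leq\tau}\Bigl(\Exp[(-1)^m(Y_s-Y_{s-1})\mid\cF_{s-1}] - y\Bigr) \leq 0.$$
For the conditional range, again condition on $\cF_{s-1}$: if $s>\tau$ then $W_s=0$ deterministically; if $s\leq\tau$ then $(-1)^m(Y_s-Y_{s-1})$ takes values in an interval of length $b-a$ (namely $[a,b]$ when $m=0$ and $[-b,-a]$ when $m=1$), so $W_s$ lies in an interval of length $b-a$. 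In either case the conditional range is at most $b-a$, independent of $m$.

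Given these two properties, the proof concludes by invoking the standard Azuma--Hoeffding inequality for supermartingales: applying Hoeffding's lemma to the centred conditional increment $W_s - \Exp[W_s\mid\cF_{s-1}]$ (whose conditional range is still bounded by $b-a$) and then using $\Exp[W_s\mid\cF_{s-1}]\leq 0$ to discard the positive-$\lambda$ exponential correction, one obtains, for every $x>0$,
$$\Pro\!\left[\sum_{s=1}^t W_s \geq x\right] \leq \exp\!\left(-\frac{2x^2}{t(b-a)^2}\right).$$
Setting $x := -yt/2$ yields the bound $\exp(-y^2 t/(2(b-a)^2))$, which is strictly stronger than the target $\exp(-y^2 t/(12(b-a)^2))$ (a factor $6$ of slack in the exponent), and in particular gives the strict inequality demanded by the statement.

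I do not anticipate a real obstacle here; the lemma is essentially a repackaged Azuma--Hoeffding bound. The only slightly delicate ingredient is the range computation for $m=1$ combined with the handling of the indicator, but once one notes that $\ind{s\leq\tau}$ is $\cF_{s-1}$-measurable, the conditional range of $W_s$ reduces cleanly to the two cases above, and everything else is bookkeeping.
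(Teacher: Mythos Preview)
Your proof is correct, and it takes a genuinely different route from the paper. The paper does not invoke Azuma--Hoeffding directly; instead it defines $Z_s := \ind{s\leq\tau}(-1)^m(Y_s-Y_{s-1}) + y\,\ind{s>\tau}$ (continuing the process with deterministic drift $y$ after $\tau$, rather than freezing it as you do), rescales linearly to $W_s := (Z_s - a)/(b-a) \in [0,1]$, and then applies a multiplicative Chernoff-type bound for $[0,1]$-valued variables with bounded conditional means (a result of McDiarmid, as stated in Sudakov--Vondr\'ak). The choice $\delta = -y/(2(y-a))$ in that bound is what produces the constant $12$ in the exponent.

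Your approach is more elementary and more efficient: by centring out the drift $y$ inside $W_s$ and observing that the conditional range is at most $b-a$ regardless of $m$, you land squarely in the setting of the supermartingale Azuma--Hoeffding inequality and obtain $\exp(-y^2 t/(2(b-a)^2))$, which is a factor~$6$ better in the exponent than what the lemma claims. The paper's detour through the $[0,1]$ rescaling buys nothing here; it is presumably an artefact of the authors having the Sudakov--Vondr\'ak lemma at hand. One small point worth being explicit about in your write-up: the measurability of $\ind{s\leq\tau}$ with respect to $\cF_{s-1}$ follows from $\{s\leq\tau\} = \{\tau\leq s-1\}^c$, which you use implicitly when pulling the indicator out of the conditional expectation.
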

To this end, we shall use the following lemma which was proved in~\cite{ar:SudVond} and is a corollary of a martingale concentration theorem (Theorem~3.12) from~\cite{McD}. 
\begin{proposition}\label{lem:concen}
Let $W_1, \ldots, W_t$ be random variables taking values in $[0,1]$ such that 
$$\Exp[W_s \mid W_1,\ldots,W_{s-1}]\leq w_s$$
for each $s\in [t]$.
Let $\lambda_t :=\sum_{s=1}^t w_s$.
Then for any $0<\delta\leq 1$, we have
$$\Pro\left[\sum_{s=1}^t W_s \geq (1+\delta)\lambda_t \right]\leq e^{- \frac{\delta^2\lambda_t}{3}}.$$
\end{proposition}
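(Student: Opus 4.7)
The plan is to reduce Lemma~\ref{lem:our_concentration} to the Chernoff bound in Proposition~\ref{lem:concen} by absorbing the stopping time into the definition of the summands and rescaling to $[0,1]$. For each $s\in[t]$, I would set
\[
\tilde Z_s := \ind{s\leq\tau}\cdot(-1)^m(Y_s-Y_{s-1}) + \ind{s>\tau}\cdot y,
\]
so that $\sum_{s=1}^t\tilde Z_s$ equals precisely $(-1)^m(Y_{\tau\wedge t}-Y_0) + \ind{t>\tau}(t-\tau)y$, the quantity whose upper tail we want to bound. Because $\tau$ is a stopping time, $\ind{s\leq\tau}$ is $\cF_{s-1}$-measurable, and the hypothesis of the lemma immediately yields $\Exp[\tilde Z_s\mid\cF_{s-1}]\leq y$.

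Next I would verify that $\tilde Z_s$ lies in an interval of length $b-a$, so that the Chernoff normalization is available. For $m=0$, both $Y_s-Y_{s-1}\in[a,b]$ and $y\in[a,0)\subseteq[a,b]$, so $\tilde Z_s\in[a,b]$. For $m=1$, the hypothesis applied on any event $\{s\leq\tau\}$ of positive probability gives $\Exp[-(Y_s-Y_{s-1})\mid\cF_{s-1}]\leq y$; combined with $-(Y_s-Y_{s-1})\geq -b$ this forces $y\geq -b$, so $\tilde Z_s\in[-b,-a]$. (If $\Pro[1\leq\tau]=0$ the conclusion reduces to the vacuous event $ty>yt/2$.) Writing $A$ for the left endpoint of this enclosing interval (so $A\in\{a,-b\}$), set $W_s := (\tilde Z_s-A)/(b-a)\in[0,1]$; by the tower property applied to $\sigma(W_1,\ldots,W_{s-1})\subseteq\cF_{s-1}$ one obtains $\Exp[W_s\mid W_1,\ldots,W_{s-1}]\leq w := (y-A)/(b-a)$, which is the input required by Proposition~\ref{lem:concen}.

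I would then apply Proposition~\ref{lem:concen} with $w_s\equiv w$ and $\lambda_t = wt$. The target event $\sum_s\tilde Z_s > yt/2$ rewrites as $\sum_s W_s > (1+\delta)\lambda_t$ with
\[
1+\delta \;=\; \frac{y/2-A}{y-A},\qquad \delta \;=\; \frac{-y}{2(y-A)}.
\]
Plugging in gives $\delta^2\lambda_t/3 = y^2t/\bigl(12(y-A)(b-a)\bigr)$, and $y-A\leq b-a$ in both cases (check: for $A=a$, $y-a\leq b-a \iff y\leq b$; for $A=-b$, $y+b\leq b-a\iff y\leq -a$; both hold as $y<0<b$ and $0<-a$), so the exponent is at least $y^2t/(12(b-a)^2)$, which is exactly the claimed bound.

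The main subtlety in my view is the derivation of $y\geq -b$ in the $m=1$ case, which is what guarantees that the "padding" value $y$ lies in the natural range $[-b,-a]$ of the increments $-(Y_s-Y_{s-1})$; without it the rescaling $W_s\in[0,1]$ would fail. Everything else is routine: a one-line rescaling plus the Chernoff bound of Proposition~\ref{lem:concen}, with a slightly loose constant $12$ emerging from $3\cdot(2(y-A)/(-y))^2$ combined with the monotone bound $y-A\leq b-a$.
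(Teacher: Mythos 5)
You have proved the wrong statement. The statement you were asked to establish is Proposition~\ref{lem:concen} itself, the Chernoff-type upper-tail bound for $\sum_s W_s$ when the $W_s$ take values in $[0,1]$ and satisfy $\Exp[W_s\mid W_1,\dots,W_{s-1}]\leq w_s$. Your argument opens with ``the plan is to reduce Lemma~\ref{lem:our_concentration} to the Chernoff bound in Proposition~\ref{lem:concen}'' and then uses Proposition~\ref{lem:concen} as an assumed tool throughout; as a proof of Proposition~\ref{lem:concen} this is circular, and as a proof of anything else it is off-target. What is needed instead is a direct derivation of the $e^{-\delta^2\lambda_t/3}$ bound. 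The standard route is the exponential-moment (Cram\'er--Chernoff) argument adapted to the conditional hypothesis: bound $\Exp[e^{\theta W_s}\mid W_1,\dots,W_{s-1}]\leq 1+w_s(e^{\theta}-1)\leq \exp\bigl(w_s(e^{\theta}-1)\bigr)$ using convexity of $x\mapsto e^{\theta x}$ on $[0,1]$, iterate by the tower property to get $\Exp\bigl[e^{\theta\sum_s W_s}\bigr]\leq e^{\lambda_t(e^{\theta}-1)}$, apply Markov, and optimise over $\theta>0$; the restriction $\delta\leq 1$ is what lets you pass from the optimal exponent $\lambda_t\bigl((1+\delta)\log(1+\delta)-\delta\bigr)$ to the clean $\delta^2\lambda_t/3$. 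Alternatively one can invoke a martingale concentration theorem (the paper attributes the statement to a corollary of Theorem~3.12 in McDiarmid's survey, via~\cite{ar:SudVond}), but either way the content you must supply is the moment bound, not a rescaling.

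As a separate remark, the argument you actually wrote is a correct and in fact slightly more careful version of the paper's proof of Lemma~\ref{lem:our_concentration}: the paper rescales via $W_s=(Z_s-a)/(b-a)$ for both parities $m\in\{0,1\}$, which only keeps $W_s$ in $[0,1]$ when $-a=b$ (true in every application in the paper, but not imposed by the lemma's hypotheses), whereas your case split on $A\in\{a,-b\}$ and the observation that $y\geq -b$ when $m=1$ make the rescaling valid in general. But that lemma was not the target here.
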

\begin{proof}[Proof of Lemma~\ref{lem:our_concentration}] 
The assumption of the lemma implies that for every $s\in [t]$
$$ \Exp [\ind{s\leq \tau} (-1)^m (Y_s - Y_{s-1})  + y \ind{s>\tau} \mid \cF_{s-1}] \leq y.$$
We set  $Z_s := \ind{s\leq \tau}(-1)^m (Y_s - Y_{s-1})  + y \ind{s>\tau}$. The history  $\cF_{s-1}$ completely determines $Z_1,\dots ,Z_{s-1}$, whereby 
\begin{equation} \label{eq:cond_exp} 
\Exp [Z_s \mid Z_1,\dots, Z_{s-1}] \leq y.
\end{equation}

We now rescale the variables $Z_s$ to obtain random variables in $[0,1]$. To this end, we set 
\begin{equation} \label{eq:rescale} 
W_s := \frac{Z_s-a}{b-a}.
\end{equation}
It follows directly from~\eqref{eq:cond_exp} that
$$\Exp [W_s \mid W_1,\dots, W_{s-1}] \leq \frac{y-a}{b-a}=:w\;. $$
By Proposition~\ref{lem:concen} with $w_s:=w$ for each $s\in [t]$ and $\lambda_t:= w t$, for any $\delta \in (0,1]$, it follows that
$$ \Pro \left[ \sum_{s=1}^t W_s \geq (1 + \delta)tw \right] <  e^{-\delta^2 w t /3}\;.$$ 
Using the definition of $W_s$ in~\eqref{eq:rescale}, we obtain
\begin{equation*} 
\Pro \left[ \sum_{s=1}^t Z_s \geq(1 + \delta) (y-a)t +a t \right] \leq  e^{-\frac{\delta^2 t (y-a)}{3(b-a)}}\;. 
\end{equation*}
Recall that $a< y< 0$. Choosing $\delta =-\frac{y}{2(y-a)}\in (0,1]$, we have 
$
(1 + \delta) (y-a) t + at = yt +\delta(y-a) t = \frac{yt}{2}.
$
Using that $b-a\geq y-a$, we obtain 
\begin{equation*} 
\Pro \left[ \sum_{s=1}^t Z_s \geq \frac{ty}{2} \right] < e^{-\frac{y^2}{12(b-a)^2}\cdot t}\;. 
\end{equation*}
Finally, note that 
\begin{align*} 
\sum_{s=1}^t  Z_s &=  \ind{t>\tau} ((-1)^m(Y_{\tau} - Y_0)+ (t- \tau )y) + \ind{t\leq \tau }(-1)^m(Y_t - Y_0) \nonumber\\
&= (-1)^m (Y_{\tau \wedge t} - Y_0) + \ind{t>\tau} (t-\tau) y
\end{align*}
and the lemma follows.
\end{proof}

\subsection{Proof of Proposition~\ref{prop:explo}}

In this subsection we will prove the Proposition~\ref{prop:explo}. 
We first need three more technical statements.

\begin{lemma}\label{lem: pre_concentration}
Suppose $n\in \N$ and $1/n\ll\alpha,\beta\ll \xi\ll \eta,\rho\ll \mu, 1/\avd ,p\leq1$. 
Suppose $S_0\subseteq V$ and let $\cD$ be a degree sequence on $V$ such that $\Td\leq\avd n$ and 
$w\in S_0$ for every $w\in V$ with $d(w)> n^{1/4}$  and $\sum_{v\in S_0} d(v)\leq \alpha n$. 
Let $\tau$ be the smallest $t\leq n$ such that either $X_t>\beta n$ or $M_t< (1-\eta)\Sigma^\cD$ - if this does not exist, we set $\tau = n+1$.
Conditional on the event that  $d_{H_0}(w)=d(w)$ for every $w\in V$ with $d(w)> n^{1/4}$, then 
$$ \Pro [ \tau\leq \xi n, X_{\tau} \leq \beta n ] = o(n^{-2}). $$ 
\end{lemma}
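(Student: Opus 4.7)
My plan combines a deterministic accounting of the exploration with a concentration bound on the accumulated back-edge counts.

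Counting semi-edges at vertices of $S_t$ gives the identity $\Td - M_t = X_t + 2|E(H_t)| + 2|E(F_t)|$. A step-by-step accounting of exposures (each Case~1 step adds one edge to $F_t$; each Case~2 step adds $1 + d'_{s-1}(w_s)$ edges to $H_t$; each new-component step adds no new edge since the former $F_{s-1}$-edges at $w_s$ merely migrate into $H_s$), together with $|E(G^\cD[S_0])| \leq \alpha n/2$, yields
\begin{equation*}
\Td - M_t \;\leq\; X_t + \alpha n + 2t + 2\!\!\sum_{s\leq t,\,s\in T_2}\!\! d'_{s-1}(w_s),
\end{equation*}
where $T_2$ denotes the set of Case~2 steps. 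On $\{\tau\leq\xi n,\,X_\tau\leq\beta n\}$ the trigger of $\tau$ must be $M_\tau<(1-\eta)\Td$, so $\Td-M_\tau>\eta n$; since $\alpha+\beta+2\xi\ll\eta$, this forces $\sum_{s\leq\tau,\,s\in T_2} d'_{s-1}(w_s) > \eta n/4$, reducing the task to bounding this event's probability by $o(n^{-2})$.

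For each $s$, on the event $\{s\leq\tau\}$ the hypotheses of Lemma~\ref{lem: back edges} hold with $\nu=\beta$: $M_{s-1}\geq(1-\eta)\Td\geq n/20$, $X_{s-1}\leq\beta n$, and the initial high-degree condition is preserved because $S_0\subseteq S_{s-1}$ and every vertex of degree exceeding $n^{1/4}$ has all its neighbours inside $S_0$. Setting $G_s := \max\{0,\,d'_{s-1}(w_s) - \sqrt{\beta}\hat{d}_{s-1}(w_s)\}$, the lemma gives the tail $\Pro[G_s > i \mid \cH_{s-1}] \leq (22\sqrt{\beta})^{i+1}$ on $s\leq\tau$, hence a uniform bound $\Exp[G_s \mid \cH_{s-1}] \leq C\sqrt{\beta}$ for an absolute constant $C$, together with the decomposition $d'_{s-1}(w_s) \leq \sqrt{\beta}\hat{d}_{s-1}(w_s) + G_s$.

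The pivotal ingredient is the deterministic identity that $M$ decreases by exactly $\hat{d}_{s-1}(w_s)$ in each Case~2 step, giving
\begin{equation*}
\sum_{s\leq\tau\wedge\xi n,\ s\in T_2} \hat{d}_{s-1}(w_s) \;=\!\!\sum_{s\leq\tau\wedge\xi n,\ s\in T_2}\!\!(M_{s-1}-M_s) \;\leq\; M_0 \;\leq\; \Td \;\leq\; \avd n.
\end{equation*}
Combined with the decomposition, $\sum_{s\leq\tau\wedge\xi n,\,s\in T_2} d'_{s-1}(w_s) \leq \sqrt{\beta}\avd n + \sum_{s\leq\tau\wedge\xi n,\,s\in T_2} G_s$. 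Choosing $\beta$ in the hierarchy so that $\sqrt{\beta}\avd \leq \eta/16$ controls the first term by $\eta n/16$. For the second, setting $W_s := G_s\,\ind{s\leq\tau}/n^{1/4} \in [0,1]$ and applying Proposition~\ref{lem:concen} with $w_s := C\sqrt{\beta}/n^{1/4}$ (so that $\lambda_{\xi n} = C\sqrt{\beta}\xi n^{3/4} \leq \eta n^{3/4}/32$ by further choice of $\beta$) and $\delta=1$ yields
\begin{equation*}
\Pro\Bigl[\textstyle\sum_{s\leq\tau\wedge\xi n,\,s\in T_2} G_s > \eta n/16\Bigr] \;\leq\; e^{-\lambda_{\xi n}/3} \;=\; \exp\bigl(-\Omega(n^{3/4})\bigr) \;=\; o(n^{-2}),
\end{equation*}
finishing the argument.

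The principal obstacle is that the naive per-step estimate $\Exp[d'_{s-1}(w_s) \mid \cH_{s-1}] = O(\sqrt{\beta}n^{1/4})$ sums to $O(\sqrt{\beta}\xi n^{5/4}) \gg \eta n$, which is too large for a direct concentration bound. The global deterministic identity for the total $M$-decrease due to Case~2 steps is the pivot: it moves the troublesome $n^{1/4}$ factor out of the dominant contribution (now controlled globally by $\Td$) and into the small-mean excess $G_s$, where Proposition~\ref{lem:concen} suffices.
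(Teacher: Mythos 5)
Your proof is correct, but it takes a genuinely different route from the paper's. The paper works entirely with the process $\{X_t\}$: it derives the lower bound $X_\tau \geq \tfrac{\eta n}{8} + \tfrac12\sum_{t\in R_\tau}(\hat d_t(w_{t+1}) - 4d'_t(w_{t+1}))$ from the recursion~\eqref{eq:change}--\eqref{eq:change2} and then runs the purpose-built martingale tool Lemma~\ref{lem:our_concentration} on the signed increments $\hat d - 4d'$, exploiting the drift $\Exp[d'\mid\cH_t]\leq\tfrac{1}{10}\Exp[\hat d\mid\cH_t]$ together with $\hat d\geq 1$ to get a drift $\leq -1/2$; the contradiction is then that $X_\tau > \beta n$. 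You instead track the $M$-process through the exact edge-count identity $\Td - M_t = X_t + 2|E(H_t)| + 2|E(F_t)|$, which turns the event $M_\tau < (1-\eta)\Td$, $X_\tau\leq\beta n$ into a \emph{deterministic} lower bound on the total back-edge mass $\sum_{s\in T_2}d'_{s-1}(w_s)$; you then kill this with the decomposition $d'\leq\sqrt\beta\,\hat d + G_s$, where the main term $\sum_{s\in T_2}\hat d_{s-1}(w_s)$ is bounded deterministically by $M_0\leq\avd n$ (since each percolated step subtracts exactly $\hat d_{s-1}(w_s)$ from $M$), so only the small-mean residual $\sum G_s$ needs concentration, which the off-the-shelf Proposition~\ref{lem:concen} supplies directly. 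The gain of your route is that the dominant contribution is handled without any probability at all, and you avoid the bespoke Lemma~\ref{lem:our_concentration}; the paper's route stays closer to the machinery it already built for Lemma~\ref{lem: concentration} and needs no separate analysis of $H_t$ and $F_t$. Two minor points worth flagging for a final write-up: your bookkeeping of the new-component step implicitly reads $F_{t+1}$ as restricted to the new cut $(S_{t+1}, V\setminus S_{t+1})$ (so that the failed edges at $w_{t+1}$ migrate into $H_{t+1}$ and leave $F$), which is the sensible reading of the process definition and is needed for~\eqref{eq:change2} to hold, but it is not what the text literally says; and the final hierarchy constraint $C\sqrt\beta\,\xi\leq\eta/32$ is absorbed into $\beta\ll\xi$ in the usual informal way, so it would be cleaner to add $\beta\ll\eta^2$ explicitly to the hypotheses you quote.
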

\begin{proof}
Observe that $ \Pro [ \tau\leq \xi n, X_{\tau} \leq \beta n, M_\tau\geq (1-\eta)\Td ] = 0 $.
Thus 
\begin{align} \label{eq:1st_upper_bound}
	\Pro [ \tau\leq \xi n, X_{\tau} \leq \beta n]
	&=\Pro [ \tau\leq \xi n, X_{\tau} \leq \beta n, M_\tau< (1-\eta)\Td ]\;. 
\end{align}
Note first that if $M_\tau<(1-\eta)\Td$, then
$\sum_{v\in S_\tau} d(v)\geq \eta \Td\geq \eta n$. 
Let $R_t$ be the set of times $s \in \{0,\dots,t\}$ where the edge $v_{s+1}w_{s+1}$ has percolated
and let $R_t'$ be the set of times where $s \in \{0,\dots,t\}$ where $X_s=0$.
Therefore, we have
\begin{align}\label{eq:sum}
\sum_{t\in R_\tau} ( \hat{d}_t(w_{t+1}) +d_{F_t}(w_{t+1}))+
\sum_{t\in R_\tau'} \hat{d}_t(w_{t+1})
\geq  \sum_{v\in S_\tau} d(v) -  \sum_{v\in S_0} d(v) \geq (\eta- \alpha) n\;. 
 \end{align}
At each step $s\in \{0,\ldots,t\}$ of the process at most one edge is added to $F_s$. For every $1\leq t\leq \tau$,  it follows that 
$$
\sum_{s\in R_t} d_{F_s}(w_{s+1}) \leq t+1\leq \xi n+1\;.
$$
Taking $\alpha \ll \xi\ll \eta$ and using~\eqref{eq:sum} one concludes
\begin{align} 
\sum_{t\in R_\tau\cup R_\tau'} \hat{d}_t(w_{t+1})
 \geq \left(\sum_{t\in R_\tau} \hat{d}_t(w_{t+1})+ d_{F_t}(w_{t+1}) \right)-\xi n-1 +\sum_{t\in R_\tau'} \hat{d}_t(w_{t+1})
 \geq \frac{\eta n}{2} \;. \label{eq:sum_hat_d} 
\end{align}
From~\eqref{eq:change} and~\eqref{eq:change2}, and with $\xi\ll \eta,p$, it follows that
\begin{eqnarray}\label{eq:previous}
X_{\tau} 
&=& X_{0}- (\tau-|R_\tau|)+\sum_{t\in R_\tau} ( \hat{d}_t(w_{t+1})-2 -2d'_t(w_{t+1}))+\sum_{t\in R_\tau'} \hat{d}_t(w_{t+1})\nonumber\\
&\geq& -3\tau+ \frac{1}{2}\sum_{t\in R_\tau \cup R_{\tau}'} \hat{d}_t(w_{t+1}) +\frac{1}{2}\left(\sum_{t\in R_\tau} \hat{d}_t(w_{t+1}) -4d'_t(w_{t+1})\right)\nonumber\\
&\stackrel{ (\ref{eq:sum_hat_d})}{\geq} &\frac{\eta n}{4} - 3\xi n + \frac{1}{2}\left(\sum_{t\in R_\tau} \hat{d}_t(w_{t+1}) -4	d'_t(w_{t+1})\right)\nonumber\\
& \geq &\frac{\eta n}{8}  +\frac{1}{2}\left(\sum_{t\in R_\tau} \hat{d}_t(w_{t+1}) -4 d'_t(w_{t+1})\right) \;.
\end{eqnarray}
Therefore, we deduce that 
\begin{eqnarray} 
\lefteqn{\Pro [ \tau\leq \xi n, X_{\tau} \leq \beta n, M_\tau< (1-\eta)\Td ] \leq} \nonumber \\ 
& &\Pro \left[ \tau\leq \xi n, X_{\tau} \leq \beta n, X_\tau \geq \frac{\eta n}{8}  +\frac{1}{2}\left(\sum_{t\in R_\tau} \hat{d}_t(w_{t+1}) -4 d'_t(w_{t+1})\right)  \right]. \label{eq:2nd_upper_bound}
\end{eqnarray}

We take $\beta \ll \eta$.
So in order that $X_\tau\leq \beta n$,
it suffices to prove that $-\sum_{t\in R_\tau} (\hat{d}_t(w_{t+1}) -4 d'_t(w_{t+1}))$ is not too large.

We define the following sequence $Y_1,\ldots,Y_{\xi n}$ of random variables.
Let $Y_0:=0$.
Suppose $t$ is the $s$-th smallest entry in $R_\tau$.
We set
$$
Y_{s}:= Y_{s-1} -(\hat{d}_t(w_{t+1}) -4 d'_t(w_{t+1}));
$$
in the case where $|R_\tau|<s$ and $s\leq \xi n$, we set
$$
Y_{s}:= Y_{s-1} -1.
$$
Observe that $|Y_{s} - Y_{s-1}|\leq 4n^{1/4}$.
Let $\{\cF_s\}_{s=0}^{\xi n}$ be the filtration induced by the sequence $\{Y_s\}_{s=0}^{\xi n}$.

Suppose again that $t$ is the $s$-th smallest entry in $R_\tau$.
We apply Lemma~\ref{lem: back edges} with  $\nu=\beta$, $Z'=S_t$, $H'=G^\cD[S_t]$, $F'=F_t$, $x=w_{t+1}$, $z=v_{t+1}$ and $\cE=\{xz\in E(G^\cD)\}$. 
The first three conditions of the lemma are satisfied: the first one is immediate from our hypothesis, the second one follows from $X_t\leq \beta n$ and the third one from the fact that $M_t-\sum_{w\in V\sm S_t} d_{F_t}(w)\geq (1-\eta) \Td-t\geq n/20$. 
Moreover, $xz\notin E(F')$ holds. Similarly as in~\eqref{eq:back_edges}, we obtain,
$$
\Exp[d'(w_{t+1})\mid \cH_t]\leq \frac{1}{10}\Exp[\hat{d}(w_{t+1})\mid \cH_t]\;.
$$
Let $t_{-1}$ be defined such that $t_{-1}-1$ is the $(s-1)$-th smallest entry in $R_\tau$ or $-1$ if $s=1$.
Observe that given $\cH_{t_{-1}}$
we still can apply Lemma~\ref{lem: back edges} to any possible input with history $\cH_t$.
This implies that 
$$
\Exp[d'(w_{t+1})\mid \cH_{t_{-1}}]\leq \frac{1}{10}\Exp[\hat{d}(w_{t+1})\mid \cH_{t_{-1}}]\;.
$$
Clearly $\mathbb{E}[\hat{d}_t(w_{t+1})|\cH_{t_{-1}}]\geq 1$, so
$$
\Exp[Y_{s+1}-Y_s| \cF_{s}]\leq -\frac{1}{2}\;.
$$

We can apply Lemma~\ref{lem:our_concentration} to the collection $Y_0,\ldots Y_{\xi n}$ with $-a=b=4n^{1/4}, y= -1/2, m=0$ and $t = \xi n$,
where $\tau$ is the stopping time $\tau=\xi n$,
to conclude that
$$
\Pro\left[-Y_{\xi n}< \frac{\xi n}{4}\right]=o(n^{-2}).
$$
Observe that by construction of $Y_s$ we have $\sum_{t\in R_{\tau\wedge \xi n}} \left(\hat{d}_t(w_{t+1}) -4 d'_t(w_{t+1})\right)\geq - Y_{\xi n}-\xi n$. 
Hence
\begin{equation} \label{eq:bad_e}
\Pro\left[\sum_{t\in R_{\tau\wedge \xi n}} \left(\hat{d}_t(w_{t+1}) -4 d'_t(w_{t+1})\right) 
< -\frac{3\xi n}{4} \right] 
= o(n^{-2})\;.
\end{equation}
So by~\eqref{eq:2nd_upper_bound} we can further bound 
\begin{eqnarray*} 
\lefteqn{\Pro [ \tau\leq \xi n, X_{\tau} \leq \beta n, M_\tau< (1-\eta)\Td ] \leq} \\ 
& &\Pro \left[ \tau\leq \xi n, X_{\tau} \leq \beta n, X_\tau \geq \frac{\eta n}{8}  -  \frac{3\xi n}{8}   \right]   
 + \Pro \left[ \sum_{t\in R_{\tau\wedge \xi n}} \left(\hat{d}_t(w_{t+1}) -4 d'_t(w_{t+1})\right) 
< -\frac{3\xi n}{4} \right] \\
&\stackrel{\eqref{eq:bad_e}}{\leq} &
\Pro \left[ \tau\leq \xi n, X_{\tau} \leq \beta n, X_\tau \geq \frac{\eta n}{8}  -  \frac{3\xi n}{8}   \right]    
+ o(n^{-2}).
\end{eqnarray*}
We will show that the first term is 0.
Indeed, 
$$
X_\tau>  \left( \frac{\eta}{8} - \frac{3\xi}{8}\right) n \geq \frac{\eta}{16}\cdot n > \beta n\; ;
$$
so this cannot hold simultaneously with $X_\tau \leq \beta n$. 
Finally by~\eqref{eq:1st_upper_bound}, we conclude that the probability that $\tau \leq \xi$ and 
$X_\tau \leq \beta n$ is $o(n^{-2})$.
\end{proof}

\begin{lemma}\label{lem: concentration}
Suppose $n\in \N$ and $1/n\ll\alpha\ll\beta\ll \eta,\rho\ll \mu,1/\avd ,p\leq1$. 
Let $S_0\subseteq V$ and let $\cD$ be a degree sequence on $V$ such that $\Td\leq\avd n$
and $w\in S_0$ for every $w\in V$ with $d(w)> n^{1/4}$,  and $\sum_{v\in S_0} d(v)\leq \alpha n$. 
Let $\tau$ be the smallest $t\leq n$ such that either $X_t>\beta n$ or $M_t< (1-\eta)\Sigma^\cD$ - if this does not exist, we set $\tau = n+1$.
Conditional on the event that  $d_{H_0}(w)=d(w)$ for every $w\in V$ with $d(w)> n^{1/4}$, the following holds:
\begin{enumerate}[label={\rm (\roman*)}]
\item If $\sum_{u\in V\sm S_0}d(u)(p(d(u)-1)-1)\leq -\mu n$  and $\tau_1$ is the smallest $t$ such that $X_{t}=0$, then the probability that $\tau_1 >\rho n$ is $o(1/n)$.

\item  If $R_p^{\cD} \geq \mu n$, then the probability that  $\tau >\rho n$ or that $X_\tau \leq\beta n$ is $o(1/n)$.
\end{enumerate}
\end{lemma}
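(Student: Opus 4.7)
The strategy is to track the process $\{X_t\}$ via Lemma~\ref{lem:our_concentration}, using the drift bounds of Lemmas~\ref{lem: exp change upper} (for part~(i)) and~\ref{lem: back edges2} (for part~(ii)), and to exclude premature stopping due to $M_t<(1-\eta)\Td$ by invoking Lemma~\ref{lem: pre_concentration}. In both parts, every vertex added to $S_t$ after step~$0$ has degree at most $n^{1/4}$ by hypothesis, so the increments $X_s-X_{s-1}$ lie in an interval $[-O(n^{1/4}),n^{1/4}]$ and $(b-a)^2=O(\sqrt n)$ in the notation of Lemma~\ref{lem:our_concentration}.

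For \textbf{part~(i)}, fix $\lambda$ with $\beta,\rho,\eta\ll\lambda\ll\mu,1/\avd,p$, so that Lemma~\ref{lem: exp change upper} yields $\Exp[X_s-X_{s-1}\mid\cF_{s-1}]\leq-\lambda$ for every $s\leq\tau\wedge\tau_1$. Choose $\xi$ in the hierarchy of Lemma~\ref{lem: pre_concentration} with $\beta\ll\xi\ll\eta,\rho$, and set $t_0:=\lceil 3\alpha n/\lambda\rceil$; since $\alpha\ll\xi\lambda$, we have $t_0\leq\xi n\leq\rho n$. Apply Lemma~\ref{lem:our_concentration} with $Y=X$, $m=0$, $y=-\lambda$, stopping time $\tau\wedge\tau_1$, and horizon $t_0$. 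The tail is $\exp(-\Omega(\lambda\alpha\sqrt n))=o(1/n)$, and on the favorable event
\[
X_{(\tau\wedge\tau_1)\wedge t_0}\;\leq\; X_0-\tfrac{\lambda t_0}{2}+\lambda(t_0-\tau\wedge\tau_1)\mathbf{1}_{\tau\wedge\tau_1<t_0}.
\]
If $\tau\wedge\tau_1\geq t_0$, the right-hand side is at most $\alpha n-\tfrac{3}{2}\alpha n<0$, contradicting $X\geq 0$; hence $\tau\wedge\tau_1<t_0$ and
$X_{\tau\wedge\tau_1}\leq X_0+\tfrac{\lambda t_0}{2}\leq \tfrac{5}{2}\alpha n<\beta n$, so stopping cannot be via $X>\beta n$. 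Either $X_{\tau\wedge\tau_1}=0$, giving $\tau_1=\tau\wedge\tau_1<\rho n$ as required, or $M_{\tau\wedge\tau_1}<(1-\eta)\Td$; the second scenario, combined with $\tau\leq t_0\leq\xi n$ and $X_\tau<\beta n$, has probability $o(n^{-2})$ by Lemma~\ref{lem: pre_concentration}. A union bound yields $\Pro[\tau_1>\rho n]=o(1/n)$.

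For \textbf{part~(ii)}, pick $\lambda$ satisfying the same hierarchy required by Lemma~\ref{lem: back edges2}, so $\Exp[X_s-X_{s-1}\mid\cF_{s-1}]\geq\lambda$ for $s\leq\tau$ (the hypotheses hold because $s\leq\tau$ enforces $M_{s-1}\geq(1-\eta)\Td$ and $X_{s-1}\leq\beta n$, and $R_p^\cD\geq\mu n$ is given). With the same $\xi$, set $t_0':=\lceil 3\beta n/\lambda\rceil$; by restricting the hierarchy further (possible since $\beta\ll\xi\lambda$ and $\beta\ll\rho\lambda$), we can ensure $t_0'\leq\xi n\leq\rho n$. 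Apply Lemma~\ref{lem:our_concentration} with $m=1$, $Y=X$, $y=-\lambda$, stopping time $\tau$, and horizon $t_0'$: with probability $1-o(1/n)$,
\[
X_{\tau\wedge t_0'}\;\geq\; X_0+\tfrac{\lambda t_0'}{2}-\lambda(t_0'-\tau)\mathbf{1}_{t_0'>\tau}\;\geq\;\tfrac{3\beta n}{2}-\lambda(t_0'-\tau)\mathbf{1}_{t_0'>\tau}.
\]
If $\tau\geq t_0'$, then $X_{t_0'}\geq \tfrac{3}{2}\beta n>\beta n$, contradicting the definition of $\tau$; hence $\tau<t_0'\leq\rho n$. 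At $\tau$ the trigger is either $X_\tau>\beta n$ (the desired conclusion) or $M_\tau<(1-\eta)\Td$, the latter being ruled out with probability $o(1/n)$ by Lemma~\ref{lem: pre_concentration} since $\tau\leq\xi n$. Combining gives $\Pro[\tau>\rho n\text{ or }X_\tau\leq\beta n]=o(1/n)$.

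The principal obstacle is the simultaneous control of the three stopping triggers $\{X=0\}$, $\{X>\beta n\}$ and $\{M<(1-\eta)\Td\}$: a naive application of the concentration inequality with horizon $\rho n$ does not prevent $X_t$ from fluctuating past $\beta n$ at earlier times, and there is no direct control of $M_t$ for $t>\xi n$. The resolution is to choose the horizon on the much shorter scale $t_0=O(\alpha n)$ (resp.\ $t_0'=O(\beta n)$), which is still long enough for the drift to cover the required distance but short enough that Lemma~\ref{lem: pre_concentration} applies and the concentration bound pins $X_t$ below $\beta n$ throughout; the nested hierarchy $\alpha\ll\beta\ll\xi\ll\eta,\rho\ll\lambda\ll\mu,1/\avd,p$ supplies exactly the flexibility required.
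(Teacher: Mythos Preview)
Your proof is correct and follows the same strategy as the paper: couple the drift estimates from Lemmas~\ref{lem: exp change upper} and~\ref{lem: back edges2} with the concentration inequality of Lemma~\ref{lem:our_concentration}, and use Lemma~\ref{lem: pre_concentration} to exclude the $M_t<(1-\eta)\Td$ trigger. The execution is somewhat tidier than the paper's: where the paper introduces an auxiliary scale $\nu$ with $\alpha\ll\nu\ll\beta$ and runs the concentration bound at two separate horizons ($\nu n$ and $\xi n$) with a case analysis on the location of $\tau$, you collapse this into a single application at the tailored horizon $t_0=\lceil 3\alpha n/\lambda\rceil$ (respectively $t_0'=\lceil 3\beta n/\lambda\rceil$), which is long enough for the drift to force the desired inequality yet short enough to stay inside the window $[0,\xi n]$ where Lemma~\ref{lem: pre_concentration} applies. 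One cosmetic point: in part~(ii), the contradiction argument only gives $\tau\le t_0'$ (if $\tau=t_0'$ and $X_{t_0'}>\beta n$ there is no contradiction), not the strict $\tau<t_0'$ you state; this does not affect the conclusion $\tau\le\rho n$.
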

\begin{proof}
Recall that $I(uv)$ is the indicator random variable that is equal to 1 if and only if $uv\in E(G^\cD)$ survives percolation when it is exposed. 
Also, recall~\eqref{eq:change}: if $X_t>0$, then
$$
X_{t+1}-X_t= -(1-I(v_{t+1}w_{t+1}))+I(v_{t+1}w_{t+1})((\hat{d}_t(w_{t+1})-2) - 2d_t'(w_{t+1}))\;.
$$

We first prove (i). 
Consider the sequence $Y_0, Y_1, \ldots$ of random variables such that $Y_0:=X_0$ and
$$
Y_s:= Y_{s-1}+\ind{s\leq \tau \wedge \tau_1} \left(X_{s}-X_{s-1}\right).
$$
Thus $|Y_s-Y_{s-1}|\leq 2n^{1/4}$.  
Let us take $\lambda$ satisfying $\eta,\rho\ll \lambda \ll \mu,1/\avd,p$. 
Since $X_{s-1}>0$ if $s\leq \tau\wedge \tau_1$, by Lemma~\ref{lem: exp change upper}, we have
$$
\Exp[Y_s-Y_{s-1}| \cH_{s-1}]\leq -\lambda \ind{s\leq \tau\wedge \tau_1}=:y\ind{s\leq \tau\wedge \tau_1}\;.
$$ 
Let $\nu$ and $\xi$ be such that  $\alpha\ll \nu\ll\beta \ll \xi \ll \eta,\rho$.
We now apply Lemma~\ref{lem:our_concentration} to $Y_s$ with $-a=b=2n^{1/4}$, $m=0$ and $t = c n$, for some $c$ such that $1/n \ll c<1$, to conclude that 
\begin{align} \label{eq:upper_tail} 
\Pro \left[ (X_{\tau\wedge \tau_1 \wedge c n} - X_0)- \ind{c n > \tau\wedge \tau_1} (c n- \tau\wedge \tau_1 )\lambda \leq -\frac{\lambda}{2}\cdot c n \right]
&\geq
1 -e^{-\frac{\lambda^2}{48}\cdot c n^{1/2}}\notag\\
&\geq  1-n^{-2}. 
\end{align}
Let $\mathcal{E}_1(t)$ denote the event $ (X_{\tau\wedge \tau_1\wedge t} - X_0)- \ind{t > \tau\wedge \tau_1} (t- \tau\wedge \tau_1 )\lambda \leq -\frac{\lambda}{2}\cdot t$.
We use Lemma~\ref{lem: pre_concentration} (for the second inequality below) and we write
\begin{eqnarray*}
\Pro [\tau_1 > \rho n] &=& \Pro[\tau_1 > \rho n, \tau \leq \xi n] + \Pro[\tau_1 > \rho n, \tau > \xi n] \\
& \leq& \Pro[\tau_1 > \rho n, \tau \leq \xi n, X_{\tau} > \beta n] + n^{-2} + \Pro[\tau_1 > \rho n, \tau > \xi n] \\ 
&\leq & \Pro[\tau_1 > \rho n, \tau \leq \xi n, X_{\tau} > \beta n, \mathcal{E}_1(\nu n)] + \Pro [\overline{ \mathcal{E}_1(\nu n)}] + n^{-2} + \Pro[\tau_1 > \rho n, \tau > \xi n] \\
&\stackrel{(\ref{eq:upper_tail})}{=}& \Pro[\tau_1 > \rho n, \tau \leq \xi n, X_{\tau} > \beta n, \mathcal{E}_1(\nu n)] + 2n^{-2} + \Pro[\tau_1 > \rho n, \tau > \xi n].
\end{eqnarray*}
Suppose that the events $\tau_1>\rho n, \frac{3\nu}{4} n\leq \tau \leq \xi n, X_{\tau} > \beta n$ and $\mathcal{E}_1(\nu n)$ are realised simultaneously. 
Recall that  $\alpha\ll \nu \ll\beta \ll \xi \ll \rho\ll \lambda$. 
Since $\tau_1>\rho n$, we have $X_{\tau\wedge \tau_1 \wedge \nu n}>0$.
Then, we reach a contradiction in the following way
$$
0< X_{\tau\wedge \tau_1 \wedge \nu n} \leq -\frac{\lambda}{2}\nu n  + X_0 +\ind{\nu n>\tau\wedge \tau_1}(\nu n- \tau\wedge \tau_1)\lambda\leq -\frac{\lambda\nu}{8} n  <0\;.
$$
Suppose that the events $\tau_1 > \rho n$, $\tau \leq \frac{3\nu}{4} n, X_{\tau} > \beta n$ and $\mathcal{E}_1(\nu n)$ are realised simultaneously. 
Again, we reach a contradiction as follows
$$
\beta n< X_{\tau \wedge \tau_1\wedge \nu n} = X_{\tau} \leq -\frac{\lambda}{2}\nu n  + X_0 +(\nu n- \tau)\lambda\leq 2\lambda\nu n  < \beta n\;.
$$
Hence, $\Pro[\tau_1 > \rho n,\tau \leq \xi n, X_{\tau} > \beta n, \mathcal{E}_1(\nu n)] =0$.

Thereby, 
\begin{eqnarray*}
\Pro [\tau_1 > \rho n] &\leq&  \Pro[\tau_1 > \rho n, \tau > \xi n] + 2n^{-2} \\ 
&\leq & \Pro[\tau_1 > \rho n, \tau > \xi n,\mathcal{E}_1(\xi n)] + \Pro [\overline{ \mathcal{E}_1(\xi n)}] + 2n^{-2} \\
& \stackrel{(\ref{eq:upper_tail})}{=}& \Pro[\tau_1 > \rho n, \tau > \xi n,\mathcal{E}_1(\xi n)] + 3n^{-2}.
\end{eqnarray*}
But again the event $\tau_1 > \rho n, \tau > \xi n$ cannot occur simultaneously with $\mathcal{E}_1(\xi n)$, since otherwise,
$$
X_{\xi n} =X_{\tau\wedge \tau_1\wedge \xi n} \leq -\frac{\lambda}{2}\xi n+ X_0 <0\;.
$$
We conclude that $ \Pro [\tau_1 > \rho n] \leq 3n^{-2}.$
\medskip


\noindent
We proceed to prove (ii). 
Let $Y_0:=0$. 
For $s\geq 1$, 
consider the random variable 
$$
Y_s:= Y_{s-1}-\ind{s\leq \tau} \left(X_{s}-X_{s-1}\right)\;.
$$
By the second part of Lemma~\ref{lem: back edges2},
$$
\Exp[Y_s| \cH_{s-1}]\leq -\lambda\ind{s\leq \tau}=:y\ind{s\leq \tau}\;.
$$ 
Let us take $\xi$ and $\lambda$ such that $\beta \ll \xi \ll \eta,\rho\ll \lambda \ll \mu,1/\avd,p$.
Similarly as before, we can apply Lemma~\ref{lem:our_concentration} to the random variables $Y_s$ with $-a=b=2n^{1/4}$, $m=1$ and 
$t =\xi n$ to conclude that 
\begin{equation} \label{eq:upper_tail2} 
\Pro \left[ (X_{\tau\wedge \xi n} - X_0)+  \ind{\xi n > \tau} (\xi n -  \tau )\lambda>\frac{\lambda}{2}\cdot \xi n \right] \geq
1 -e^{-\frac{\lambda^2}{48}\cdot \xi n^{1/2}} = 1-o(n^{-2}). 
\end{equation}
Now, let $\mathcal{E}_2(\xi n)$ denote the event $X_{\tau\wedge \xi n} - X_0+  \ind{\xi n > \tau} (\xi n -  \tau )\lambda>\frac{\lambda}{2}\cdot \xi n$. 

Letting $\xi \ll \rho$, we then have 
\begin{eqnarray*}
\Pro [\tau > \rho n ]  \leq   \Pro [\tau > \xi n ]
& \leq  & \Pro [\tau > \xi n, \mathcal{E}_2(\xi n) ] + \Pro[\overline{\mathcal{E}_2(\xi n) }]\\
& \stackrel{(\ref{eq:upper_tail2})}{\leq}  & \Pro [\tau > \xi n, \mathcal{E}_2(\xi n) ] + n^{-2}\;.
\end{eqnarray*}
But if $ \tau > \xi n$ holds simultaneously with $ \mathcal{E}_2(\xi n)$, then 
we have $X_{\xi n}=X_{\tau \wedge \xi n} > X_0 + \frac{\lambda\xi}{2}\cdot n \geq \beta n$ which contradicts that $\tau > \xi n$.   
So, this event has probability 0. 

Similarly, using Lemma~\ref{lem: pre_concentration} (for the first inequality) we obtain
\begin{eqnarray*}
\Pro [X_{\tau} < \beta n ] &=& \Pro [X_{\tau} < \beta n, \tau \leq \xi n] +  \Pro [X_{\tau} < \beta n, \tau > \xi n] \\
&\leq& n^{-2} + \Pro [X_{\tau} < \beta n, \tau > \xi n]  \\
&\leq& n^{-2} + \Pro [X_{\tau} < \beta n, \tau > \xi n, \mathcal{E}_2(\xi n)] + \Pro [\overline{\mathcal{E}_2(\xi n)}] \\
&\stackrel{(\ref{eq:upper_tail2})}{=}& n^{-2} + \Pro [X_{\tau} < \beta n, \tau > \xi n, \mathcal{E}_2(\xi n)].
\end{eqnarray*}
As above, $ \tau > \xi n$ and $ \mathcal{E}_2(\xi n)$ are incompatible. 
Therefore, the second event has probability 0, whereby we deduce that $\Pro[X_\tau<\beta n]=o(1/n)$.
\end{proof}

\begin{lemma}\label{lem: edges to vertices}
Let $1/n\ll\nu, \rho\ll 1/\avd ,p\leq1$. 
Suppose that $S\subseteq V$ with $|S|\leq \rho n+1$ and $U\subseteq S$.
Suppose $H$ is a graph with vertex set $S$ and $F$ is a bipartite graph with vertex partition $(S,V\sm S)$ and $|E(F)|\leq \rho n$.
Let $\cD$ be a degree sequence on $V$ such that $\Td\leq\avd n$ and, moreover,
$d(w)=d_H(w)$ for every $w\in V$ with $d(w)\geq n^{1/4}$ and
$\sum_{u\in U}(d(u)-d_H(u)-d_F(u) ) \geq \nu n$.

Conditional on $G^\cD[S]=H$ and on $F\subseteq G^\cD$ being the set of edges between $S$ and $V\sm S$ that have failed to percolate in $G_p^\cD$,
the probability that the union of components of $G^\cD_p$ that intersect $U$ contains at most $(\nu p/(20\avd))n $ vertices is $o(1)$.
\end{lemma}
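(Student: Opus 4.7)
The plan is to analyse the one-hop reach from $U$ in $G_p^\cD$: namely, the set of vertices $v\in V\sm S$ that are connected to some $u\in U$ via a surviving edge belonging to $E(G^\cD)\sm E(F)$. I will show that this set has size at least $\nu p n/(20\avd)$ with probability $1-o(1)$; since these vertices all lie in the union of components of $G_p^\cD$ meeting $U$, this suffices.

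First I would record that $M:=\sum_{v\in V\sm S}\hat{d}(v)\geq n/3$, where $\hat{d}(w):=d(w)-d_H(w)-d_F(w)$: since $d(v)\geq 1$ for every $v$ we have $\sum_{v\in V\sm S}d(v)\geq (1-\rho)n$, while $\sum_{v\in V\sm S}d_F(v)\leq |E(F)|\leq \rho n$. Combined with the fact that every vertex of $V\sm S$ has degree at most $n^{1/4}$ (the remaining high-degree vertices are all in $S$ with all edges in $H$), this puts us in the regime where switching-type estimates in the spirit of Lemmas~\ref{lem: vertices adjacent},~\ref{lem: back edges}, and~\ref{lem: prob next} are meaningful for pairs of vertices in $U\times(V\sm S)$.

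Next I would fix a vertex $v\in V\sm S$ with $\hat{d}(v)\geq 1$ and let $\deg_B(v)$ count the number of edges of $E(G^\cD)\sm E(F)$ from $v$ to $U$. A switching argument tailored to the present setting, analogous in structure to the proof of Lemma~\ref{lem: prob next}, is expected to give the marginal estimate $\Pro[uv\in E(G^\cD)\mid\text{conditioning}]=(1\pm o(1))\hat{d}(u)\hat{d}(v)/M$ for each $u\in U$; summing over $u$ produces $\Exp[\deg_B(v)]=(1\pm o(1))\hat{d}(v)k/M$ with $k:=\sum_{u\in U}\hat{d}(u)\geq \nu n$. Because $\deg_B(v)\leq \hat{d}(v)$ deterministically, the elementary bound $\Pro[\deg_B(v)\geq 1]\geq \Exp[\deg_B(v)]/\hat{d}(v)$ yields $\Pro[\deg_B(v)\geq 1]\geq (1-o(1))k/M$; multiplying by $p$ (the probability that at least one of the $\deg_B(v)$ edges percolates, independent of the graph) gives $\Pro[v\in T]\geq (1-o(1))pk/M$, where $T$ denotes the union of components of $G_p^\cD$ meeting $U$. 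Summing over the at least $n/2$ vertices $v\in V\sm S$ with $\hat{d}(v)\geq 1$ (the rest would contribute $\geq 1$ each to $|E(F)|\leq \rho n$) produces the first-moment bound $\Exp[|T|]\geq p\nu n/(3\avd)$.

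For concentration I would proceed in two stages. Conditional on $G^\cD$, the quantity $|T\cap (V\sm S)|$ is a function of at most $k\leq \avd n$ independent percolation bits, and flipping any one bit changes it by at most $1$; McDiarmid's inequality therefore yields $|T\cap (V\sm S)|\geq \Exp[|T\cap(V\sm S)|\mid G^\cD]-o(n)$ with probability $1-o(1)$. A switching-based martingale argument, in the same spirit as the Doob-martingale concentrations underlying the proof of Proposition~\ref{prop:explo}, will then be used to show that with probability $1-o(1)$ over $G^\cD$ the conditional mean is at least $p\nu n/(10\avd)$. Combining the two estimates produces $|T|\geq p\nu n/(20\avd)$ with probability $1-o(1)$.

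The main difficulty will be establishing the switching estimate $\Exp[\deg_B(v)]\approx \hat{d}(v)k/M$: Lemma~\ref{lem: prob next} requires the hypothesis $\sum_{w\in Z}\hat{d}(w)\leq \nu'n$ for small $\nu'$, but we cannot verify this with $Z=S$ since the assumptions provide only a \emph{lower} bound $\sum_{u\in U}\hat{d}(u)\geq \nu n$ on the total free degree of $S$. A bespoke two-sided switching, exploiting that $d(w)\leq n^{1/4}$ for $w\in V\sm S$ so that back-edge counts from such vertices remain controllable, will therefore be needed in place of a black-box invocation of Lemma~\ref{lem: prob next}.
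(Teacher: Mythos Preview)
Your high-level target --- show that many vertices of $V\sm S$ are joined to $U$ by a surviving non-$F$ edge --- is exactly the paper's target.  The execution, however, diverges, and your route has a real gap in the concentration step.

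The paper does \emph{not} compute a first moment and then concentrate.  It works directly on the size of the neighbourhood: set $\hat G:=G^\cD - E(F)$ and let $\cF_k$ be the event $|N_{\hat G}(U)|=k$ under the given conditioning.  For $k<K:=\lfloor\nu n/(5\avd)\rfloor$ a single switch (pick an edge $uv$ with $u\in U$ and $v\in N_{\hat G}(U)$ having a second $U$-neighbour, and an edge $xy$ with $x\notin S\cup N_{\hat G}(U)$ at distance~$\geq 2$) gives $\Pro[\cF_k]\leq\tfrac45\,\Pro[\cF_{k+1}]$; summing, $|N_{\hat G}(U)|\geq K/2$ with probability $1-o(1)$.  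Each such vertex is joined to $U$ by at least one unexposed edge, so a Chernoff bound on percolation of one chosen edge per vertex gives $\geq (\nu p/20\avd)n$ survivors.  No expectation/variance computation, no martingale on $\cG^\cD$.

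Your Stage~2 (``a switching-based martingale argument, in the same spirit as the Doob-martingale concentrations underlying the proof of Proposition~\ref{prop:explo}'') is where the plan breaks.  Proposition~\ref{prop:explo} does not concentrate a static functional of $G^\cD$ via a Doob martingale; it tracks an exploration process and applies Lemma~\ref{lem:our_concentration} to the increments $X_{t+1}-X_t$.  There is no ready-made martingale filtration on $\cG^\cD$ in the paper that would let you concentrate $\Exp[|T\cap(V\sm S)|\mid G^\cD]$ around its mean, and the switching identity~\eqref{eq:fund} gives ratios of probabilities, not a product structure.  In effect, to carry out your Stage~2 you would end up having to show that $|N_{\hat G}(U)|$ is large with high probability --- which is exactly the paper's one-step switching, making the first-moment detour redundant.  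You also correctly flag that Lemma~\ref{lem: prob next} is unavailable here (its small-free-degree hypothesis on $Z$ fails), and the ``bespoke two-sided switching'' you promise for $\Exp[\deg_B(v)]$ is left unspecified; the paper never needs such an estimate.
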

\begin{proof}
We may assume that $|U|< (\nu p/(20\avd)) n$. 
Let $\hat{G}^\cD:=G^\cD-E(F)$. 
Our aim is to show that $N_{\hat{G}^\cD}(U)\subseteq N_{G^\cD}(U)$ is typically large.  
Note that for every vertex $w\in N_{\hat{G}^\cD}(U)$, 
there is at least one edge $uw\in E(\hat{G}^\cD)$ with $u\in U$ that has not been exposed to percolation. 
In the second part of the proof, we will show that many of these edges are preserved in $G^\cD_p$, implying that the union of components of $G_p^\cD$ that intersect $U$  contains many vertices.

Let $K:=\lfloor (\nu /5 \avd)n\rfloor$. For every $k<K$, let $\cF_k$ be the set of graphs $G$ with degree sequence $\cD$ such that $G[S]=H$, $F\subseteq G$ and  $|N_{\hat{G}}(U)|=k$, where $\hat{G}:=G- E(F)$. 
In order to estimate the probability of each $\cF_k$ we only use edges not contained in $E(H)\cup E(F)$ for a switch.

Consider a graph in $\cF_k$. There are at least $\nu n-k\geq 4\nu n/5$ choices for an edge $uv\in E(\hat{G})$ with $u\in U$, 
$v\in N_{\hat{G}}(U)$ and such that there exists $u'\neq u$ with $u'\in U$ and $u'v\in E(\hat{G})$.
Since $\delta(G)\geq 1$ and $d(w)\leq n^{1/4}$ for every $w\in \{u\}\cup N_{\hat{G}}(U)$, there are 
at least $(n-|S\cup N_{\hat{G}}(U)|)/2-|E(F)| -2n^{1/2}\geq n/3$ edges $xy\in E(\hat{G})$ with $x\notin S\cup N_{\hat{G}}(U)$ and which are in distance at least $2$ from $uv$. 
Thus, the total number of such switches into a graph in $\cF_{k+1}$ is at least $\nu n^2/4$.

Given a graph in $\cF_{k+1}$, then there are at most $(k+1)\avd n$ switches that transform it into a graph in $\cF_k$.

Thus, for every $k< K$, we have
\begin{align*}
\Pro[\cF_k]&\leq \frac{(k+1)\avd n}{\nu n^2 /4}\cdot\Pro[\cF_{k+1}]\leq \frac{4}{5}\cdot \Pro[\cF_{k+1}]\;,
\end{align*}
which implies
$$
\Pro[\cup_{k\leq K/2} \cF_k]\leq \Pro[\cF_{K/2}]\sum_{i\geq 0} (4/5)^{-i}\leq (4/5)^{- K/2+1} \Pro[\cF_{K}]=o(1)\;.
$$
That is, with probability $1-o(1)$, there are at least $(\nu/10 \avd)n$ vertices that are connected to $U$ by at least one edge in $\hat{G}^\cD$. These edges have still not been exposed for percolation. Chernoff's inequality (Lemma~\ref{lem:Chernoff}) now implies that with probability $1-o(1)$ a proportion of at least $p/2$ of them will be retained in $G^\cD_p$. Therefore, with probability $o(1)$, we have $|N_{G^\cD_p}(U)|\leq (\nu p/(20\avd)) n$. The conclusion follows.
\end{proof}

\begin{proof}[Proof of Propostion~\ref{prop:explo}]
We start with the first statement.
Suppose there exists a set $S\subseteq V$ such that $d(v)\leq n^{1/4}$ for every $v\notin S$, 
and for every  possible choice of $G$ with degree sequence $\cD$, we have
$\sum_{u\in N_G[S]}d(u)\leq \alpha n$
and $\sum_{u\in V\sm N_G[S]}d(u)(p(d(u)-1)-1)\leq -\mu n$.

We show that every vertex $u\in V$ is in a component of size at least $\gamma n$ with probability $o(1/n)$.
A union bound over all vertices completes the proof.

Suppose $u\in V$. 
We prove the desired statement conditional on every possible neighbourhood of $S$.
Thus let $S_0:=N[S]\cup \{u\}$ for some choice of $N[S]$.
Hence $\sum_{u\in S_0}d(u)\leq 2\alpha n $ and $\sum_{u\in V\sm S_0}d(u)(p(d(u)-1)-1)\leq -\mu n/2$.
Moreover,
for every vertex $v\in V$ with $d(v)>n^{1/4}$,
all its neighbours belong to $S_0$.
We apply the first part of Lemma~\ref{lem: concentration} with $\rho = \gamma /2$.
Since $\gamma\ll \mu$,  there exists a 
$t\leq \gamma n/2$ such that $X_t=0$ with probability $1-o(1/n)$. 
Since 
$|S_t|\leq t + |S_0| \leq (\gamma/2  + 2\alpha) n<\gamma n$, 
the union of all components that intersect $\{u\} \cup N[S]$ contain less than $\gamma n$ vertices with probability $1-o(1/n)$. 
\medskip

Now we prove the second statement.
Recall that now $\Delta(\cD)\leq n^{1/4}$.
Let $S_0:=\{u_0\}$ for an arbitrary vertex $u_0\in V$. 
Clearly,  $\sum_{v\in S_0}d(v)\leq \alpha n$.
Recall that $X_t$ counts the number of edges between $S_t$ and $V\sm S_t$ in the graph $G^\cD$ that have not yet been exposed for percolation. 
Observe that all the edges counted by $X_t$ will belong to the same component of $G^\cD_p$ if they survive percolation. Note that this component may not contain $u_0$. 

We choose $\beta$ and $\rho$ such that $\gamma\ll\beta \ll \rho \ll\mu$. 
By the second part of Lemma~\ref{lem: concentration}, with probability $1-o(1)$, 
there exists a $\tau\leq \rho n$ with $X_\tau\geq \beta n$. Recall that $\cH_\tau$ denotes the history of the exploration process, with the corresponding choice of $S_\tau$, $H_\tau$ and 
$F_\tau$ at time $\tau$. Let $U$ be the set of vertices from the component of $G^\cD_p$ under exploration at time $\tau$ that have been already explored; that is, the ones in $S_\tau$. Then 
$\sum_{u\in U} (d(u)-d_{H_\tau}(u)-d_{F_\tau}(u))=X_\tau \geq \beta n$. 
Moreover, $|S_\tau| \leq \tau +1\leq \rho n+1$ and $|E(F_\tau)| \leq \tau \leq \rho n$.
By Lemma~\ref{lem: edges to vertices} with $\nu=\beta$, $S=S_\tau$, $H=H_\tau$ and 
$F=F_\tau$, with probability $1-o(1)$, there exists a component in $G^\cD_p$ with at least $(\beta p/ (20 \avd))n\geq \gamma n$ vertices.
\end{proof}




\section{Degree sequences with many vertices of high degree} \label{sec:very_rob}

In this section we prove Proposition~\ref{prop:very_rob}.
As in the proof of Theorem~\ref{thm:rob},
we adapt our argumentation according to the structure of the degree sequence.
If not stated otherwise, we always consider a degree sequence $\cD$ on $V$
with average degree at most $\avd$, where $V$ is a set of size $n$. Recall that $\avd$ is assumed to be fixed.
In addition, we assume $1/n \ll 1/\avd\leq 1$.
We start with some notation, which we use throughout this section.
Let 
\begin{align*}
T&:=\{u\in V: d(u)\leq 3 \avd\}, \\
S_1&:=\{u\in V: d(u)\geq \log^2 n\},\\
S_2&:=\{u\in V: d(u)\geq n^{1/3}\}, \text{ and}\\
S_3&:=\{u\in V: d(u)\geq  n^{4/5}\}.
\end{align*}
We say $\cD$ satisfies $(D^1_\epsilon)$ if
\begin{align}\tag{$D^1_\epsilon$}\label{eq:De1}
	\sum_{u\in S_1} d(u)\geq \epsilon n\;,
\end{align}
and we say it satisfies $(D^3_\epsilon)$ if
\begin{align}\tag{$D^3_\epsilon$}\label{eq:De}
	\sum_{u\in S_3} d(u)\geq \frac{\epsilon n}{10}\;.
\end{align}

\subsection{Degree sequences with vertices of very high degree}

In this subsection we consider degree sequences $\cD$ that satisfy $(D^3_\epsilon)$.
We collect several results about such degree sequences, which we will use in the proof of Proposition~\ref{prop:very_rob}.


The first lemma shows that $G^\cD[S_3]$ is typically a clique.
\begin{lemma}\label{lem:clique}
Suppose $n\in \N$ and $1/n\ll 1/\avd \leq 1$. 
Let $V$ be a set of size $n$ and let
$\cD$ be a degree sequence on $V$ with $\Td\leq \avd n$.
Then the probability that $G^\cD[S_3]$ is a clique is at least $1-n^{-1/11}$.
\end{lemma}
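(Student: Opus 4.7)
The plan is to apply the switching method to each pair of vertices in $S_3$ and then take a union bound. The key quantitative input is that every vertex in $S_3$ has degree at least $n^{4/5}$, while $|S_3|$ is small because the total degree is linear.

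First I would observe that $|S_3|\leq \Sigma^\cD/n^{4/5}\leq \avd n^{1/5}$. Now fix $u,v\in S_3$ with $u\neq v$ and let
$$\cF^-:=\{G\in\cG^\cD:uv\notin E(G)\},\qquad \cF^+:=\{G\in\cG^\cD:uv\in E(G)\}.$$
I want to bound $d(\cF^-\to\cF^+)$ from below and $d(\cF^+\to\cF^-)$ from above, and then use \eqref{eq:fund}.

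For the lower bound: given $G\in\cF^-$, I count ordered pairs $(ub,vd)$ of edges of $G$ such that the $\{ub,vd\}$-switch (producing edges $uv$ and $bd$) is valid and lands in $\cF^+$. The number of ordered pairs of edges of the form $(ub,vd)$ is exactly $d(u)d(v)\geq n^{8/5}$. Since $uv\notin E(G)$, automatically $b\neq v$ and $d\neq u$. The switch fails to be valid only if $b=d$ or $bd\in E(G)$. The number of pairs with $b=d$ is at most $|N(u)\cap N(v)|\leq \min(d(u),d(v))\leq n$. The number of pairs with $bd\in E(G)$ counts paths $u$--$b$--$d$--$v$ in $G$, which is at most $\sum_{b\in N(u)}d(b)\leq \Sigma^\cD\leq \avd n$. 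Since $n^{8/5}$ dominates both, for $n$ large,
$$d(\cF^-\to\cF^+)\geq \tfrac12 d(u)d(v)\geq \tfrac12 n^{8/5}.$$

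For the upper bound: given $G\in\cF^+$, every switch mapping $G$ into $\cF^-$ must delete the edge $uv$, so it has the form $\{uv,xy\}$ for some ordered edge $xy\in E(G)$. Thus
$$d(\cF^+\to\cF^-)\leq 2|E(G)|\leq \avd n.$$

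Combining via \eqref{eq:fund},
$$\Pro[uv\notin E(G^\cD)]=\frac{d(\cF^+\to\cF^-)}{d(\cF^-\to\cF^+)}\Pro[uv\in E(G^\cD)]\leq \frac{\avd n}{\tfrac12 n^{8/5}}=2\avd n^{-3/5}.$$

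Finally, a union bound over the at most $\binom{|S_3|}{2}\leq \tfrac12 \avd^2 n^{2/5}$ unordered pairs in $S_3$ gives
$$\Pro[G^\cD[S_3]\text{ is not a clique}]\leq \tfrac12 \avd^2 n^{2/5}\cdot 2\avd n^{-3/5}=\avd^3 n^{-1/5}\leq n^{-1/11},$$
for $n$ sufficiently large (using $1/n\ll 1/\avd$). The only mildly delicate step is verifying the switching lower bound, where I need to confirm that the pairs producing loops or multi-edges are negligible compared to $d(u)d(v)\geq n^{8/5}$; this is easy because each correction term is $O(n)$ while the main term is at least $n^{8/5}$.
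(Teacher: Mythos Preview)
Your proof is correct and follows essentially the same approach as the paper's: bound $|S_3|\leq \avd n^{1/5}$, use a switching argument to show $\Pro[uv\notin E(G^\cD)]=O(n^{-3/5})$ for each pair $u,v\in S_3$, and conclude by a union bound. Your treatment of the invalid switches (loops from $b=d$ and multi-edges from $bd\in E(G)$) is in fact slightly more explicit than the paper's, which simply asserts the $n^{8/5}/2$ lower bound.
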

\begin{proof}
Since $\Td \leq \avd n$, it follows that $|S_3|\leq \avd n^{1/5}$.
If $\Pro[uv\notin E(G^\cD)]\leq n^{-1/2}$ for every $u,v\in S_3$, 
a union bound over all pairs $u,v\in S_3$ proves the lemma.

It remains to prove that for each pair $u,v\in S_3$, 
we have $\Pro[uv\notin E(G^\cD)]\leq n^{-1/2}$. 
Let $\cF^-$ be the set of graphs $G$ on $V$ with degree sequence $\cD$ and $uv\notin E(G)$ and 
let $\cF^+$ be the set of graphs $G$ on $V$  with degree sequence $\cD$ and $uv\in E(G)$. 

Suppose $G\in \cF^-$. Since $d(u),d(v)\geq n^{4/5}$ and $\Td\leq \avd n$, there exist at least $n^{8/5}/2$ ordered pairs $(x,y)$ with $x\in N(u)$, $y\in N(v)$ and $xy\notin E(G)$. 
Switching $ux$ and $vy$ transforms $G$ into a graph in $\cF^+$. 

Suppose $G\in \cF^+$, then there are at most $\avd n$ switches that transform $G$ into a graph in $\cF^-$. 
Therefore, by (\ref{eq:fund}), we obtain
\begin{align*}
	\Pro[\cF^-]\leq \frac{2\avd n}{n^{8/5}}\cdot\Pro[\cF^+]\leq n^{-1/2}.
\end{align*}
\end{proof}

Conditional on $G^\cD[S_3]$ being a clique, $G^\cD_p[S_3]$ is a binomial random graph on $|S_3|$ vertices and edge probability $p$. 
Since $p\in (0,1)$, 
it is an exercise to check that $S_3$ induces a connected graph in $G^\cD_p$ with probability at least $1-c_1^{|S_3|}$, for some $c_1=c_1(p)<1$.
Together with Lemma~\ref{lem:clique} we obtain the following corollary.
\begin{corollary}\label{cor:A}
Suppose $n\in \N$ and $1/n\ll 1-c\ll p, 1/\avd\leq 1$. 
Let $V$ be a set of size $n$ and let
$\cD$ be a degree sequence on $V$ with $\Td\leq \avd n$.
Then
\begin{align*}
	\Pro[G^\cD_p[S_3]\ \text{\rm is disconnected}]\leq c^{|S_3|}.
\end{align*}
\end{corollary}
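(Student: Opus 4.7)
The proof plan is to combine Lemma~\ref{lem:clique} with the classical connectivity estimate for the binomial random graph. First I would let $\cA$ denote the event that $G^\cD[S_3]$ is the complete graph on $S_3$, so that Lemma~\ref{lem:clique} gives $\Pro[\overline{\cA}] \leq n^{-1/11}$. Conditional on $\cA$, each of the $\binom{|S_3|}{2}$ edges of $G^\cD[S_3]$ is retained independently with probability $p$ by the percolation process, so the conditional law of $G^\cD_p[S_3]$ given $\cA$ is precisely that of the binomial random graph $G(|S_3|, p)$.

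Next I would bound $\Pro[G(m,p) \text{ is disconnected}]$ by a standard union bound over vertex bipartitions: if $S_3 = A \cup B$ with $|A| = k$ and $|B| = m - k$, then all $k(m-k)$ potential edges between $A$ and $B$ must be absent, an event of probability $(1-p)^{k(m-k)}$. Summing over $1 \leq k \leq \lfloor m/2 \rfloor$ yields
\[
\Pro[G(m,p) \text{ is disconnected}] \leq \sum_{k=1}^{\lfloor m/2 \rfloor} \binom{m}{k}(1-p)^{k(m-k)}.
\]
Since $p \in (0,1)$ is a positive constant and $k(m-k) \geq km/2$ for $k \leq m/2$, the summands form (essentially) a geometric series with ratio tending to zero, whose $k=1$ term $m(1-p)^{m-1}$ dominates. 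A short elementary calculation then shows that the entire sum is at most $c_1^{m}$ for some constant $c_1 = c_1(p) \in (0,1)$ depending only on $p$ (in fact, any $c_1 > \sqrt{1-p}$ works for $m$ large, and adjusting $c_1$ upwards handles the finitely many small $m$).

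Combining the two estimates gives $\Pro[G^\cD_p[S_3] \text{ is disconnected}] \leq n^{-1/11} + c_1^{|S_3|}$. The hierarchy $1-c \ll p$ lets me pick $c$ close enough to $1$ so that $c > c_1$, ensuring $c_1^{|S_3|} \leq \tfrac{1}{2} c^{|S_3|}$; the polynomial error $n^{-1/11}$ is then absorbed into the remaining $\tfrac{1}{2} c^{|S_3|}$ using the assumption $1/n \ll 1-c$, which keeps $c$ bounded away from $1$ by far more than $1/n$. There is no substantial obstacle here: the entire argument is a direct consequence of Lemma~\ref{lem:clique} plus a routine Erd\H{o}s--R\'enyi tail bound, with the hierarchy chosen precisely to make the combination go through.
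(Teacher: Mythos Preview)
Your approach---Lemma~\ref{lem:clique} plus the standard $G(m,p)$ connectivity bound---is exactly the paper's; its entire proof is the one-paragraph remark immediately preceding the corollary, and you have simply filled in the exercise it leaves to the reader.

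One caution on your final step: the absorption of $n^{-1/11}$ into $\tfrac{1}{2}c^{|S_3|}$ is only valid when $|S_3|$ stays bounded (or grows at most logarithmically in $n$). Since $|S_3|$ can be as large as $\avd n^{1/5}$, the quantity $c^{|S_3|}$ may be exponentially small in $n^{1/5}$, hence far smaller than $n^{-1/11}$, and the hierarchy $1/n\ll 1-c$ does not rescue this---it controls how small $1/n$ is relative to the fixed constant $1-c$, not relative to $c^{|S_3|}$. The paper's terse ``together with Lemma~\ref{lem:clique}'' elides the same point. In the sole application (the proof of Proposition~\ref{prop:very_rob}) one only uses the corollary to conclude $\Pro[\overline{\cA_1}]\le\delta/2$ whenever $|S_3|\ge s$ for a fixed constant $s$, and for that the raw bound $n^{-1/11}+c_1^{|S_3|}$ already suffices; so the imprecision is harmless for the paper's purposes, but your sentence about absorption overstates what the hierarchy actually delivers.
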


The next lemma shows that, typically, the vertices in $S_2\sm S_3$ are connected to a vertex in $S_3$ in $G^\cD$ if $|S_3|\geq 100$.
\begin{lemma}\label{lem:connectedtoS1}
Suppose $n\in \N$ and $1/n\ll 1/\avd \leq 1$.  
Let $V$ be a set of size $n$ and let 
$\cD$ be a degree sequence on $V$ with $\Td\leq \avd n$.
Assume that $|S_3|\geq 100$.
Then, with probability at most $1/n$,
there is a vertex $u\in S_2\sm S_3$ which is not adjacent to a vertex in $S_3$.
\end{lemma}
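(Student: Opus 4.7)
My plan is to establish a per-vertex bound $\Pro[N_{G^\cD}(u)\cap S_3=\emptyset]\leq n^{-2}$ for each fixed $u\in S_2\sm S_3$, and then apply a union bound over the at most $\avd n^{2/3}$ such vertices (using $d(u)\geq n^{1/3}$ together with $\Td\leq \avd n$) to deduce the lemma; indeed, $\avd n^{2/3}\cdot n^{-2}=\avd n^{-4/3}\leq 1/n$ for $n$ large enough.

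For the per-vertex bound I would use the switching method in the spirit of Lemma~\ref{lem:clique}. Let $\cA_k$ denote the family of graphs with degree sequence $\cD$ in which $u$ has exactly $k$ neighbours in $S_3$. A valid $\{ux,zy\}$-switch with $x\in N(u)\sm S_3$, $z\in S_3\sm N(u)$, $y\in N(z)\sm\{u,x\}$, and $xy\notin E(G)$ replaces $ux,zy$ by $uz,xy$ and maps $\cA_k$ to $\cA_{k+1}$. For $G\in \cA_0$, using $d(u)\geq n^{1/3}$ and $\sum_{z\in S_3}d(z)\geq 100\,n^{4/5}$, the main term of the forward count is $d(u)\sum_{z\in S_3}d(z)\geq 100\,n^{17/15}$. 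The reverse switches from any $G'\in \cA_1$ to $\cA_0$ are bounded by $|E(G')|\leq \avd n/2$, and so via \eqref{eq:fund} we obtain $|\cA_0|/|\cA_1|=O(\avd n^{-2/15})$.

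Since a single switch is not strong enough for the union bound, I would iterate: if the bound $|\cA_k|/|\cA_{k+1}|=O(\avd n^{-2/15})$ can be pushed through for all $k=0,1,\dots,K-1$ with a constant $K=20$, then chaining gives
\[|\cA_0|\leq (O(\avd n^{-2/15}))^{K}|\cA_K|\leq n^{-2}|\cG^{\cD}|,\]
as required. The assumption $|S_3|\geq 100$ is used here to guarantee that $|S_3\sm N(u)|\geq |S_3|-K\geq 80$ throughout the iteration, and $d(u)\geq n^{1/3}$ ensures $d(u)-K\geq n^{1/3}/2$, so the main term of the forward count survives at each stage.

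The main obstacle I anticipate is controlling the error terms in the forward count uniformly in $0\leq k\leq K$. The exclusion $xy\in E(G)$ contributes at most $\sum_{y\in V} d_{N(u)}(y)\,d_{S_3}(y)$, which naively is bounded by $\avd^2 n^{6/5}$ (e.g.\ by Cauchy--Schwarz) and therefore exceeds the main term $100\,n^{17/15}$; moreover, once $k\geq 1$ the sum $\sum_{z\in S_3\sm N(u)}d(z)$ can drop far below $100\,n^{4/5}$ if $S_3\cap N(u)$ contains a vertex of very high degree. To cope with both difficulties I would restrict the switch to a light subfamily $S_3^{\ell}:=\{z\in S_3:d(z)\leq 2\Td/|S_3|\}$, which by a Markov-type bound satisfies $|S_3^{\ell}|\geq |S_3|/2\geq 50$, and to neighbours $x\in N(u)$ of moderate degree, of which the majority exist because $\sum_{x\in N(u)}d(x)\leq \avd n$. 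A more refined accounting along these lines should render the error strictly subdominant at every step of the iteration, closing the argument.
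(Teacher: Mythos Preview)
Your high-level plan is exactly the paper's: prove $\Pro[N_{G^\cD}(u)\cap S_3=\emptyset]\le n^{-2}$ for each fixed $u\in S_2$ by iterating a switching inequality between the classes $\cF_k=\{G:|N_G(u)\cap S_3|=k\}$, and then take a union bound. The gap is in your forward count.

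You sum over \emph{all} $z\in S_3\sm N(u)$, obtaining a main term of order $d(u)\sum_{z\in S_3}d(z)$; as you correctly diagnose, the exclusion $xy\in E(G)$ then costs $\sum_{x\in N(u)}\sum_{y\in N(x)} d_{S_3}(y)$, which can reach $|S_3|\sum_{x\in N(u)}d(x)\le \avd n^{1/5}\cdot\avd n=\avd^2 n^{6/5}$ and swamps $n^{17/15}$. Your proposed repair does not close this. Restricting to $S_3^\ell$ does not lower $|S_3|$ in the error bound, and restricting to neighbours $x$ of ``moderate'' degree via $\sum_{x\in N(u)}d(x)\le \avd n$ only gives (by Markov) $|N'(u)|\ge d(u)/2$ with $d(x)\le 2\avd n/d(u)$ for $x\in N'(u)$; the error is then at most $|S_3|\sum_{x\in N'(u)}d(x)\le |S_3|\cdot d(u)\cdot 2\avd n/d(u)=2\avd|S_3|\,n$, which for $|S_3|$ as large as $\avd n^{1/5}$ is still $2\avd^2 n^{6/5}$, against a main term of $15\,d(u)\,n^{4/5}\le 15\,n^{8/5}$ but possibly as small as $15\,n^{17/15}$. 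So the ratio of error to main term is of order $n^{1/15}$ and diverges.

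The paper avoids the issue by choosing a \emph{single} vertex $x\in S_3\sm N_G(u)$ (this exists for all $k\le 50$ since $|S_3|\ge 100$) and counting pairs $(v,y)$ with $v\in N(u)$, $y\in N(x)$. The main term is $d(u)d(x)\ge n^{1/3}\cdot n^{4/5}=n^{17/15}$, while the exclusion $vy\in E(G)$ now costs at most $\Td\le \avd n$ and is trivially subdominant. The backward count from $\cF_{k+1}$ to $\cF_k$ is at most $(k+1)\avd n$, so each step yields $\Pro[\cF_k]\le n^{-1/15}\Pro[\cF_{k+1}]$, and iterating to $k=50$ gives $\Pro[\cF_0]\le n^{-50/15}\le n^{-2}$. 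Summing over $S_3$ in the forward count buys you nothing (the per-step ratio is unchanged) and creates an error you cannot control; the fix is simply to use one target vertex in $S_3$.
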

\begin{proof}
It suffices to show that every vertex $u\in S_2$
is adjacent to a vertex in $S_3$ with probability at least $1-n^{-2}$.
Let $u\in S_2$ and let $0\leq k\leq 50$.
Let $\cF_k$ be the event that $u$ is adjacent to exactly $k$ vertices in $S_3$.

Consider a graph $G\in\cF_{k+1}$.
Clearly, there are at most $(k+1)\avd n$ switches transforming $G$ into a graph in $\cF_k$.

Consider a graph $G\in\cF_k$.
Let $x$ be any vertex in $S_3$ which is not adjacent to $u$ (since $|S_3| \geq 100$ but $k\leq 50$ there is such a vertex).
Thus there are at least $n^{1/3+4/5}=n^{17/15}$ pairs $(v,y)$ such that $v\in N(u)$ and $y\in N(x)$.
For at most $n$ pairs $v=y$ and for at most $2\avd n$ pairs, we have $vy\in E(G)$.
Thus at least $n^{17/15}/2$ pairs lead to a $\{ uv,xy\}$-switch transforming $G$ into a graph in $\cF_{k+1}$.
Hence
\begin{align*}
	\Pro[\cF_k]\leq n^{-1/15}\cdot \Pro[\cF_{k+1}].
\end{align*}
Moreover, this implies
\begin{align*}
	\Pro[\cF_0]\leq n^{-50/15} \cdot\Pro[\cF_{50}]\leq n^{-2},
\end{align*}
which completes the proof.
\end{proof}
Recall that $T$ is the set of vertices of degree at most $3\avd$.
As $\cD$ has average degree at most $\avd$,
many vertices belong to $T$.
More precisely,
as every vertex in $V \sm T$ has degree at least $3\avd$ and the average degree at most $\avd$,
we conclude $|V \sm T|\leq n/3$.
Thus
\begin{align}\label{eq:sizeT}
	|T|\geq \frac{2n}{3}\,.
\end{align}

The next lemma shows that many vertices in $T$ are adjacent to a vertex in $S_2$ if \eqref{eq:De} holds.

\begin{lemma}\label{lem:N(S_2)T}
Suppose $n\in \N$ and $1/n\ll 1000\epsilon\leq 1/\avd \leq  1$.  
Suppose $V$ is a set of size $n$ and 
$\cD$ is a degree sequence on $V$ with $\Td\leq \avd n$ that satisfies \eqref{eq:De}.
Then, 
\begin{align*}
	\Pro[|N(S_2)\cap T|\leq \epsilon^2n]\leq n^{-1}\;.
\end{align*}
\end{lemma}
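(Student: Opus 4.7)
Since $S_3 \subseteq S_2$, we have $|N(S_2) \cap T| \geq |N(S_3) \cap T|$, so it suffices to prove $\Pro[|N(S_3) \cap T| \leq \epsilon^2 n] \leq n^{-1}$. The hypothesis \eqref{eq:De} gives $\sigma_3 := \sum_{u \in S_3} d(u) \geq \epsilon n/10$ and \eqref{eq:sizeT} gives $|T| \geq 2n/3$; combined with $|S_3| \leq \avd n^{1/5}$ (so $|E(G[S_3])| \leq \binom{|S_3|}{2} = o(\sigma_3)$), this yields $E_{S_3, V\setminus S_3}(G) \geq \sigma_3/2 \geq \epsilon n/20$ deterministically for every $G \in \cG^\cD$. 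The heuristic is that the $\Omega(\epsilon n)$ edges leaving $S_3$ land in $T$ with probability bounded away from $0$ (since $\sum_{v \in T} d(v) \geq 2n/3$ is an $\Omega(1/\avd)$ fraction of $\Sigma^\cD$), producing $\Theta(\epsilon n/\avd)$ distinct $T$-endpoints, which beats the target $\epsilon^2 n$ by a factor $\gtrsim 1000$ thanks to $\epsilon\avd \leq 1/1000$.

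The plan is a per-vertex switching argument analogous to Lemma~\ref{lem:connectedtoS1}, followed by concentration. For fixed $v \in T$, I would compare $\cF^-(v) := \{G : d_G(v, S_3) = 0\}$ with $\cF^+(v) := \{G : d_G(v, S_3) \geq 1\}$ via switches $\{vz, uy\} \mapsto \{vu, zy\}$, where $u \in S_3$, $z \in N_G(v)$ and $y \in N_G(u) \setminus (S_3 \cup \{v, z\})$; a forward count of $(1-o(1))\,d(v)\sigma_3/2$ valid tuples (forbidden configurations where $zy \in E$ or where vertices repeat contribute only lower-order terms, bounded using $|S_3|\leq \avd n^{1/5}$ and the degree cap $n^{4/5}$ on $V\setminus S_3$) together with a backward count of at most $\avd n$ yields $\Pro[v \in N(S_3)] \geq c\,\epsilon d(v)/\avd$ for a universal constant $c > 0$. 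Summing over $v \in T$ and using $\sum_{v \in T} d(v) \geq 2n/3$ gives
\[
	\bE[|N(S_3) \cap T|] \;\geq\; \frac{2c\,\epsilon n}{3\avd} \;\geq\; C \epsilon^2 n
\]
for a large absolute constant $C$, thanks to $\epsilon\avd \leq 1/1000$. For the tail bound, a two-vertex switching in the style of Lemmas~\ref{lem: vertices adjacent} and~\ref{lem: prob next} should yield $\Pro[v_1, v_2 \in N(S_3)] \leq (1+o(1))\,\Pro[v_1 \in N(S_3)]\,\Pro[v_2 \in N(S_3)]$ for distinct $v_1, v_2 \in T$, so that $\mathrm{Var}(|N(S_3) \cap T|) = o(\bE^2)$ and Chebyshev's inequality yields the required $n^{-1}$ tail.

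The main obstacle is the two-vertex switching estimate: the indicators $\ind{v \in N(S_3)}$ are positively correlated through shared $S_3$-neighbours, and the switch must carefully handle several cases (whether $v_1, v_2$ share an $S_3$-neighbour, whether a single switch simultaneously affects both adjacencies, etc.)\ while keeping all forbidden configurations under control. The slack provided by $1000\epsilon\avd \leq 1$ in the hypothesis ensures that the error terms arising from common neighbours, multi-edges, and vertex repetitions remain of lower order than the main terms, so the pairwise correlations are asymptotically negligible. Alternatively, one can argue directly by switching on the classes $\cF_k := \{G : |N_G(S_3) \cap T| = k\}$ for $k \leq \epsilon^2 n$, which avoids computing the variance at the price of more intricate bookkeeping of the net change in $|N(S_3) \cap T|$ under each switch.
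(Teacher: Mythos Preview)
Your second-moment route has a quantitative gap at the final step. Chebyshev with $\mathrm{Var}=o(\bE^2)$ yields only $\Pro[X<\tfrac12\bE X]=o(1)$, not $\le n^{-1}$. To extract a $1/n$ tail you would need $\mathrm{Var}=O(\bE^2/n)$, i.e.\ a pairwise estimate $\Pro[v_1,v_2\in N(S_3)]=\Pro[v_1\in N(S_3)]\Pro[v_2\in N(S_3)]\bigl(1+O(1/n)\bigr)$ rather than the $(1+o(1))$ you sketch; that is a considerably more delicate two-vertex switching than what you describe, and even then the implicit constant must be driven below~$1$.

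The alternative you mention in your last sentence --- switching directly on the level sets $\cF_k=\{G:|N_G(S_2)\cap T|=k\}$ for $k\le 2\epsilon^2 n$ --- is exactly the paper's proof, and it avoids any variance computation. The paper upper-bounds the switches $\cF_{k+1}\to\cF_k$ by $2\epsilon^2\avd n^2$ (pick $u\in T\cap N(S_2)$ with a unique $S_2$-neighbour, then any second edge) and lower-bounds the switches $\cF_k\to\cF_{k+1}$ by $\epsilon n^2/60$ (pick an edge $xy$ with $x\in S_3$ and $y$ either outside $T$ or having at least two $S_2$-neighbours, then an edge $uv$ with $u\in T\setminus N(S_2)$; the switch adds $u$ to $T\cap N(S_2)$ without removing $y$). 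The resulting ratio $\Pro[\cF_k]/\Pro[\cF_{k+1}]\le 120\epsilon\avd\le 1/2$ --- this is precisely where $1000\epsilon\le 1/\avd$ is used --- gives $\sum_{k\le\epsilon^2 n}\Pro[\cF_k]\le 2^{-\epsilon^2 n+1}$, an exponentially small tail far stronger than $n^{-1}$. So the ``intricate bookkeeping'' you anticipate turns out to be lighter than the two-vertex correlation estimate your primary approach would require.
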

\begin{proof}
For every $0\leq k\leq 2\epsilon^2n$,
let $\cF_k$ be the set of graphs with degree sequence $\cD$ such that $|T\cap N(S_2)|=k$.

Suppose $0\leq k \leq 2\epsilon^2n$.
Consider at graph $G\in \cF_{k+1}$.
In order to transform $G$ into a graph in $\cFk$,
we need to select a vertex $u\in T\cap N(S_2)$ which has at exactly one neighbour $v$ in $S_2$.
Then there are at most $\avd n$ switches involving $uv$.
Thus in total, there are at most $2\epsilon^2\avd n^2$ switches from $\cFkk$ to $\cFk$.

Suppose $G\in\cFk$. 
Recall that $k\leq 2\epsilon^2 n$.
Since $\Td\leq\avd n$, we have $|S_2|\leq \avd n^{2/3}$ and $|S_3|\leq \avd n^{1/5}$. As \eqref{eq:De} holds and as there at most $(\avd)^2 n^{2/3+1/5}\leq \epsilon n/40$ edges between the vertices of $S_3$ and $S_2$, it turns out that there are at least $\epsilon n/15$ edges $xy$ such that $x\in S_3\subseteq S_2$ and $y\in N(S_2)$. 
More specifically, since 
$k\leq 2 \epsilon^2 n$, there are least $\epsilon n/20$ edges $xy$ with $x \in S_3$ and such that $y$ satisfies one of the following: either
$y\in N(S_2)\setminus T$ or if $y \in N(S_2)\cap T$, then it has at least two neighbours in $S_2$. 
Fix such a choice of an edge $xy$. 
Note that if we switch $xy$ with another edge $uv$ such that $u\in T\sm N(S_2)$, we can only increase the neighbourhood of $S_2$.
Observe that there are at most $n^{2/3}$ edges $uv$ such that $u\in T\sm N(S_2)$ and $v\in N(y)$.
Furthermore, $|T\sm N(S_2)|\geq n/2$.
Let $u\in T\sm N(S_2)$ and $v\in N(u)$ such that $v\notin N(y)$.
Then there are at least $n/2-n^{2/3}\geq n/3$ choices for the edge $uv$.
Observe that the $\{uv,xy\}$-switch  yields a graph in $\cFkk$ and there are at least $\epsilon n^2/60$ switches from $\cFk$ to $\cFkk$.
Hence
\begin{align*}
	\Pro[\cF_k]\leq \frac{120\epsilon^2 \avd n^2}{\epsilon n^2}\cdot \Pro[\cF_{k+1}]\leq \frac{1}{2}\Pro[\cF_{k+1}].
\end{align*}
In particular,
\begin{align*}
	\sum_{k=0}^{\epsilon^2 n}\Pro[\cF_k]\leq 2 \Pro[\cF_{\epsilon^2 n}] \leq 2^{-\epsilon^2 n+1}\,,
\end{align*}
which completes the proof.
\end{proof}

\subsection{Lighter degree sequences}

In this subsection we consider degree sequences that satisfy $\eqref{eq:De1}$  but not \eqref{eq:De}.
The first lemma shows that in this case, typically, the minimum degree of $G^\cD[S_1]$ is large.
\begin{lemma}\label{lem:large min deg}
Suppose $n\in \N$ and $1/n\ll\epsilon \ll 1/\avd \leq 1$.  
Let $V$ be a set of size $n$ and let $\cD$ be a degree sequence on $V$ with $\Td\leq \avd n$.
Assume also $\cD$ that satisfies~\eqref{eq:De1}, but not \eqref{eq:De}. 
Then, with probability $o(1)$, there exists a vertex $u\in S_1$ such that $d_{G^\cD[S_1]}(u)\leq \min\{d(u),n^{1/6}\}\cdot {\epsilon}/{(16\avd)}$.
\end{lemma}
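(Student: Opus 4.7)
The plan is to apply the switching method separately to each fixed $u\in S_1$ to bound $\Pro[d_{G^\cD[S_1]}(u)\leq k(u)]$, where $k(u):=\lfloor\min\{d(u),n^{1/6}\}\cdot\epsilon/(16\avd)\rfloor$, and then take a union bound over the at most $|S_1|\leq \avd n/\log^2 n$ vertices in $S_1$. It therefore suffices to obtain an $n^{-\omega(1)}$ bound for each $u$.

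Fix $u\in S_1$, set $Y(u):=N_G(u)\sm S_1$, and for each $k\geq 0$ let $\cF_k:=\{G\in\cG^\cD:d_{G[S_1]}(u)=k\}$. The switch I propose is $\{uy,xz\}\mapsto\{ux,yz\}$ with $y\in Y(u)$ and $x\in S_1\sm N_G[u]$; when valid, this transforms a graph in $\cF_k$ into a graph in $\cF_{k+1}$. The forward count is at least $(d(u)-k)\cdot\sum_{x\in S_1\sm N[u]}d(x)$, minus the number of bad templates (those producing a loop or a multi-edge). Using \eqref{eq:De1} together with the failure of \eqref{eq:De}, one has $\sum_{x\in S_1\sm S_3}d(x)\geq 9\epsilon n/10$; since vertices outside $S_3$ have degree below $n^{4/5}$ and, on $\cF_k$ with $k\leq k(u)$, $|N[u]\cap S_1|\leq k(u)+1=O(n^{1/6})$, the degree sum $\sum_{x\in S_1\cap N[u]}d(x)$ is at most $\epsilon n/10+O(n^{29/30})$. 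Hence the main term is at least $(d(u)-k)\cdot 4\epsilon n/5$.

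For the bad templates, those with $z=y$ contribute at most $(d(u)-k)\log^2 n$ since $d(y)<\log^2 n$ for $y\notin S_1$. Those with $yz\in E(G)$ and $z\neq y$ equal $\sum_z d_{Y(u)}(z)\,d_{S_1\sm N[u]}(z)$: for $z\in S_3$ I bound $d_{Y(u)}(z)\leq d(u)$ and $d_{S_1}(z)\leq d(z)$, yielding a contribution of at most $d(u)\sum_{z\in S_3}d(z)\leq d(u)\epsilon n/10$ by the failure of \eqref{eq:De}; for $z\notin S_3$ I use $d_{S_1}(z)\leq d(z)<n^{4/5}$ together with $\sum_z d_{Y(u)}(z)\leq |Y(u)|\log^2 n$, giving $d(u)\cdot n^{4/5}\log^2 n=o(d(u)\epsilon n)$. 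Altogether the bad templates contribute at most $d(u)\epsilon n/8$; since $k\ll d(u)$, the forward count is at least $d(u)\epsilon n/2$. The backward count from $\cF_{k+1}$ to $\cF_k$ is trivially at most $(k+1)\avd n$, so \eqref{eq:fund} yields
\[
\frac{\Pro[\cF_k]}{\Pro[\cF_{k+1}]}\leq \frac{2(k+1)\avd}{d(u)\epsilon}.
\]

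This ratio is at most $1/8$ for $k+1\leq k(u)$ and at most $1/2$ for $k+1\leq k^*:=\lfloor d(u)\epsilon/(4\avd)\rfloor$, with $k^*\geq 4k(u)$. Iterating the second bound from $k^*-1$ down to $k(u)$ gives $\Pro[\cF_{k(u)}]\leq 2^{-3k(u)}$, after which iterating the first bound from $k(u)-1$ down to $0$ yields $\sum_{k=0}^{k(u)}\Pro[\cF_k]=O(2^{-3k(u)})$. Since $u\in S_1$ forces $d(u)\geq\log^2 n$, we have $k(u)=\omega(\log n)$, so this bound is $n^{-\omega(1)}$, and the union bound over $S_1$ concludes. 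The main obstacle is controlling the bad templates with $z\in S_3$, which could in principle dominate the main term; the failure of \eqref{eq:De} is exactly what caps their total contribution at $d(u)\epsilon n/10$ and thus keeps the switching ratio small.
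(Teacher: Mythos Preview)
Your argument is essentially the paper's own switching proof: fix $u\in S_1$, compare $\cF_k$ and $\cF_{k+1}$ via the switch $\{uy,xz\}\to\{ux,yz\}$ with $y\in N(u)\sm S_1$ and $x\in S_1\sm N[u]$, bound the bad templates using the failure of \eqref{eq:De}, and union-bound over $S_1$. The forward and backward counts and the final tail bound match the paper's.

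There is, however, one small slip. Your lower bound on the forward count relies on $|N[u]\cap S_1|\leq k+1=O(n^{1/6})$, which you explicitly invoke only ``on $\cF_k$ with $k\leq k(u)$''. You then apply the resulting ratio estimate for all $k+1\leq k^*:=\lfloor d(u)\epsilon/(4\avd)\rfloor$. When $d(u)>n^{1/6}$ this $k^*$ can be much larger than $n^{1/6}$, and for such $k$ the term $\sum_{x\in N[u]\cap S_1\sm S_3}d(x)\leq (k+1)n^{4/5}$ is no longer $o(\epsilon n)$, so the forward count is not established in that range. The fix is immediate: iterate only up to $2k(u)$ (as the paper does) or $4k(u)$; since $k(u)\leq n^{1/6}\epsilon/(16\avd)$ always, these are $O(n^{1/6})$ and your forward-count analysis applies verbatim, while $\Pro[\cF_{k(u)}]\leq 2^{-k(u)}$ (respectively $2^{-3k(u)}$) is already $n^{-\omega(1)}$.
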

\begin{proof}
Let $u\in S_1$ and let $K :=\lfloor \min\{d(u),n^{1/6}\}\cdot {\epsilon}/{(16\avd)}\rfloor$.  
For every $0\leq k\leq 2K$, 
let $\cF_k$ be the set of graphs $G$ with degree sequence $\cD$ such that $d_{G[S_1]}(u)=k$. 

Suppose $G\in \cF_k$. 
There are at least $d(u)-k\geq d(u)/2$ choices for an edge $uv$ with $v\in N(u)\sm S_1$. 
The degree of $v$ is less than $\log^2 n$ and each one of its neighbours has either degree less than $n^{4/5}$ (outside $S_3$) 
or it belongs to $S_3$. The former have total degree less than $n^{4/5} \log^2 n$, whereas the latter have total degree at most 
$\epsilon n/10$ (since \eqref{eq:De} does not hold).
Hence, there are at most $n^{4/5}\log^2 n + \epsilon n/10\leq \epsilon n/5$ edges at distance $2$ from $v$. 
Similarly, there are at most $n^{4/5} k + \epsilon n/10 \leq \epsilon n/5$ edges with one endpoint in $N(u)\cap S_1$. 
Since $(D_\epsilon^1)$ holds, there are at least $\epsilon n- 2\epsilon n/5 \geq \epsilon n/2$ edges $xy$ with $x\in S_1\sm N(u)$ and $y\notin N(v)$.
Performing a $\{ xy, uv\}$-switch,  we obtain a graph in $\cF_{k+1}$.
We conclude that there are at least $\epsilon d(u) n/4$ switches that transform $G$ into a graph 
in $\cF_{k+1}$. 

If $G$ is in $\cF_{k+1}$, there are at most $(k+1)\cdot \avd n$ switches that transform it into a graph in $\cF_{k}$. 
Therefore, for every $0\leq k< 2K$, we obtain
$$
\Pro[\cF_{k}]\leq 
\frac{4(k+1)\avd  n}{\epsilon d(u) n}\cdot\Pro[\cF_{k+1}]
\leq \frac{1}{2}\cdot \Pro[\cF_{k+1}]\;.
$$
Since $u\in S_1$, 
we have $d(u)\geq \log^2{n}$ and we obtain
$$
\sum_{k=0}^{K-1}\Pro[ \cFk] \leq 2^{-K} \Pro[\cF_{2K}]\leq n^{-2}\;.
$$
A union bound over all vertices $u\in S_1$ completes the proof.
\end{proof}

\begin{lemma}\label{lem:connec2}
Suppose $n\in \N$ and $1/n\ll p, 1/\avd\leq 1$. 
Let $V$ be a set of size $n$ and let $\cD$ be a degree sequence on $V$ with $\Td\leq \avd n$. 
For $R\subseteq V$ the following holds.
Conditional on $\delta(G^\cD [R])\geq 200\log{n}/p$, 
the probability that $G^\cD_p [R]$ is connected is $1-o(1)$. 
\end{lemma}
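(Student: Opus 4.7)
The plan is a union bound, over partitions $(A, R\setminus A)$ of $R$ with $1 \leq |A| \leq |R|/2$, of the probability (restricted to the conditioning event $\delta(G^\cD[R])\geq 200\log n/p$) that no edge crossing the cut survives percolation. Conditional on $G^\cD$, the chance that percolation leaves no cross-edge is $(1-p)^{e_{G^\cD}(A, R\setminus A)}$, and on the conditioning event we have the deterministic bound
\[
 e_{G^\cD}(A, R\setminus A) \;\geq\; \delta(G^\cD[R])\cdot |A| - 2|E(G^\cD[A])| \;\geq\; 200|A|\log n/p - 2|E(G^\cD[A])|,
\]
so the task reduces to an upper bound on $|E(G^\cD[A])|$.

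For small sets $|A|\leq 100\log n/p$, the trivial estimate $|E(G^\cD[A])|\leq \binom{|A|}{2}\leq 50|A|\log n/p$ already gives $e_{G^\cD}(A, R\setminus A)\geq 100|A|\log n/p$; hence the probability that no cross-edge survives percolation is at most $(1-p)^{100|A|\log n/p}\leq n^{-100|A|}$, and summing over the $\binom{|R|}{|A|}\leq n^{|A|}$ choices of $A$ and over $|A|$ in this regime contributes $o(1)$.

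For larger sets $|A|> 100\log n/p$ the trivial bound is too weak, so I would invoke the switching method to show that, for each fixed $A\subseteq R$ of size $k$, the bound $|E(G^\cD[A])|\leq 50k\log n/p$ holds with probability at least $1-n^{-Ck}$ for a sufficiently large absolute constant $C$. Concretely, partition the graphs with degree sequence $\cD$ into classes $\cF_t$ according to $|E(G[A])|=t$, and use switches of the form $\{xy,uv\}\to\{xu,yv\}$ with $xy\in E(G[A])$, $uv\in E(G[V\setminus A])$, and $xu,yv\notin E(G)$; such a switch sends $\cF_t$ to $\cF_{t-1}$. A careful count of valid forward and backward switches gives a ratio $|\cF_t|/|\cF_{t-1}|$ small enough that iterating above the typical value of $|E(G[A])|$ yields the desired tail bound. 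Choosing $C$ large enough to absorb the $\leq 2^n$ union bound over subsets of $R$ shows that, simultaneously for every cut, $e_{G^\cD}(A,R\setminus A)\geq 100|A|\log n/p$ with probability $1-o(1)$; the percolation step then contributes $n^{-100|A|}$ per cut and $o(1)$ in total.

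The main obstacle is arranging the switching count to be robust enough to survive the combined union bound. The backward count is controlled by $e_{G^\cD}(A, V\setminus A)^2$, which becomes large precisely when $A$ contains vertices of unusually high degree, so the raw switching ratio may not be geometric in that case. A likely remedy is to split on whether $A$ contains a vertex of sufficiently high degree (say exceeding $n^{1/4}$): if it does, then that vertex's contribution alone makes $e_{G^\cD}(A, R\setminus A)$ enormous and the percolation bound trivially small without needing any structural control on $E(G^\cD[A])$; otherwise, all degrees in $A$ are moderate and the standard switching count gives the required geometric decay.
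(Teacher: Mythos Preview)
Your overall plan---union bound over cuts $(A,B)$ of $R$, then show $e_{G^\cD}(A,B)$ is large enough that percolation leaves a crossing edge---matches the paper's. The divergence is in how you control $e_{G^\cD}(A,B)$: you go via an upper bound on $|E(G^\cD[A])|$, whereas the paper switches directly on $e(A,B)$. Your route has two genuine gaps.

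First, your proposed switches use an edge $uv\in E(G[V\setminus A])$ which may have endpoints outside $R$. After the switch $\{xy,uv\}\to\{xu,yv\}$, the vertex $x\in A\subseteq R$ loses its neighbour $y\in R$ and gains $u$; if $u\notin R$ then $d_{G[R]}(x)$ drops, so the conditioning event $\delta(G^\cD[R])\geq 200\log n/p$ is not preserved and the classes $\cF_t$ are ill-defined. Any switching here must stay entirely within $R$.

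Second, and more seriously, your remedy for the high-degree obstruction does not work. If $A$ contains a vertex $w$ with $d(w)>n^{1/4}$, you claim $e_{G^\cD}(A,R\setminus A)$ is automatically ``enormous''. But the conditioning only guarantees $d_{G[R]}(w)\geq 200\log n/p$; the remaining $n^{1/4}$ neighbours of $w$ may all lie outside $R$, and those inside $R$ may all lie in $A$ (you are in the regime $|A|>100\log n/p$). So one high-degree vertex contributes at most $200\log n/p$ to the cut inside $R$, whereas you need order $|A|\log n/p$ cross-edges to beat $\binom{|R|}{|A|}$ in the union bound.

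The paper sidesteps both issues by switching directly on the number of cross-edges. With $\cF_k$ the set of graphs satisfying $\delta(G[R])\geq 200\log n/p$ and $e_G(A,B)=k$, one increases $k$ by $2$ via a switch $\{uv,xy\}\to\{ux,vy\}$ with $uv\in E(G[A])$ and $xy\in E(G[B])$; all four endpoints lie in $R$, so $\delta(G[R])$ is unchanged. The key observation is that when $k\leq 4|A|\log n/p$, the minimum-degree hypothesis forces at least half the vertices of $A$ to have $\geq 100\log n/p$ neighbours inside $A$ (and likewise for $B$), giving $\Omega(|A|\,|B|\log^2 n/p^2)$ forward switches; the backward count is only $(k+2)^2$. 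This yields $\Pro[\cF_k]\leq \tfrac14\Pro[\cF_{k+2}]$ for all $k\leq 2K-2$ with $K=\lfloor 2|A|\log n/p\rfloor$, hence $\Pro[e(A,B)<K]\leq 2^{-K+1}$, and combined with $(1-p)^K$ for percolation this beats the union bound over cuts.
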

\begin{proof}
Let $N:=|R|$. 
Our proof strategy is to show that with high probability for every possible partition $(A,B)$ of $R$, 
there are edges between $A$ and $B$ in $G^\cD_p$.

Let $(A,B)$ be a partition of $R$ such that $\alpha:=|A|/N$ and $\alpha\leq 1/2$.
Let $K:=\lfloor 2\alpha N \log n/p\rfloor$. 
For every $0\leq k \leq 2K$, 
let $\cF_k$ be the set of graphs $G$ with degree sequence $\cD$ such that $\delta(G[R])\geq 200\log{n}/p$ and 
there are exactly $k$ edges between $A$ and $B$. 
In order to give an upper bound on $\Pro[\cF_k]$, 
we will consider switches between $\cFk$ and $\cFkkk$.

Let $G\in \cF_k$. 
We claim that there exist two subsets $A'\subseteq A$ and $B'\subseteq B$ with $|A'|\geq |A|/2$ and $|B'|\geq |B|/2$ 
such that for every 
$u\in A'$ (and every $y\in B'$), 
there are at least $100\log{n}/p$ edges from $u$ to $A$ (and from $y$ to $B$). 
We prove this claim for $A$, 
because the latter case is similar. 
Our assumption is that $0\leq k\leq 2K\leq 4\alpha N \log n/p$ and $\delta(G[R])\geq200\log n/p$. 
Let $A''\subseteq A$ be the subset that consists of all those vertices $u$ such that $d_{G[A]}(u)< 100 \log n/p$. 
If $|A''|\geq |A|/2 = {\alpha N}/2$, 
then 
\begin{align*}
	e(A,B)
	> \frac{\alpha N}{2} \cdot \frac{100\log n}{p} 
	> \frac{4 \alpha N \log n}{p}
	\geq 2K\geq k,
\end{align*}
which is a contradiction. 
Therefore $|A''| < |A|/2$, and setting $A' = A\sm A''$ we have $|A'| = |A\sm A''| \geq |A|/2$. Similarly, we set $B'=B\sm B''$.

Next we claim that the edges of $G[A]$ can be oriented in such a way so that 
every vertex in $A'$ has out-degree at least $48 \log{n}/p$ in $A$. 
To obtain such an orientation, 
start consistently orienting the edges of undirected cycles in $G[A]$ until the undirected graph induces a forest. 
Afterwards iteratively and consistently orient maximal undirected paths in this forest. 
If so, the out-degree of a vertex in $A$ is at least the in-degree minus $1$. 
Since $d_{G[A]}(u)\geq 100\log{n}/p$ for every vertex $u\in A'$,
the vertex $u$ has at least $50\log{n}/p-1\geq 48 \log n/p$ out-neighbours.
Similarly, one can also orient the edges of $G[B]$ in such a way that every vertex in $B'$ has out-degree at least $48\log{n}/p$ in $B$.

For each vertex in $u\in A'$, 
select a set $E(u)$ of exactly $48\log{n}/p$ directed edges from $u$ to a vertex in $A$, and analogously select $E(y)$, for each $y\in B'$. 
We will only count the (possible) $\{uv,xy\}$-switches 
with $u\in A'$, $y\in B'$, $uv\in E(u)$ and $yx\in E(y)$. 
For the switch to be valid, we insist on $ux,vy\notin E(G)$. 
Each edge $ab$ with $a\in A,b\in B$
can only invalidate switches of the form $\{av,by\}$ with $(a,v)\in E(a)$ and of the form $\{ua,xb\}$ with $(b,x)\in E(b)$;
that is, one edge $ab$ invalidates at most 
\begin{align*}
	|E(a)|\cdot (1-\alpha)N+|E(b)|\cdot \alpha N
	= \frac{48N\log n}{p}
\end{align*}
switches.
Hence in total the edges between $A$ and $B$ block at most
\begin{align*}
	2K \cdot \frac{48N\log n}{p} \leq \frac{192\alpha N^2 \log^2 n}{p^2}
\end{align*}
possible switches.
Recall that $|A'|\geq |A|/2$, $|B'|\geq |B|/2$.
Thus, by using $1-\alpha\geq 1/2$, there are at least 
$$
\frac{\alpha N}{2}\cdot \frac{48\log n}{p} \cdot \frac{(1-\alpha) N}{2}\cdot \frac{48\log n}{p}
- \frac{192\alpha N^2 \log^2 n}{p^2}
\geq \frac{96\alpha N^2 \log^2 n}{p^2}\;,
$$ 
switches that transform the graph $G$ into a graph in $\cFkkk$.
Since we only switch edges with both endpoints in $R$, 
the minimum degree in the graph induced by $R$ stays the same.

Consider a graph in $\cFkkk$.
Clearly,
there are at most $(k+2)^2$ switches that transform $G$ into a graph in $\cF_k$. 
Therefore, for every $0\leq k \leq 2K-2$, we conclude
$$
\Pro[\cF_k]
\leq \frac{p^2(k+2)^2}{96\alpha N^2 \log^2 n}
\cdot\Pro[\cFkkk] 
\leq \frac{1}{4}\cdot \Pro[\cFkkk] \;.
$$
We conclude that the probability there are less than $K$ edges in $G^\cD$ between $A$ and $B$ is small, namely
\begin{align}\label{eq:inter}
\sum_{k=0}^{K-1}\Pro[ \cFk]
\leq 2\cdot 4^{-K/2} (\Pro[\cF_{2K}]+\Pro[\cF_{2K-1}])
\leq 2^{-K+1}\;.
\end{align}
If $k\geq K$, 
then $\Pro[e(G_p[A,B])=0\mid \cFk]\leq (1-p)^{K}$. 
Therefore, provided $\delta(G[R])\geq200\log{n}/p$, 
the probability that $e(G^\cD_p[A,B])=0$ is at most $(1-p)^K+2^{-K+1}\leq e^{-\frac{3}{2}\alpha N\log{n}}$, where we used~\eqref{eq:inter} and that $1-p\leq e^{-p}$.

To conclude the proof of the lemma, 
we use a union bound over all partitions $(A,B)$ of $R$. 
Since for every $1\leq a \leq N/2$, 
there are $\binom{N}{a}\leq e^{a\log{N}}$ partitions with $|A|=a$. 
Conditional on $\delta(G[R])\geq200\log{n}/p$, 
the probability that $G^\cD_p[R]$ is disconnected is at most
$$
\sum_{a=1}^{N/2} \sum_{R=A\cup B \atop |A|=a} \Pro[e(G^\cD_p[A,B])=0]
\leq \sum_{a=1}^{N/2} e^{a\log{N}} e^{-\frac{3}{2}a\log{n}} 
\leq \sum_{a=1}^{N/2} e^{-\frac{1}{2}a \log{n}} =o(1)\;.
$$
\end{proof}

\subsection{Proof of Proposition~\ref{prop:very_rob}}

In this section we use the results from the two previous subsections to conclude the proof of Proposition~\ref{prop:very_rob}.
Let $p,\delta, \epsilon$ and $\avd$ be as in the statement.
Let $n$ be large enough in terms of these parameters. 
Let $V$ be a set of size $n$.
Let $\cD$ be a degree sequence on $V$ with $\Td\leq \avd n$. 
Recall that $S_1 = \{ u \in V \ : \ d(u)\geq \log^2{n} \}$,
$S_2 = \{ u \in V \ : \ d(u)\geq n^{1/3} \}$ and  $S_3  = \{ u \in V \ : \ d(u)\geq n^{4/5}\}$. 
Proposition~\ref{prop:very_rob} assumes that $\cD$ satisfies~\eqref{eq:De1}.
\medskip

\noindent
\emph{Case 1:} Suppose $\cD$ also satisfies \eqref{eq:De}, that is, $\sum_{u \in S_3}d(u)\geq \epsilon n/10$.
Let $s\geq 100$ be the smallest integer such that $\delta>2c^{s}$, 
where $c$ is the constant given by Corollary~\ref{cor:A} for our choice of $p$ and $\avd$. 
Set $\gamma_1:=\epsilon p/(20 s)$. 
If $|S_3| <  s$,  
then there exists a vertex $u\in S_3$ with $d(u)\geq 2\gamma_1 n/p$,
because $\cD$ satisfies \eqref{eq:De}.
This implies, by a simple application of Chernoff's inequality, 
that $G^\cD_p$ contains a star of order $\gamma_1 n$ with centre $u$,
in particular, $G^\cD_p$ contains a component of order at least $\gamma_1 n$.

Suppose now that $|S_3|\geq s$.
Let $\cA_1$ be the event that $G^\cD_p[S_3]$ is connected.
Then, by definition of $s$ and by Corollary~\ref{cor:A},
\begin{align}\label{eq:A1}
	\Pro[\overline{\cA_1}]\leq \frac{\delta}{2}.
\end{align}
Let $\cA_2$ be the event that every vertex in $S_2\sm S_3$ has a neighbour in $S_3$
and let $\cA_3$ be the event that $|N(S_2)\cap T|\leq \epsilon^2n$.
Then by Lemmas~\ref{lem:connectedtoS1} and~\ref{lem:N(S_2)T},
\begin{align*}
	\Pro[\overline{\cA_2}\cup \overline{\cA_3}]\leq 2n^{-1}.
\end{align*}
Let $\gamma_2:=p^2\epsilon^2n/3$.
We will show that $\Pro[L_1(G^\cD_p)> \gamma_2 n \mid \cA_2, \cA_3]\geq 1-\delta$.

If $|N(S_3)\cap T|\geq \epsilon^2n/2$,
then a straightforward application of Chernoff's inequality
combined with \eqref{eq:A1} shows that there is a component of order at least $p\epsilon^2n/3\geq \gamma_2 n$ in $G_p^\cD$
with probability at least $1-\delta$.

If $|N(S_3)\cap T|\leq \epsilon^2n/2$,
then $|N(S_2\sm S_3)\cap T|\geq \epsilon^2n/2$.
Let $F$ be a 
forest in $G^{\cD}$ such that $F$ contains $N(S_2\sm S_3)\cap T$, for every vertex $x_1\in N(S_2\sm S_3)\cap T$,
there is a path $x_1x_2x_3$ in $F$
such that $x_2\in S_2\sm S_3$ and $x_3\in S_3$, and among all such forests, $F$ contains as few as edges as possible.
To complete the case when $\cD$ satisfies \eqref{eq:De},
we will show that $G^\cD_p[S_3]\cup F_p$ contains a component of order at least $\gamma_2n$ with probability at least $1-\delta$. 
Consider a realisation of $G^{\cD}$ that satisfies $\cA_2 \cap \cA_3$.
Observe first that whether a certain edge in $F$ is present in $F_p$
changes the number of vertices in $N(S_2\sm S_3)\cap T$ that are connected via $F_p$ to $S_3$ by at most $n^{4/5}$.
Thus assuming $\cA_1$ holds,
a straightforward application of McDiarmid's inequality (Lemma~\ref{lem:McDineq})
shows that 
there is a component of order at least $p^2\cdot \epsilon^2n/3=\gamma_2 n$ in $G^\cD_p[S_3]\cup F_p$
with probability at least $1-n^{-1}$.
This together with \eqref{eq:A1}, completes the case when $\cD$ satisfies \eqref{eq:De}.

\medskip

\noindent
\emph{Case 2:} Now, suppose that $\cD$ does not satisfy Condition $(D^3_{\epsilon})$. 
Since it satisfies~\eqref{eq:De1}, by Lemma~\ref{lem:large min deg}, we obtain
\begin{align*}
	\Pro\left[\delta(G^\cD[S_1])\geq \frac{\epsilon}{16\avd}\log^2{n}\right]=1-o(1).
\end{align*}
Together with Lemma~\ref{lem:connec2} where $S_1$ plays the role of $R$, 
we conclude that 
\begin{align}\label{eq:GScon}
	\Pro[G^\cD_p[S_1]\ \text{\rm is connected}]=1-o(1).
\end{align}
In order to show that $G^\cD_p$ contains a giant component, 
we will show that $|N(S_1)\cap T|$ is large.
Let $K:=\lfloor \epsilon n / (128\avd)\rfloor$. 
For every $0\leq k\leq 2K$,  
let $\cF_k$ be the set of graphs with degree sequence $\cD$ such that $|N(S_1)\cap T|=k$.

Let $G\in \cF_k$. 
Using~\eqref{eq:sizeT} and $\delta(G)\geq 1$, 
there are at least $|T|-k\geq 2n/3-k \geq n/2$ choices for an edge $xy$ with $x\in T\sm N(S_1)$.
Observe that $d(y)\leq \log^2 n$, since $x\notin N(S_1)$. 
Also, since $x,y\notin S_1$ and $\cD$ does not satisfy \eqref{eq:De}, 
we claim that there are at most 
$$3 \avd \log^2 n +n^{4/5}\log^2{n}+\epsilon n/10\leq \frac{\epsilon n}{5}$$ 
edges incident to a neighbour of either $x$ or $y$. 
Indeed, the number of edges incident to a neighbour of $x$ is bounded by $3 \avd \log^2 n$, as $x$ has no 
neighbours inside $S_1$. 
Now, the neighbours of $y$ are classified either as the neighbours that belong to $S_3$ or those that do not.  
Since property \eqref{eq:De} does not hold, and 
there are at most $\epsilon n/10$ edges incident to any vertex in $S_3$, there are at most $\epsilon n /10$ edges 
incident to the first class of neighbours. 
Regarding the latter class of neighbours, there are at most $\log^2 n$ of them (as $y\not \in S_1$) 
and each has degree at most 
$n^{4/5}$. Thereby, there at most $n^{4/5}\cdot \log^2 n$ such edges. 
Hence our claim holds.

Let $uv$ be an edge such that $u\in S_1$, $v\notin N(y)$, and
either $v\notin T$ or if $v\in T$, then there exists a $u'\in S_1$ with $u'v\in E(G)$.  
Since $\sum_{u \in S_1}d(u)\geq \epsilon n$, 
there are at least $\epsilon n- k-{\epsilon}n/5\geq {\epsilon}n/{2}$ such edges. 
Hence, the total number of $\{xy,uv\}$-switches that transform $G$ into a graph 
in $\cF_{k+1}\cup \cF_{k+2}$ is at least $\epsilon n^2/4$
(we transform $G$ into a graph satisfying $\cFkkk$ if $v\in S_1$ and $y\in T\sm N(S_1)$).

If $G \in \cFkk\cup \cFkkk$, 
then there are at most $(k+2)\avd n$ switches that transform $G$ into a graph in $\cFk$. 
As before, for every $0\leq k\leq  2K-2$, this implies
\begin{align*}
	\Pro[\cF_k]\leq \frac{4(k+2)\avd n}{\epsilon n^2}\cdot (\Pro[\cFkk]+\Pro[\cFkkk]) \leq \frac{1}{4}\cdot \max\{\Pro[\cFkk],\Pro[\cFkkk]\} \;.
\end{align*}
Therefore, 
$$
\sum_{k=0}^{K-1}\Pro[ \cFk] \leq 2^{-K} \Pro[\cF_{2K}]\leq 2^{-K}=o(1)\;.
$$
Hence 
\begin{align*}
	\Pro[|N(S_1)\cap T|\geq \lfloor \epsilon n / (128\avd)\rfloor]=1-o(1)\;.
\end{align*}
Let $\gamma_3:=\epsilon p/(130\avd)$.
The Chernoff bound (Lemma~\ref{lem:Chernoff}) implies that
\begin{align*}
	\Pro[|N_{G^\cD_p}(S_1)\cap T|\geq p \epsilon n / (130\avd)]=1-o(1)\;.
\end{align*}
Together with \eqref{eq:GScon}, this implies $\Pro[L_1(G^\cD_p)\geq \gamma_3 n]\geq 1-\delta$.
Setting $\gamma:=\min\{\gamma_1,\gamma_2,\gamma_3\}$, we obtain 
$$
\Pro[L_1(G^\cD_p)\geq \gamma n]\geq 1-\delta\;.
$$

 \section{Sequences of degree sequences: proof of Theorem~\ref{thm:seq}} \label{sec:sequ}

Let $\fD=(\cD_n)_{n\geq 1}$ be a sequence of degree sequences with $\cD_n=(d_1^{(n)},\dots, d_n^{(n)})$. 
For the sake of simplicity, we write $\cD_n=(d_1,\dots, d_n)$ and $W(c):=W(c,\cD_n)$.
Set $$d_{c,n} := \max\left\{\frac{\sum_{i \in V\sm W(c)}d_i (d_i-1)}{\sum_{i \in V \sm W(c)} d_i},1\right\}.$$
We assume that $d_c:=\lim_{n \to \infty} d_{c,n}$ exists for every $c\geq 1$
and that $d$ is such that
$$
d= \sup_{c \geq 1} d_c=\sup_{c \geq 1} \lim_{n \to \infty} d_{c,n} \in [1,\infty) \;. 
$$
We define the critical probability as in~\eqref{eq:crit_prob} by
\begin{align*}
p_\cri (c,\cD_n) = \min \left\{ \frac{\sum_{i \in V\setminus W(c)} d_i}{\sum_{i\in V\setminus W(c)} d_i (d_i-1)},1\right\} = \frac{1}{d_{c,n}}\;.
\end{align*} 
We start with the proof of part (i)
and begin with a claim which states that for every large $c$
we can replace $1/d$ by $p_\cri(c,\cD_n)$ provided $n$ is large enough in terms of $c$.

\begin{claim}\label{cla:bounds}
For every $\epsilon \in (0,1/2)$, 
there exists $c_\epsilon$ such that for every $c\geq c_\epsilon$, there exists $n_{\epsilon,c}$ such that for every $n\geq n_{\epsilon,c}$, we have
\begin{itemize}
\item[-] if $p<(1-\epsilon)\frac{1}{d}$, then  $p<\left(1-\frac{\epsilon}{4}\right)p_\cri(c,\cD_n)$.
\item[-] if $p>(1+\epsilon)\frac{1}{d}$, then  $p>\left(1+\frac{\epsilon}{4}\right)p_\cri(c,\cD_n)$.
\end{itemize}
\end{claim}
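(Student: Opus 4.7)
The plan hinges on establishing that the sequence $(d_c)_{c\geq 1}$ is non-decreasing, so that $d_c \to d$ monotonically as $c\to\infty$, after which the rest is a straightforward algebraic manipulation.

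\medskip
\noindent\textbf{Step 1: monotonicity in $c$.} I would first show that $d_{c,n}$ is non-decreasing in $c$ for every fixed $n$. When the threshold is raised from $c$ to $c+1$, the set $V\sm W(c,\cD_n)$ gains exactly the vertices of degree $c$, each contributing $c$ to the denominator and $c(c-1)$ to the numerator of the ratio defining $d_{c,n}$. The new ratio is therefore a convex combination of the old ratio and $c-1$. Because every vertex contributing to the old ratio has degree at most $c-1$, one has the bound
\[
\frac{\sum_{i\in V\sm W(c,\cD_n)} d_i(d_i-1)}{\sum_{i\in V\sm W(c,\cD_n)} d_i} \leq c-2 < c-1,
\]
so the ratio is non-decreasing, and taking $\max$ with $1$ preserves this. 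Passing to the limit $n\to\infty$ yields that $d_c$ is non-decreasing in $c$; combined with $d = \sup_{c\geq 1} d_c < \infty$, this gives $d_c \nearrow d$ as $c\to\infty$.

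\medskip
\noindent\textbf{Step 2: choice of parameters.} Given $\epsilon \in (0,1/2)$, choose $c_\epsilon$ with $d_{c_\epsilon} \geq (1-\epsilon/16)\,d$. For every $c\geq c_\epsilon$, monotonicity then gives $(1-\epsilon/16)\,d \leq d_c \leq d$. For this $c$, choose $n_{\epsilon,c}$ large enough so that $|d_{c,n}-d_c|\leq \epsilon d/16$ for all $n\geq n_{\epsilon,c}$; consequently
\[
(1-\tfrac{\epsilon}{8})\,d \;\leq\; d_{c,n} \;\leq\; (1+\tfrac{\epsilon}{16})\,d.
\]

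\medskip
\noindent\textbf{Step 3: verification of the two implications.} Recall $p_\cri(c,\cD_n)=1/d_{c,n}$ (since $d_{c,n}\geq 1$, the minimum in the definition is attained at $1/d_{c,n}$). If $p<(1-\epsilon)/d$, then
\[
p\cdot d_{c,n} \;\leq\; (1-\epsilon)\bigl(1+\tfrac{\epsilon}{16}\bigr) \;\leq\; 1-\tfrac{15\epsilon}{16} \;<\; 1-\tfrac{\epsilon}{4},
\]
so $p < (1-\epsilon/4)p_\cri(c,\cD_n)$. If $p>(1+\epsilon)/d$, then, using $\epsilon<1/2$,
\[
p\cdot d_{c,n} \;\geq\; (1+\epsilon)\bigl(1-\tfrac{\epsilon}{8}\bigr) \;=\; 1+\tfrac{7\epsilon}{8}-\tfrac{\epsilon^2}{8} \;>\; 1+\tfrac{\epsilon}{4},
\]
so $p > (1+\epsilon/4)p_\cri(c,\cD_n)$.

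\medskip
The main (and really the only) non-trivial ingredient is the monotonicity of $d_{c,n}$ in $c$: without it, the hypothesis $d=\sup_c d_c$ would only supply a single $c^*$ with $d_{c^*}$ close to $d$, whereas the statement demands the conclusion to hold \emph{simultaneously} for all $c\geq c_\epsilon$. The convex-combination argument above is what upgrades a supremum to a monotone limit and therefore makes the quantifier structure work.
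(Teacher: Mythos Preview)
Your proof is correct and follows the same approach as the paper: establish that $d_{c,n}$ is non-decreasing in $c$, deduce $d_c \nearrow d$, then use this to sandwich $d_{c,n}$ near $d$ for large $c$ and $n$ and finish with elementary inequalities. The paper asserts the monotonicity of $d_{c,n}$ in one line without justification, whereas you supply the convex-combination argument; your constants ($\epsilon/16$, $\epsilon/8$) differ from the paper's ($\epsilon^2/2$, $\epsilon^2$), but both choices work and the structure of the argument is identical.
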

\begin{proof}
Note first that $d_{c,n}$ is non-decreasing with respect to $c$;
that is, $d_{c_2,n}\geq d_{c_1,n}$ for $c_2\geq c_1$.
This implies that $d_{c_2}=\lim_{n \to \infty} d_{c_2,n}\geq \lim_{n \to \infty} d_{c_1,n} =d_{c_1}$.
Hence $(d_c)_{c\geq 1}$ is a monotone non-decreasing
sequence and it converges to $d$.
Furthermore, $d<\infty$ by assumption.
Thus, for any $\epsilon >0$, there exists $c_{\epsilon}$ such that for any $c > c_{\epsilon}$, we have 
$$
(1 - \epsilon^2/2)d < d_c \leq d.
$$
In turn, given $c$, there exists $n_{\epsilon, c}$ such that for any $n > n_{\epsilon,c}$, we have 
\begin{equation*}
(1-\epsilon^2/2)d_c  < d_{c,n} < (1+\epsilon^2)d_c. 
\end{equation*}
Therefore, for every $c\geq c_\epsilon$ and every $n\geq n_{\epsilon, c}$ we directly obtain
\begin{equation}\label{eq:limit}  
(1-\epsilon^2)d  < d_{c,n} < (1+\epsilon^2)d. 
\end{equation}
Moreover, if $\epsilon<1/2$ and $p< (1-\epsilon) \frac{1}{d}$, then 
\begin{align*}
p <(1 - \epsilon)(1 + \epsilon^2)\frac{1}{d_{c,n}} < (1- \epsilon/4) \frac{1}{d_{c,n}}.
\end{align*} 
Similarly, if $\epsilon < 1/2$ and 
$p> (1 + \epsilon) \frac{1}{d}$, then 
\begin{align*}
p > (1+ \epsilon)(1-\epsilon^2) \frac{1}{d_{c,n}} > (1+\epsilon /4) \frac{1}{d_{c,n}}.
\end{align*}
\end{proof}
\noindent In what follows we will select $c_1$, $c_2$ and $\eta$ such that the hypotheses of Theorem~\ref{thm:thres} are satisfied.
By the strong uniform integrability assumption, for any $\epsilon>0$, 
there exists a $c'_{\epsilon}\in \N$ such that 
for every $c\geq c'_{\epsilon}$, there exists $n'_{\epsilon,c}$ such that for $n\geq n'_{\epsilon,c}$ we have
\begin{equation}\label{eq:limit2}  
\sum_{j\in W(c)} d_j\leq \frac{\epsilon^2}{c} \cdot n\;.
\end{equation}

We may assume that for fixed $\epsilon$ and all $c_1\leq c_2$,
we have $n_{\epsilon,c_1}\leq n_{\epsilon,c_2}$ and $n_{\epsilon,c_2}'\leq n_{\epsilon,c_2}$
as we simply can replace $n_{\epsilon,c_2}$ by $\max_{c'\leq c_2}\{n_{\epsilon,c'},n_{\epsilon,c'}'\}$.
We may also assume that $\epsilon < (64 d\avd)^{-1}$.
We choose $c_1:= \max\{c_\epsilon,c_{\epsilon}'\}$.
Suppose $c>c_1$ and $n\geq n_{\epsilon,c}$. 
Next we prove that $A_2(\epsilon/4, c_1,c)$ holds for a suitable $c$.
Note that this condition is only needed in Theorem~\ref{thm:thres}~(ii).
If $d_{c_1,n}\leq (1+\epsilon/5)$, then $p>(1+\epsilon)/d$ implies (by Claim~\ref{cla:bounds} and $c_1\geq c_\epsilon$) that
$$
p>\left(1+\frac{\epsilon}{4}\right)\frac{1}{d_{c_1,n}}>1,
$$
and there is nothing to prove. 
Thus, we may assume that $d_{c_1,n} > (1+\epsilon/5)$. 
Hence, $d_{c,n}= \frac{\sum_{i \in V\sm W(c)}d_i (d_i-1)}{\sum_{i \in V \sm W(c)} d_i}\leq c$ for any $c\geq c_1$ and $n$ sufficiently large in terms of $c$.
It follows that
\begin{eqnarray*}
\sum_{j \in W(c_1) \sm W(c)} d_j(d_j-1) 
&=& \sum_{j \in V\sm W(c)} d_j (d_j-1) - \sum_{j \in V\sm W(c_1)} d_j (d_j-1)  \\
& =& d_{c, n} \sum_{j \in V\sm W(c)}d_j    - d_{c_1,n} \sum_{j \in V\sm W(c_1)}d_j \\
&=& (d_{c,n} - d_{c_1,n}) \sum_{j \in V\sm W(c_1)}d_j+ d_{c,n}\sum_{j \in W(c_1)\sm W(c)}d_j \\
&\stackrel{\eqref{eq:limit2}}{\leq} &(d_{c,n} - d_{c_1,n}) \avd n +\epsilon^2dn\\
&\stackrel{\eqref{eq:limit}}{\leq}& 3\epsilon^2 d \avd n\;.
\end{eqnarray*}
This in turn implies that 
\begin{eqnarray*} 
\sum_{j \in W(c_1) \sm W(c)} d_j^2 
&=&\sum_{j \in W(c_1) \sm W(c)} d_j (d_j-1) +\sum_{j \in W(c_1) \sm W(c)} d_j \\
&\leq &3\epsilon^2 d \avd n + \sum_{j \in W(c_1)} d_j \\
&\stackrel{\eqref{eq:limit2}}{<} &4\epsilon^2  d \avd n  \\
&\leq &\frac{\epsilon/4}{4}\cdot n \;.
\end{eqnarray*}
Thus $A_2 (\epsilon/4,c_1,c)$ holds for all $c\geq c_1$ and $n \geq n_{\epsilon,c}$.
Note that $c_1$ only depends on $\epsilon$.

Let $\eta = \eta (\gamma, \epsilon/4, \avd)$ be as in Theorem~\ref{thm:thres}. 
Using again Condition~$(b)$, for $n\geq n_{\eta,c_2}$ we have
\begin{equation*}
\sum_{j\in W(c_2)} d_j \leq \frac{\eta}{c_2}\cdot n\;,
\end{equation*}
and thus $A_1(\eta,c_2)$ is satisfied (even if $d_{c_1,n}\leq (1+\epsilon/5)$).

Also let  $\rho=\rho(\epsilon/4,c_1)$ be the constant provided by Theorem~\ref{thm:thres}, which in this case only depends on $\epsilon$;
that is, we can choose $\gamma\leq \rho$.
Let $n$ be larger than $\max\{n_{\eta,c_2},n_{\epsilon,c_2}\}$
and the $n_0$ given by Theorem~\ref{thm:thres} for the parameters $\epsilon/4,\gamma, c_1,c_2,\avd$.
By Claim~\ref{cla:bounds}, we can apply Theorem~\ref{thm:thres} with $\epsilon/4$ to $\cD_n$ to conclude that
\begin{align*} 
&\mbox{if $p < (1- \epsilon) \frac{1}{d}$, then $\Pro [L_1 (G^{\cD_n}_p) > \gamma n] = \new{o}(1)$}, \\
&\mbox{if $p > (1+ \epsilon) \frac{1}{d}$, then $\Pro [L_1 (G^{\cD_n}_p) > \rho n] =1-  \new{o}(1)$}.
\end{align*} 
We proceed to the proof of Theorem~\ref{thm:seq}~(ii). 
Our aim is to apply Theorem~\ref{thm:rob}. 
Let us check that the hypotheses are satisfied.  
Given $\delta,p$ and $\avd$, let $K$ be the constant provided by Theorem~\ref{thm:rob}.

Suppose first that $d=\infty$. 
Since $\sup_{c \geq 1} d_c=\lim_{c\to \infty} d_c= d=\infty$, 
there exists $c_K$ such that for every $c\geq c_K$, we have 
$$
\lim_{n\to \infty} d_{c,n}= d_c >2K\;.
$$

Similarly, there exists $n_{K,c}$ such that $d_{c,n}\geq K$ for every $n\geq n_{K,c}$. 
For $c\geq \max\{c_{K},2\avd\}$ and $n\geq n_{K,c}$, we obtain
$$
 \sum_{j \in V\sm W(c)} d_j^2  \geq \sum_{j \in V\sm W(c)} d_j (d_j - 1) \geq K \sum_{j\in V\sm W(c)} d_j \geq \frac{K}{2}\cdot n\;,
$$
and so $A_2(K,0,c)$ does not hold.
Thus Theorem~\ref{thm:rob} leads to the desired conclusion.

Suppose now that $d< \infty$ but the sequence robustly fails the strong uniform integrability assumption. 
Let $c_0$ be such that $f(c)\geq K$ for every $c\geq c_0$. 
As $f(c)\to \infty$ as $c\to \infty$ such a $c_0$ exists. 
This in turn immediately implies that $A_1(K,c)$ does not hold provided $n$ is large enough.
Again, Theorem~\ref{thm:rob} leads to the desired conclusion and this completes the proof.
\medskip

We close this section with the following remark.
Suppose that $\lim_{n\to \infty} n_i/n =: \lambda_i < \infty$ for all $i\geq 1$,  that
$\sum_{i \geq 1}\lambda_i=1$, and that
$\sum_{i\geq 1}i\lambda_i <\infty$. 
Then 
$$ d = \frac{\sum_{i\geq 1} i(i-1)\lambda_i}{\sum_{i\geq 1} i\lambda_i}. $$
This recovers the results obtained by the first author~\cite{fountoulakis2007percolation}, Janson~\cite{janson2008percolation}, and Bollob\'as and Riordan~\cite{bollobas2015old}.

\section{Application: power-law degree distributions}\label {sec:PL}

Power law degree distributions have attracted considerable interest as they are one of the usual
characteristics of complex networks~\cite{albert2002statistical}.
Roughly speaking, in such degree sequences the 
fraction of vertices that have degree equal to $k$ (when $k$ is large) scales like $k^{-\gamma}$, for some 
$\gamma >0$. 

A variety of random graph models which exhibit a power law degree distribution have been introduced in the last 
15 years, mainly, in search for a sound model for complex networks. 
Among other properties, \emph{robustness} is a central property that has been considered in this context; 
that is, how robust a random network is if several of its edges or its vertices fail. 

In several random graph models with a power law degree distribution, it has been observed that if 
$\gamma>3$, 
then there exists a critical value $p_\cri$ (which is bounded away from $0$) for the appearance of a giant component in the bond percolation process.
However, 
if $\gamma \leq 3$, for any fixed $p>0$ (that is, independent of the order of the random graph), a giant 
component survives the random deletions with high probability. 
This behaviour has been observed in diverse random graph models that give rise to power-law degree
distributions such as the configuration model~(\cite{firstpassage2015+}, Corollary 2.5), the preferential attachment model~\cite{bollobasrior} and random graphs on the hyperbolic plane~\cite{hyper_perc2015}.

We now apply Theorem~\ref{thm:seq} in this context. This recovers a known result for power law sequences but also exemplifies how our results can be used for particular degree sequences.
Consider a sequence of degree sequences $(\cD_n)_{n \in \mathbb{N}}$,
where $\cD_n$ is a feasible degree sequence on $[n]$ and assume that it satisfies the following: for $k\geq 1$, let $n_k$ denote the number of vertices of degree $k$ in $\cD_n$, 
then there exist positive constants $\gamma,\lambda_1, \lambda_2, k_0>0$ such that for every $k \geq k_0$, we have 
$$
\frac{\lambda_1}{k^{\gamma}} \leq \frac{n_k}{n} \leq \frac{\lambda_2}{k^{\gamma}}\;.
$$
If so, we say that $\cD_n$ follows a power law distribution with exponent $\gamma$.

In this section we show that power law distributions, as defined here, show the same behaviour around $\gamma=3$. 
As before, we write $\cD_n=(d_1,\dots, d_n)$ and $W(c):=W(c,\cD_n)=\{ i \ : \  d_i \geq c \}$ for every $c\geq 1$.

Let $\cD_n$ follow a power law distribution with $\gamma>3$. Then, there exists $\lambda_2' >0$ such that for every $c_2 \geq k_0$, we have
$$\sum_{i\in W(c_2)} d_i 
= \sum_{k\geq c_2} k n_k
\leq \lambda_2 n \sum_{k\geq c_2} k^{1-\gamma} 
\leq \frac{\lambda_2'}{c_2^{\gamma -2}}\cdot n = \lambda_2' c_2^{3-\gamma}\cdot \frac{n}{c_2} \;,$$
and thus, $\cD_n$ satisfies $A_1(\lambda_2' c_2^{3-\gamma},c_2)$. 

Moreover, there exists $\lambda_2''>0$ such that for all $c_2\geq c_1 \geq k_0$, we have 
$$
\sum_{i\in W(c_1)\sm W(c_2)} d_i^2 =\sum_{k =c_1}^{c_2-1} k^2 n_k \leq \lambda_2 n \sum_{k =c_1}^{c_2-1} k^{2-\gamma} \leq  \lambda_2''c_1^{3-\gamma }\cdot n\;,
$$
that is, $\cD_n$ satisfies $A_2 (4\lambda_2''c_1^{3-\gamma},c_1,c_2)$. 

Provided that $c_1$ and $c_2$ are large enough and $\gamma>3$~(so the first parameters in conditions $A_1$ and $A_2$ are arbitrarily small), we can apply Theorem~\ref{thm:thres} to determine a quantity $p_\cri>0$ that is bounded away from 0, such that bond percolation in $G^{\cD_n}$ has a threshold at $p_\cri$.

Now, let $\cD_n$ follow a power law distribution with $2<\gamma<3$. Then, there exists $\lambda_1'>0$ such that for every $c\geq k_0$, we have
$$
 \sum_{i\in W(c)} d_i = \sum_{k\geq c} k n_k 
\geq \lambda_1 n \sum_{k\geq c} k^{1-\gamma} \geq \lambda_1' c^{2-\gamma} \cdot n= \lambda_1' c^{3-\gamma} \cdot \frac{n}{c}\;,$$
that is, $\cD_n$ does not satisfy $A_1(\lambda_1'c^{3-\gamma},c)$.

Provided that $c_1$ is large enough (so the first parameter in condition $A_1$ is arbitrarily large), we can apply Theorem~\ref{thm:rob} to show that bond percolation in $G^{\cD_n}$ does not have a positive threshold.

Note that if $\gamma\leq 2$, then the average degree of $G^{\cD_n}$ is unbounded and our results do not apply.

We finally state the ``limit'' version of the result for $\cD_n$ that follows a power law distribution. Suppose that there exists $c>0$ such that for all $k \geq 1$, we have 
$$
\lim_{n \to \infty} \frac{n_k}{n} = c k^{-\gamma}\;.
$$
If $\gamma>3$, then $d < \infty$, while if $\gamma<3$, then $d=\infty$.
So, Theorem~\ref{thm:seq} implies that in the former case we have $p_\cri = 1/d>0$, whereas in the latter case $p_\cri =0$. 

It is worth to stress that our results do not provide any meaningful information at $\gamma=3$.

\section{Concluding remarks}\label{sec:remarks}
We finish the paper with some remarks on our results.
\begin{itemize}
%

\item[1)] Theorem~\ref{thm:rob} provides a statement that holds only with probability at least $1-\delta$. The only part of its proof that does not hold with high probability is Corollary~\ref{cor:A}. 
This makes it easy to construct degree sequences that show that this cannot be improved.
For a given $\rho>0$, let us consider the following degree sequence on $n$ vertices (large enough in terms of $\rho$).
Let $a:=\lfloor 2/\rho \rfloor$ and suppose $a$ divides $n-a$.
Consider the degree sequence with $a$ vertices of degree 
$(n-a)/a + a -1$, and $n-a$ vertices of degree~$1$. 
This degree sequence is feasible and the only graph (up to isomorphism) with this degree sequence consists of a clique of size $a$ where each of its vertices is adjacent to  $n/a-1$ vertices of degree~$1$. 
With positive probability independently of $n$ all $\binom{a}{2}$ edges inside the clique of size $a$ fail to percolate in $G_p^\cD$. 
If so, $L_1(G_p^\cD)\leq \rho n/2$.
Thus for every $p\in [0,1)$, we have
$$
\Pro[L_1(G_p^\cD)<\rho n] >\delta(\rho,p)\;.
$$
Observe that these degree sequences also do not satisfy $A_1(K,c)$ for all $c\geq 2K$.

\item[2)] In~\cite{joos2016how}, a special role is given to vertices of degree $2$. 
However, by considering bond percolation this special situation never appears.
If most of the edges are incident to vertices of degree $2$ after the bond percolation, then $p\approx 1$ and almost all vertices have degree~$2$ already before the percolation.
In this case set $p_\cri:=1$. 
Let $W$ be the set of vertices with degree different from $2$. 
If $\sum_{i\in W} d_i = o(n)$, then 
$|N[W]|=o(n)$. 
For every $\epsilon>0$ and every $p< 1-\epsilon$, it follows that, 
$$
\sum_{i\in V\setminus N[W]} d_i(p(d_i-1)-1) = (n-|N[W]|)2(p-1)< - \epsilon n\;.
$$
Using the first part of Proposition~\ref{prop:explo} we obtain that $G^\cD_p$ has no giant component with high probability, and thus $p_\cri=1$. 
%
%

\item[3)] The previous remark is a particular case of the case $\Td/n\to \infty$. 
While it might seem natural that $p_\cri(\cD)\to 0$, here we provide an example for which $\Td/n\to \infty$ and $p_\cri$ is bounded away from $0$. 

Consider the degree sequence $\cD$ formed by $n^{2/3}$ vertices of degree $n^{2/3}$ and $n-n^{2/3}$ vertices of degree $1$. The critical condition in~\cite{joos2016how} shows that $G^\cD$ has a giant component with high probability. However, it is easy to see that, with high probability, $G^\cD_p$ has at least $(1-2p)n$ isolated vertices and thus we cannot expect to have a component of order larger than $2pn$. If $p\to 0$ (as $n\to \infty$), then $G^\cD_p$ does not have a giant component with high probability.
\end{itemize}

\bibliographystyle{amsplain}
\bibliography{percol_ref}

\vfill

\small
\vskip2mm plus 1fill
\noindent
Version \today{}
\bigbreak

\noindent
Nikolaos Fountoulakis\\
{\tt <n.fountoulakis@bham.ac.uk>}\\
School of Mathematics, University of Birmingham, Birmingham\\
United Kingdom\\

\noindent
Felix Joos\\
{\tt <joos@informatik.uni-heidelberg.de>}\\
Institute for Computer Science, Heidelberg University, Heidelberg\\
Germany\\

\noindent
Guillem Perarnau\\
{\tt <guillem.perarnau@upc.edu>}\\
IMTech, Universitat Polit\`ecnica de Catalunya, and Centre de Recerca Matem\`atica, Barcelona\\ 
 Spain

\end{document}